\documentclass[]{amsart}

\usepackage[utf8x]{inputenc}
\usepackage{lmodern}
\usepackage[T1]{fontenc}

\usepackage{mathrsfs}
\usepackage{amsmath,amssymb,amsthm}

\usepackage[pdftex,
			pdfauthor={James Maunder},
            pdftitle={Homotopy transfer for L-infinity structures and the BV-formalism},
            pdfsubject={},
            pdfkeywords={},
            pdfproducer={LaTeX with hyperref},
            pdfcreator={PDFLaTeX},
			pdfencoding=auto,
			psdextra
			]{hyperref}

\usepackage{tikz}
\usetikzlibrary{trees,matrix}

\usepackage[framestyle=fbox,
			framefit=yes,
			heightadjust=all,
			framearound=all
			]{floatrow}

\newtheorem{theorem}{Theorem}[section]

\newtheorem{proposition}[theorem]{Proposition}
\newtheorem{definition}[theorem]{Definition}
\newtheorem{lemma}[theorem]{Lemma}

\theoremstyle{definition}

\newtheorem{remark}[theorem]{Remark}


\newcommand{\Linf}{L$_\infty$}
\newcommand{\BVinf}{BV$_{\!\infty}$}
\newcommand{\MC}{\operatorname{MC}}

\newcommand{\Aut}{\operatorname{Aut}}
\newcommand{\Der}{\operatorname{Der}}
\newcommand{\im}{\operatorname{im}}
\newcommand{\homology}{\operatorname{H}}
\newcommand{\Dev}{\operatorname{D_{ev}}}
\newcommand{\Dod}{\operatorname{D_{od}}}
\newcommand{\Dodq}{\operatorname{D_{od,q}}}
\newcommand{\Hev}{\operatorname{H_{ev}}}
\newcommand{\Hod}{\operatorname{H_{od}}}
\newcommand{\Hodq}{\operatorname{H_{od,q}}}
\newcommand{\id}{\operatorname{id}}

\allowdisplaybreaks

\title{Homotopy transfer for L-infinity structures and the BV-formalism}

\author{James Maunder}

\address{School of Engineering, Computing, and Mathematics\\
	Oxford Brookes University\\
	Oxford OX33 1HX\\ UK}
	
\email{jmaunder@brookes.ac.uk}

\begin{document}

\begin{abstract}
Explicit constructions for the minimal models of general and unimodular \Linf-algebra structures are given using the BV-formalism of mathematical physics and the perturbative expansions of integrals. In particular, the general formulas for the minimal model of an \Linf-algebra structure are an instance of the Homotopy Transfer Theorem and we recover the known formulas of the structure in terms of sums over rooted trees discussing their relation to Feynman diagrams.
\end{abstract}

\maketitle

\section*{Introduction}

The Homotopy Transfer Theorem (HTT) describes how to use the data of two quasi-isomorphic chain complexes with an algebraic structure on one of them to construct an algebraic structure on the other one in such a way that these two structures are quasi-isomorphic. Of course, the notion of quasi-isomorphism here needs to be in the homotopy algebra sense, as the transferred structure is usually subject to some coherent system of higher homotopies. One reason that the HTT is so important is that it applies to algebras over many operads and results in explicit constructions, see \cite{loday_vallette,vallette_alg_htpy_operad} for more details. The HTT, therefore, unifies many objects appearing in algebra, geometry, topology, and mathematical physics. In this paper, the perspective taken allows the unification of minimal models of \Linf-algebra structures with Feynman diagrams appearing in the path integrals of the BV-formalism, similar to \cite{chuang_laz_feynman} where minimal models are explicitly constructed for algebras over the cobar-construction of a differential graded modular operad.

Building upon earlier work of Zinn-Justin \cite{zinn-justin}, Kallosh \cite{kallosh}, and deWit and van Holten \cite{dewit_vanholten}, the BV-formalism was originally introduced in mathematical physics as a tool to quantise gauge theories \cite{batalin_vilkovisky_gauge_algebra_quantization, batalin_vilkovisky_quantization_of_gauge_algebras}. The BV-formalism has since been utilised in several other fields of mathematics such as deformation quantisation \cite{cattaneo_felder_deformation}, manifold invariants \cite{cattaneo_mnev}, and link invariants \cite{iacovino}. This success could be attributed to the fruitfulness of the framework of the BV-formalism --- odd symplectic geometry, homological algebra, and path integrals can all be employed in a cohesive and powerful manner. In particular, the BV-formalism is adept at constructing invariants. These invariants may be the homotopy classes of algebraic structures, see for example the Homological Perturbation Lemma's appearance in \cite{doubek_jurco_pulmann, gwilliam_thesis} or unpublished work of Carlo Albert \cite{albert}.

The BRST symmetry \cite{BRS_higgs-kibble, BRS_renormalisation_of_gauge, BRS_renormalisation_of_higgs-kibble, tyutin} is central to the BV-formalism. Additional details and references can be found in \cite{henneaux, henneaux_titelboim, fuster_henneaux_maas, barnich_brandt_henneaux} for example.

Before proceeding, it should be stated that Khudaverdian is responsible for the initial geometric formulation of the BV-formalism \cite{khudaverdian_semidensities,khudaverdian_geometry_of_superspace,khudaverdian_BV_and_odd_symp_geo,khudaverdian_nersessian}. Schwarz also provided a more modern geometric formulation \cite{schwarz}. It should be stated that there exist many papers which consider the BV-formalism from different viewpoints, examples include \cite{costello,getzler_BV,kosmann-schwazbach_monterde,severa}.

The study of \Linf-algebras in the language of formal geometry is well-aligned with the formal geometry of the BV-formalism. Combining this observation with the earlier observation of transfer of algebraic structure gives the viewpoint of the current paper and \cite{braun_maunder}.

One final point to make is that the constructions herein only make partial use of the `quantum' structure apparent in the BV-formalism. Indeed, the transferred \Linf-algebra structures are at the `tree level truncation' (achieved by setting $\hbar=0$) and so all the higher genus structure is forgotten. This is a point that the author intends to revisit in future work.

The paper is organised as follows. Section~\ref{sec_linf_structres} recalls the construction of \Linf-algebra structures as solutions to the Maurer-Cartan equation in certain differential graded Lie algebras and discusses the notions of equivalence for these structures, including the statement of a theorem of Schlessinger and Stasheff (Theorem~\ref{thm_schlessinger_stasheff}) given originally in \cite{schlessinger_stasheff_deformation_rational}. Unimodular and quantum variants of \Linf-algebra structures and their notions of equivalence are also recalled in this section. Section~\ref{sec_doubling} recalls the doubling constructions, given in \cite{braun_laz_unimodular}, and describes how these constructions transfer \Linf-algebra structures and equivalence of such structures. Section~\ref{sec_halving} discusses the one-sided inverses of the aforementioned doubling constructions and when these constructions are compatible with algebraic structures and their equivalences. Section~\ref{sec_BV} serves as a brief resource for the relevant parts of the BV-formalism. In particular, this section establishes this paper's notation of strong deformation retracts, recalls the main results of \cite{braun_maunder} (Theorem~\ref{thm_braun_maunder}), and discusses path integrals in the particular case of `doubled' constructions. Section~\ref{sec_minimal_unimodular} and Section~\ref{sec_minimal_linf} describe how the constructions of the preceding parts of the paper can be employed to construct the minimal models of unimodular \Linf-algebras and \Linf-algebras, respectively. Finally, Section~\ref{sec_HTT} recasts the final theorem of Section~\ref{sec_minimal_linf} (Theorem~\ref{thm_minimal_model_for_general}) into the Homotopy Transfer Theorem (Theorem~\ref{thm_HTT_via_multi-linear}) by describing how the sums over Feynman diagrams transform into the familiar sum over rooted trees.

\section*{Notation and conventions}

Fix the real numbers, $\mathbb{R}$, as the base field. All unmarked tensors are assumed to be over $\mathbb{R}$, unless otherwise stated. We will work in the category of `super vector spaces' that is the category of differential $\mathbb{Z}/2\mathbb{Z}$-graded real vector spaces. Some of the results of this paper can be made to make sense in the $\mathbb{Z}$-graded case with suitable adaptations. However, given the techniques used in this paper, it makes most sense to work in the $\mathbb{Z}/2\mathbb{Z}$-graded setting. Naturally, one can consider a $\mathbb{Z}$-graded object as a $\mathbb{Z}/2\mathbb{Z}$-graded object by taking the grading modulo $2$.

The degree (or parity) of a homogeneous element $v$ of some super vector space will be denoted $|v|$. It is well established notation to refer to those elements of homogeneous degree $0$ as `even' and those of homogeneous degree $1$ as `odd'. We will adhere to this notation herein. Accordingly, the dimension of a super vector space is given by a pair $(m|n)$ where $m$ is the dimension (in the non-graded sense) of the of the subspace of even elements and $n$ is the dimension (in the non-graded sense) of the subspace of odd elements. The total dimension of a super vector space of dimension $(m|n)$ is given by the sum $m+n$. A super vector space is said to be finite-dimensional if, and only if, its total dimension is finite. The category of super vector spaces also has an internal hom-functor, making it a closed symmetric monoidal category.

Let the super vector space of dimension $(0|1)$ be denoted by $\Pi \mathbb{R}$. Given some super vector space $V$, let $\Pi V:=V\otimes \Pi \mathbb{R}$. This construction defines an endo-functor $\Pi$ called parity reversion.

Any algebra is always the appropriate notion in the category of super vector spaces. The expressions `differential (super)graded', `commutative differential graded algebra', and `differential graded Lie algebra' are abbreviated to `dg', `cdga', and `dgla', respectively.

A pseudo-compact super vector space is one given by a projective limit of super vector spaces of finite total dimension. Taking the inverse limit endows a pseudo-compact super vector space with a topology. Therefore, all linear maps of pseudo-compact super vector spaces are assumed to be continuous. In particular, the dual of a pseudo-compact super vector space is the topological dual. The algebraic and topological duals will both be denoted by a superscript asterisk. Using the correct notion of dual has the luxury of always having $(V^*)^*\cong V$ without any finiteness conditions. Similarly, it will always be the case that $(V \otimes V)^*\cong V^* \otimes V^*$ since the tensor product of two pseudo-compact super vector spaces will always be the completed tensor product. More details on pseudo-compact objects can be found in the literature \cite{gabriel,keller_yang,vandenbergh}. It is important to note that the functor $V\mapsto V^*$ establishes a symmetric monoidal equivalence between the category of pseudo-compact super vector spaces and the opposite category of super vector spaces. Meaning, a pseudo-compact dg algebra is equivalently a dg coalgebra, for instance. The main example of a pseudo-compact super vector space within this text is the completed symmetric algebra of a finite-dimensional super vector space $V$. In fact, this is an example of a pseudo-compact cdga (it is the dual of the cofree dg cocommutative coalgebra on $V$). Explicitly, the completed symmetric algebra is the pseudo-compact algebra $\hat{S}(V):=\prod_{i=0}^\infty S^i(V)$, that is as the direct product of symmetric tensor powers of the super vector space $V$. The symmetric algebra $S(V)$ is a subalgebra of $\hat{S}(V)$. 

A pronilpotent dgla (or cdga) is one given by a projective limit of nilpotent dglas (or cdgas). Nilpotent here is in the sense of `global' nilpotence: the descending central series stabilises at zero.

Let $\mathbb{R}[|\hbar|]$ denote the algebra of formal power series in the formal parameter $\hbar$ with real coefficients and $|\hbar|=2$. Given a super vector space $V$, let $V[|\hbar|]:= V\otimes \mathbb{R}[|\hbar|]$ be the $\mathbb{R} [|\hbar|]$-module of formal power series in $\hbar$ with values in $V$.

\section{Various L-infinity algebra structures}\label{sec_linf_structres}

From the formal geometric point of view, \Linf-algebra structures are given by elements of certain dglas satisfying the Maurer-Cartan (MC) equation. We will first establish our notation for MC elements before reviewing the definition of structures for \Linf-algebras, unimodular \Linf-algebras, and quantum \Linf-algebras, as well as the dglas which these structures belong to.

\subsection{Maurer-Cartan elements}

The purpose of this section is to establish the notation of the paper; it is very brief and contains nothing that will be unknown to an expert.

Let $\mathfrak{g}$ be a dgla with differential denoted by $d$ and bracket denoted by $[,]$. An element $\xi\in\mathfrak{g}$ is said to satisfy the MC equation if
\[
d(\xi) + \frac{1}{2}[\xi,\xi]=0.
\]
The set of all elements in $\mathfrak{g}$ satisfying the MC equation will be denoted by $\MC(\mathfrak{g})$.

\begin{remark}
In general, an element must be odd to satisfy the MC equation. In the case of an odd dgla, however, an element must be even to satisfy the MC equation.
\end{remark}

It is well-known that a morphism of dglas maps MC elements to MC elements and so MC defines a functor from the category of dglas to the category of sets.

Let $k[t,dt]$ denote the free unital cdga on the generators $t$ and $dt$ of degrees $0$ and $1$, respectively, such that $d(t)=dt$. For a pronilpotent dgla $\mathfrak{g}$, let $\mathfrak{g}[t,dt]:=\mathfrak{g}\hat{\otimes} k[t,dt]$.

Two MC elements $\xi,\nu \in \MC (\mathfrak{g})$ are said to be homotopic if there exists an element $h(t)\in \MC (\mathfrak{g}[t,dt])$ such that $h(0)=\xi$ and $h(1)=\nu$. The set of equivalence classes of MC elements in $\mathfrak{g}$ under the homotopy relation will be referred to as the Maurer-Cartan moduli space.

\begin{theorem}[Schlessinger and Stasheff]\label{thm_schlessinger_stasheff}
Two MC elements in a pronilpotent dgla are homotopic if, and only if, they are gauge equivalent.
\end{theorem}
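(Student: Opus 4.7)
My plan is to treat the two implications separately. Fix the convention that the infinitesimal gauge action of $x\in\mathfrak{g}^0$ on $\xi\in\MC(\mathfrak{g})$ is $\delta_x\xi=d(x)+[\xi,x]$, exponentiating via the Baker--Campbell--Hausdorff series to a finite action of the formal group $\exp(\mathfrak{g}^0)$, which is well-defined precisely because $\mathfrak{g}$ is pronilpotent.

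For the direction ``gauge equivalent $\Rightarrow$ homotopic'', suppose $\nu=\exp(x)\cdot\xi$. The explicit homotopy I would use is
\[
h(t):=\xi(t)+x\,dt\in\mathfrak{g}[t,dt],
\]
where $\xi(t)$ is the one-parameter family obtained from the gauge action of $\exp(tx)$ on $\xi$. Expanding the MC equation in $\mathfrak{g}[t,dt]$ and separating coefficients of $1$ and $dt$ (using $(dt)^2=0$) reduces the check to two conditions: that $\xi(t)\in\MC(\mathfrak{g})$ for every $t$, and the flow equation $\dot\xi(t)=d(x)+[\xi(t),x]$. Both hold by construction of $\xi(t)$, and since $\xi(0)=\xi$ and $\xi(1)=\nu$, this $h(t)$ exhibits the required homotopy.

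For the converse, start from any homotopy $h(t)=\xi(t)+x(t)\,dt\in\MC(\mathfrak{g}[t,dt])$. The same decomposition shows that $\xi(t)$ is a $t$-family of MC elements satisfying the time-dependent flow $\dot\xi(t)=d(x(t))+[\xi(t),x(t)]$. To integrate the time-dependent generator $x(t)$ to a finite gauge transformation I would solve the ODE
\[
\dot g(t)\,g(t)^{-1}=x(t),\qquad g(0)=1,
\]
by Picard iteration. Pronilpotence of $\mathfrak{g}^0$ is essential here: successive approximations differ only at progressively deeper stages of the descending central series, so the iteration converges in the profinite topology and defines $g(1)\in\exp(\mathfrak{g}^0)$. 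A standard uniqueness argument for the induced flow on $\MC(\mathfrak{g})$ then gives $g(1)\cdot\xi=\nu$.

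The main obstacle is this second direction: the passage from the infinitesimal flow $x(t)$ to a genuine group element $g(1)$, together with the verification that exponentiation intertwines the infinitesimal and finite actions, both rely crucially on the pronilpotent topology. An alternative presentation would argue inductively on the stages of the nilpotent tower of $\mathfrak{g}$, reducing at each step to a perturbative computation modulo a deeper ideal where the statement becomes essentially linear; either way, the pronilpotent hypothesis does the real work.
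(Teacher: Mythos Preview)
The paper does not give its own proof of this theorem: immediately after the statement it writes ``For a proof of the above theorem, see either the original paper \cite{schlessinger_stasheff_deformation_rational} or \cite{chuang_laz_feynman}.'' So there is nothing in the paper to compare against beyond a citation.

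Your sketch is the standard argument found in those references and is essentially correct. For the direction gauge $\Rightarrow$ homotopy, the path $h(t)=\exp(tx)\cdot\xi + x\,dt$ is exactly the usual one; the decomposition of the MC equation into the two conditions you state is right (up to sign conventions). For the converse, integrating the time-dependent generator $x(t)$ to a time-ordered exponential $g(t)$ via Picard iteration, with convergence controlled by the pronilpotent filtration, is again the standard move. The only place you are a little informal is the final line: to conclude $g(1)\cdot\xi=\nu$ you should explicitly note that both $t\mapsto g(t)\cdot\xi$ and $t\mapsto\xi(t)$ satisfy the same first-order ODE with the same initial value, and then invoke uniqueness of solutions, which in the pronilpotent setting follows by passing to the nilpotent quotients where the ODE is polynomial. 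This is routine, and your outline matches what the cited sources do.
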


For a proof of the above theorem, see either the original paper \cite{schlessinger_stasheff_deformation_rational} or \cite{chuang_laz_feynman}.

\subsection{L-infinity algebra structures}

Fix some super vector space $V$. Let us denote by $\Der (\hat{S}V^\ast )$ the space of derivations of $\hat{S}V^\ast$. Given the formal geometric setting we work in, a derivation may also be referred to as a vector field.

Equipped with the commutator bracket (denoted $[,]$), $\Der (\hat{S}V^\ast )$ becomes a graded Lie algebra. Moreover, the differential $d$ on $V$ induces a differential on $\Der (\hat{S}V^\ast )$ (via the Lie derivative) making it a dgla. Despite the abuse of notation, the induced differential on $\Der (\hat{S}V^\ast )$ will also be denoted by $d$.

Choosing a basis $\lbrace x_i \rbrace_{i\in I}$ in $V^\ast$, we may write any derivation $\xi \in \Der (\hat{S}V^\ast )$ as
\[
\xi = \sum_{i\in I} f_i^0 \partial_{x_i} + \sum_{i\in I} f_i^1 \partial_{x_i} + \cdots + \sum_{i\in I} f_i^k \partial_{x_i} + \cdots,
\]
where $f_i^k$ is a linear combination of monomials of order $k$ in the $x_i$s. If $\xi = \sum_{i\in I} f_i^n \partial_{x_i}$ for some fixed value $n$, then $\xi$ is said be a vector field of order $n$. It is clear that the order of a vector field is independent of the choice of basis. The space of all vector fields of order $\geq n$ will be denoted by $\Der_{\geq n} (\hat{S}V^\ast )$.

\begin{definition}\label{def_L-inf}
Let $V$ be a super vector space. An \Linf-algebra structure on $V$ is an element
\[
m\in \MC (\Der_{\geq 2} (\hat{S}\Pi V^\ast )).
\]
The pair $(V,m)$ will be referred to as an \Linf-algebra and the algebra $\hat{S}\Pi V^\ast$ with the differential $d+m$ will be referred to as its representing cdga.
\end{definition}

\begin{remark}
Letting $V$ have a vanishing differential in Definition~\ref{def_L-inf}, an \Linf-algebra structure on $V$ can equivalently be defined as a MC element belonging to the dgla $\Der_{\geq 1} (\hat{S}\Pi V^\ast )$. However, this latter dgla is not pronilpotent and so it is somewhat less convenient than using the pronilpotent dgla $\Der_{\geq 2} \left(\hat{S}\Pi V^\ast \right)$. For instance, Theorem~\ref{thm_schlessinger_stasheff} no longer applies when the dgla is not pronilpotent.
\end{remark}

\begin{definition}
Let $(V,m_V)$ and $(W,m_W)$ be two \Linf-algebras. An \Linf-morphism
\[
f\colon (V,m_V) \to (W,m_W)
\]
is an cdga morphism between their representing cdgas $f\colon \hat{S}\Pi W^\ast \to \hat{S}\Pi V^\ast$.
\end{definition}

\Linf-algebra structures and morphisms are traditionally defined using multi-linear maps. The viewpoint given above is equivalent to the traditional one. Indeed, any derivation $m\in \Der_{\geq 2} (\hat{S}\Pi V^\ast )$ can be written as
\[
m = m_2 + m_3 + \cdots,
\]
where $m_n$ is a derivation of order $n$. Therefore, $m$ is determined by the collection of maps
\[
m_n \colon \Pi V^\ast \to \left(\left(\Pi V^\ast \right)^{\otimes n} \right)_{S_n}.
\]
Moreover, there is an identification of $S_n$ coinvariants and $S_n$ invariants:
\[
i_n \colon \left(\left(\Pi V^\ast\right)^{\otimes n} \right)_{S_n} \to \left(\left(\Pi V^\ast\right)^{\otimes n} \right)^{S_n} \cong \left(\left(\Pi V^{\otimes n} \right)_{S_n} \right)^\ast,
\]
given by $i_n (x_1 \otimes \cdots \otimes x_n ) = \sum_{\sigma \in S_n} \sigma (x_1 \otimes \cdots \otimes x_k)$. Taking the dual of the composite $i_n\circ m_n$ defines a map
\[
\widetilde{m}_n \colon \left(\Pi V^{\otimes n}\right)_{S_n} \to \Pi V.
\]
The Maurer-Cartan equation for $m$ is equivalent to the appropriate conditions on the symmetric multi-linear maps $\widetilde{m}_n$; for example, $\widetilde{m}_2$ will satisfy the Jacobi identity up to a homotopy (with exact term depending upon $\widetilde{m}_3$).

Using a similar argument, one can show that an \Linf-morphism
\[
f\colon \hat{S}\Pi W^\ast \to \hat{S}\Pi V^\ast
\]
is equivalent to a collection of symmetric multi-linear maps
\[
\widetilde{f}_n \colon \left(\Pi V\right)^{\otimes n} \to \Pi W
\]
for $n=1,2,3,\cdots$ of even parity satisfying the appropriate conditions.

It is generally more convenient to think of \Linf-algebras and morphisms geometrically, rather than via multi-linear maps. However, the notions of \Linf-isomorphisms and \Linf-quasi-isomorphisms are more easily defined from the viewpoint of multi-linear maps. Moreover, the statement of Theorem~\ref{thm_HTT_via_multi-linear} discusses \Linf-algebra structures using multi-linear maps.

\begin{definition}
An \Linf-morphism $f\colon  \hat{S}\Pi W^\ast \to \hat{S}\Pi V^\ast$ is said to be an \Linf-(quasi)-isomorphism if its linear component $\tilde{f}_1 \colon \Pi V \to \Pi W$ is a (quasi)-isomorphism. If $V=W$ and $\widetilde{f}_1$ is the identity map of $V$, then $f$ is called a pointed \Linf-morphism.
\end{definition}

Note that a pointed \Linf-morphism is necessarily an \Linf-isomorphism. Moreover, the notion of pointed \Linf-isomorphism is equivalent to that of gauge equivalence of MC elements (see the appendix of \cite{braun_laz_unimodular}, for example).

\begin{definition}
Two \Linf-algebra structures supported on the same super vector space are said to be equivalent when the following equivalent conditions are satisfied:
\begin{itemize}
\item they are pointed \Linf-isomorphic;
\item they are homotopic;
\item they are gauge equivalent.
\end{itemize}
\end{definition}

\begin{remark}
Here, we are considering equivalence of \Linf-algebra structures supported on the same space. It is possible to consider equivalence of \Linf-algebra structures on different spaces using \Linf-quasi-isomorphisms. However, we wish primarily to make use of homotopy, forcing us to work with structures supported on the same space. Further, working with equivalence of structures supported on the same space is necessary for unimodular and quantum \Linf-algebra structures, because there is no useful notion of unimodular or quantum \Linf-morphism.
\end{remark}

\subsection{Unimodular L-infinity algebra structures}

Unimodular \Linf-algebra structures are defined as MC elements in a certain dgla. We will now define this dgla and recall some of the basic properties of unimodular \Linf-algebras. For more details see \cite{braun_laz_unimodular,granaker}.

\begin{definition}
Let $V$ be some super vector space. Denote by $\mathfrak{g}[V]$ the graded Lie algebra given by the semi-direct product
\[
\Der_{\geq 2} \left(\hat{S} V^\ast \right) \ltimes \Pi \hat{S}_{\geq 1} V^\ast,
\]
with bracket given by
\[
\left[(\xi,\Pi f),(\nu,\Pi g)\right] = \left( [\xi,\nu], \Pi \xi(g) + (-1)^{(|f|+1)\nu} \Pi \nu (f) \right),
\]
where $\xi,\nu\in \Der (\hat{S}V^\ast)$ and $f,g\in \hat{S}V^\ast$.
\end{definition}

In order to introduce a differential on $\mathfrak{g}[V]$ we must recall the definition of the divergence of a vector field.

\begin{definition}
Let $V$ be a super vector space with coordinates $\lbrace x_i \rbrace_{i\in I}$. The divergence of a vector field $\xi= f_i \partial_{x_i} \in \Der (\hat{S}V^\ast )$ is given by
\[
\nabla (\xi) = (-1)^{|f_i||x_i|} \partial_{x_i}f_i \in \hat{S}V^\ast.
\]
The general definition of divergence is given by extending linearly.
\end{definition}

\begin{proposition}
Let $V$ be a super vector space. Let $d_e\colon \mathfrak{g}[V] \to \mathfrak{g}[V]$ be defined by $d_e |_{\Pi \hat{S}_{\geq 1} V^\ast}=0$ and by
\[
d_e (\xi) = \frac{1}{2}\Pi \nabla (\xi) \in \Pi \hat{S}_{\geq 1}V^\ast
\]
for $\xi \in \Der_{\geq 2} (\hat{S}V^\ast )$. Then the graded Lie algebra $\mathfrak{g}[V]$ is a (pronilpotent) dgla with differential $d+d_e$, where $d$ is the induced differential from $V$.
\end{proposition}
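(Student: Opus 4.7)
The plan is to verify three properties in sequence: that $d+d_e$ squares to zero, that it is a graded derivation of the Lie bracket of $\mathfrak{g}[V]$, and that the resulting dgla is pronilpotent. Two identities for the divergence, both established by a direct coordinate calculation, will do the heavy lifting: the anticommutativity $d\circ\nabla = -\nabla\circ d$ on $\Der_{\geq 2}(\hat{S}V^\ast)$, and the Leibniz-type identity
\[
\nabla[\xi,\nu] = \xi(\nabla\nu) - (-1)^{|\xi||\nu|}\nu(\nabla\xi).
\]

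First I would verify $(d+d_e)^2=0$. Splitting according to the semidirect product decomposition, on the summand $\Pi\hat{S}_{\geq 1}V^\ast$ the only contribution is $d^2=0$. On the summand $\Der_{\geq 2}(\hat{S}V^\ast)$ the expansion $d^2 + dd_e + d_ed + d_e^2$ simplifies because $d^2=0$ and $d_e^2=0$ (the image of $d_e$ lies in the summand where $d_e$ vanishes), so it suffices to show that $dd_e + d_ed=0$ on this summand. This cross term equals $\tfrac{1}{2}\Pi(d(\nabla\xi) + \nabla(d\xi))$ up to the sign produced by commuting $d$ past $\Pi$, and so it vanishes by the first divergence identity.

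Next I would check that $d+d_e$ is a graded derivation of the bracket. That $d$ is a derivation is standard, as it acts by Lie derivative on both factors and the bracket is built from this action. For $d_e$, the bracket formula shows that $d_e[(\xi,\Pi f),(\nu,\Pi g)]$ contributes only $(0,\tfrac{1}{2}\Pi\nabla[\xi,\nu])$ since $d_e$ annihilates the module slot of the output. Computing $[d_e(\xi,\Pi f),(\nu,\Pi g)] + (-1)^{|(\xi,\Pi f)|}[(\xi,\Pi f),d_e(\nu,\Pi g)]$ using $d_e(\xi,\Pi f) = (0,\tfrac{1}{2}\Pi\nabla\xi)$ and $d_e(0,\Pi g) = 0$ then applying the semidirect product bracket formula yields a module-slot contribution of $(0,\tfrac{1}{2}\Pi(\xi(\nabla\nu) - (-1)^{|\xi||\nu|}\nu(\nabla\xi)))$. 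The two expressions agree by the second divergence identity above.

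Finally, pronilpotence is inherited: $\mathfrak{g}[V]$ carries compatible decreasing filtrations from its two factors, with $\Der_{\geq 2}(\hat{S}V^\ast)$ filtered by vector-field order and $\Pi\hat{S}_{\geq 1}V^\ast$ by polynomial degree. The bracket respects these filtrations (order $\geq p$ composed with order $\geq q$ lands in order $\geq p+q-1$, and similarly for the action on the module), and the differential $d+d_e$ preserves them since $d$ does not change order and $d_e$ sends an order-$n$ vector field to a degree-$n$ polynomial. Hence $\mathfrak{g}[V]$ is realised as a projective limit of finite-step nilpotent dglas. The main obstacle in the whole argument is the careful bookkeeping of Koszul signs in the semidirect product when checking the Leibniz rule for $d_e$; the underlying content is the elementary observation that divergence intertwines brackets of vector fields into the expected antisymmetric combination of Lie derivatives of divergences.
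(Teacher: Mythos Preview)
Your approach matches the paper's: the same divergence--bracket identity $\nabla[\xi,\nu]=\xi(\nabla\nu)-(-1)^{|\xi||\nu|}\nu(\nabla\xi)$ is the key input, and the paper likewise reduces $(d+d_e)^2=0$ to the cross term on the derivation summand before dismissing the Leibniz rule and pronilpotence as routine. One small correction: your first identity should read $d\circ\nabla=\nabla\circ d$ (commutativity, not anticommutativity), since $\nabla(d\xi)=\nabla[d,\xi]=d(\nabla\xi)$ once $\nabla(d)=0$; the minus sign that makes the cross term cancel comes entirely from commuting $d$ past $\Pi$, as in the paper's computation $d(\Pi\nabla\xi)=-\Pi d(\nabla\xi)$.
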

\begin{proof}
The equations $d^2=0$ and $d_e^2=0$ are clear. To see that $d\circ d_e + d_e \circ d=0$, first note that it clearly holds for $f\in \Pi \hat{S}_{\geq 1} V^\ast$. It remains to show the equation holds for $\xi \in \Der_{\geq 2} (\hat{S} V^\ast)$. Note that $d$ is traceless and so $\nabla (d)=0$. Thus,
\begin{align*}
2(d\circ d_e + d_e \circ d) (\xi) &= d (\Pi \nabla (\xi)) + \Pi \nabla ([d,\xi]) \\
&= - \Pi d(\nabla(\xi)) + \Pi d(\nabla (\xi)) - (-1)^{|\xi|} \Pi \xi (\nabla (d)) \\
&=0.
\end{align*}
Here we have used the formula $\nabla ([\xi,\nu])=\xi\nabla (\nu) - (-1)^{|\xi||\nu|}\nu \nabla (\xi)$. The rest of the proof is straightforward, yet long-winded, calculation.
\end{proof}

\begin{definition}\label{def_unimodular}
Let $V$ be a super vector space. A unimodular \Linf-algebra structure on $V$ is an element belonging to the set $\MC(\mathfrak{g}[\Pi V])$.
\end{definition}

Spelling out Definition~\ref{def_unimodular}, a unimodular \Linf-algebra structure on $V$ is a pair $(m,f)$, where $m\in \Der_{\geq 2} ( \hat{S}\Pi V^\ast )$ and $f\in \hat{S}_{\geq 1} \Pi V^\ast$, such that
\[
d(m)+\frac{1}{2}[m,m]=0 \quad\text{ and }\quad d(f) + \frac{1}{2}\nabla (m) + m (f) = 0.
\]

\begin{remark}\label{rem_preserving_volume}
Given that $V$ is finite-dimensional, we have the standard volume form  $\mu$ on $\Pi V^\ast$. The conditions in the definition of a unimodular \Linf-algebra structure $(m,f)$ on $V$ are equivalent to the condition that the derivation $m$ preserves the volume form $e^f \mu$.
\end{remark}

\begin{remark}
It is clear that one can recover the structure of an \Linf-algebra from a unimodular \Linf-algebra by simply forgetting the function $f$.
\end{remark}

\begin{remark}
Unimodular \Linf-algebras may equivalently be defined as algebras over the cobar construction of the wheeled closure of the operad $\mathcal{C}om$, see \cite{granaker,merkulov_wheeled_props}.
\end{remark}

There is no notion of a unimodular \Linf-algebra ((quasi)-iso)morphism, and so, unlike \Linf-algebra structures, we must consider equivalence of unimodular \Linf-algebra structures only when the structures are supported on the same super vector space.

\begin{definition}\label{def_equiv_unimodular_L-inf}
Two unimodular \Linf-algebra structures on $V$ are said to be equivalent if they are equivalent as MC elements in the dgla $\mathfrak{g}[\Pi V]$.
\end{definition}

Recall that $\mathfrak{g}[\Pi V]$ is a pronilpotent dgla and so by a Theorem~\ref{thm_schlessinger_stasheff} the two notions of gauge equivalence and homotopy for MC elements in $\mathfrak{g}[\Pi V]$ are equivalent. Therefore, we can make Definition~\ref{def_equiv_unimodular_L-inf} explicit in two ways:
\begin{enumerate}
\item a structure $(m,f)$ is gauge equivalent to a structure $(m^\prime,f^\prime)$ if there exists an even vector field $\xi \in \Der_{\geq 2} (\hat{S}\Pi V^\ast)$ and an odd function $g\in \hat{S}_{\geq 1} \Pi V^\ast$ such that
\[
m^\prime = e^{\xi} (d+m) e^{-\xi} - d \quad\text{ and }\quad f^\prime = e^{\xi}(f) + (d+m) (g).
\]
\item a structure $(m,f)$ is homotopic to a structure $(m^\prime,f^\prime)$ if there exists $h(t)\in \MC (\mathfrak{g}[\Pi V] [t,dt])$ such that $h(0)=(m,f)$ and $h(1)=(m^\prime,f^\prime)$.
\end{enumerate}
It is the latter notion, homotopy, that will be of most use of in this paper.

\begin{proposition}\label{prop_unimodular_structure_bijection}
Let $V$ be a super vector space with a split quasi-isomorphism $\homology (V) \to V$. The set of equivalence classes of unimodular \Linf-algebra structures on $\homology (V)$ and $V$ are in bijective correspondence.
\end{proposition}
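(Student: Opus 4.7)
By Definitions~\ref{def_unimodular} and~\ref{def_equiv_unimodular_L-inf}, the equivalence classes of unimodular \Linf-algebra structures on a super vector space $X$ are in bijection with the homotopy classes of MC elements in the pronilpotent dgla $\mathfrak{g}[\Pi X]$; by Theorem~\ref{thm_schlessinger_stasheff}, this coincides with the gauge equivalence quotient. The plan is to construct a quasi-isomorphism of pronilpotent dglas $\phi\colon \mathfrak{g}[\Pi \homology(V)] \to \mathfrak{g}[\Pi V]$ from the split quasi-isomorphism, and then invoke the standard Goldman--Millson type theorem that a quasi-isomorphism of pronilpotent dglas induces a bijection on MC moduli spaces.

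For the construction, upgrade the split quasi-isomorphism $i\colon \homology(V) \to V$ to a direct sum decomposition $V \cong \homology(V) \oplus C$ with $C$ acyclic. Dually one obtains a cdga surjection $\hat{S}\Pi V^\ast \to \hat{S}\Pi \homology(V)^\ast$, whose kernel is the ideal generated by $\Pi C^\ast$. Derivations in $\Der_{\geq 2} (\hat{S}\Pi \homology(V)^\ast)$ extend via the splitting to derivations of $\hat{S}\Pi V^\ast$ vanishing on $\Pi C^\ast$, yielding a Lie algebra injection into $\Der_{\geq 2}(\hat{S}\Pi V^\ast)$; a parallel injection handles the $\Pi \hat{S}_{\geq 1}$ component. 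Assembling these, the resulting map $\phi$ respects the semi-direct product bracket by inspection, and respects the divergence differential $d_e$ because a derivation supported on $\Pi \homology(V)^\ast$ has the same divergence whether computed in $\hat{S}\Pi V^\ast$ or in $\hat{S}\Pi \homology(V)^\ast$.

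The claim that $\phi$ is a quasi-isomorphism follows from a standard K\"unneth and filtration argument on symmetric powers, noting that in characteristic zero the completed symmetric algebra of an acyclic complex $C$ contributes trivially beyond arity zero. The classical theorem applied to $\phi$ then gives the required bijection, with lifting of MC elements carried out inductively along the pronilpotent filtration and uniqueness of the lift up to gauge transformation supplied by Theorem~\ref{thm_schlessinger_stasheff}. The main technical obstacle is verifying the compatibility of $\phi$ with both $d_e$ and the semi-direct product bracket: the assignment $X \mapsto \mathfrak{g}[\Pi X]$ is not manifestly functorial in $X$ alone, so the explicit splitting of $V$ must be used in an essential way to pin down $\phi$ and to trace through the cancellations that make the divergences on the two sides agree.
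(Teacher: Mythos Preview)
Your overall strategy---construct a dgla morphism from the splitting and then invoke a general invariance theorem for MC moduli---is the same as the paper's. However, there is a genuine gap in the final step: the ``standard Goldman--Millson type theorem'' you invoke, that a quasi-isomorphism of pronilpotent dglas induces a bijection on MC moduli spaces, is \emph{false} as stated. The paper makes this point explicitly in the remark immediately following the proposition: the two-step nilpotent Lie superalgebra on one odd generator $x$ with $dx=-\tfrac{1}{2}[x,x]$ is pronilpotent and acyclic, hence quasi-isomorphic to the zero dgla, yet its MC moduli set has two elements. So pronilpotence alone does not rescue the statement.

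What is actually needed---and what the paper does---is to equip both $\mathfrak{g}[\Pi V]$ and $\mathfrak{g}[\Pi \homology(V)]$ with the order filtration $F_p = \Der_{\geq p}(\hat{S}\Pi(-)^\ast)\ltimes \Pi\hat{S}_{\geq p}(-)^\ast$ and observe that the induced map is a \emph{filtered} quasi-isomorphism (a quasi-isomorphism on each associated graded piece). The result you want, that filtered quasi-isomorphisms induce bijections on MC moduli, is the correct general statement, and the paper cites Getzler and Lazarev--Markl for it. Your K\"unneth argument on symmetric powers is in fact already computing exactly the associated graded quasi-isomorphism, so the fix is small: replace ``quasi-isomorphism'' by ``filtered quasi-isomorphism'' throughout and cite the filtered version of the theorem. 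The compatibility checks with $d_e$ and the semi-direct bracket that you flag as the main obstacle are routine by comparison; the filtered hypothesis is the essential point you are missing.
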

\begin{proof}
Define filtrations of $\mathfrak{g}[\Pi V]$ and $\mathfrak{g}[\Pi \homology V]$ by setting
\[
F_p \mathfrak{g}[\Pi V] = \Der_{\geq p} (\hat{S}\Pi V^\ast ) \ltimes \Pi \hat{S}_p V^\ast
\]
for $p\geq 1$ and similarly for $\mathfrak{g}[\Pi \homology (V)]$. With these filtrations the split quasi-isomorphism $\homology (V) \to V$ induces a filtered quasi-isomorphism $\mathfrak{g}[\Pi V] \to \mathfrak{g}[\Pi \homology (V)]$. It is a well-known fact that filtered quasi-isomorphisms of dglas induce isomorphisms on MC moduli sets, see \cite{getzler} or using the Koszul duality of \cite{laz_markl,maunder_unbased_rat_homo}.
\end{proof}

\begin{remark}
It is possible for dglas to be quasi-isomorphic and have differing MC moduli sets. So it is necessary to use the finer notion of a filtered quasi-isomorphism in the proof of Proposition~\ref{prop_unimodular_structure_bijection}. For example, the dgla
\[
\left\lbrace x, dx : |x|=-1, dx=-\frac{1}{2} [x,x] \right\rbrace
\]
is clearly quasi-isomorphic to the zero dgla, but the two dglas have different MC moduli sets.
\end{remark}

A unimodular \Linf-algebra is called strictly unimodular if it is of the form $(m,0)$. Braun and Lazarev \cite{braun_laz_unimodular} showed that an \Linf-algebra $(V,m)$ admits a unimodular lift if, and only if, it is equivalent to the strictly unimodular \Linf-algebra structure.

\subsection{Quantum L-infinity algebra structures}

Similar to \Linf-algebra and unimodular \Linf-algebra structures, a quantum \Linf-algebra structure is given by a MC element in a certain dgla that we now work towards defining. Fix some odd symplectic vector space $V$ with basis $\lbrace x_i,\xi_j \rbrace_{i,j\in \lbrace 1,2,\dots, n\rbrace}$ for $V^*$, where $|x_i|=0$ and $|\xi_i|=1$.

\begin{definition}\label{def_BV_laplacian}
The (odd) Laplacian acts on formal functions $f\in \hat{S} V^\ast$ by
\[
\Delta(f)=\sum^{n}_{i=1} \partial_{x_i} \partial_{\xi_i} f.
\]
\end{definition}

Note that the definition of the Laplacian does not depend on the choice of basis.

\begin{remark}
The Laplacian defines a dg BV-algebra structure on $\hat{S} V^\ast$ with differential $d$ and BV-operator $\Delta$, that is $\Delta^2=0=\Delta (1)$ and $d\Delta + \Delta d =0$. In particular, $\hat{S} V^\ast$ has the structure of a dg odd Poisson algebra with differential $d+\Delta$ and odd bracket given by
\[
[x,y]=(-1)^{|x|}\Delta (xy)-(-1)^{|x|}\Delta (x)y - x\Delta(y).
\]
For more details regarding BV-algebras and odd Poisson (or Gerstenhaber) algebras see \cite{roger_gerstenhaber}. For more details on dg BV-algebras and their generalisation to \BVinf-algebras see \cite{braun_laz_homotopy_BV}, for example.
\end{remark}


\begin{definition}\label{def_h[V]}
For a monomial $f\in \hat{S} V^*$ of degree $n$, the element $f\hbar^g$ of $\hat{S} V^* [|\hbar |]$ is said to have weight $2g + n$. Let $\mathfrak{h}[V]$ be the subspace of $\hat{S} V^* [|\hbar |]$ containing those elements of weight $> 2$.
\end{definition}

Clearly the odd dgla structure of $\hat{S} V^\ast$ descends to one on $\mathfrak{h} [V]$, where the differential is given by $d+\hbar \Delta$. The dgla $\mathfrak{h}[V]$ is important for several reasons:
\begin{itemize}
\item $\mathfrak{h}[V]$ defines a subspace of $\hat{S} V^* [|\hbar |]$ where we can make sense of the integrals we will use later in the paper;
\item $\mathfrak{h}[V]$ is pronilpotent;
\item $\mathfrak{h}[V]$ is where quantum \Linf-algebra structures on $\Pi V$ are defined.
\end{itemize}

\begin{definition}\label{def_qLinf}
Let $V$ be a super vector space equipped with an odd non-degenerate symmetric bilinear form. A quantum \Linf-algebra structure on $V$ is an even element
\[
m = m_0 + \hbar m_1 + \hbar^2 m_2  + \dots \in \mathfrak{h}[\Pi V]
\]
that satisfies the MC equation
\[
(d+\hbar\Delta) m +\frac{1}{2} [m, m]=0.
\]
\end{definition}

\begin{remark}
Equivalently, quantum \Linf-algebras can be defined as algebras over the Feynman transform of the modular closure of the cyclic operad governing commutative algebras. The weight grading restriction is a reflection of the stability condition for modular operads, cf.~\cite{getzler_kapranov_modular_operads}.
\end{remark}

\begin{remark}\label{rem_recovering_defs}
Quantum \Linf-algebra structures can be thought of as `higher genus' versions of cyclic and unimodular \Linf-algebra structures in the following sense: suppose that $m_0 + \hbar m_1 + \dots$ defines a quantum \Linf-algebra structure on $V$. The canonical derivation associated to $m_0$ defines an odd cyclic \Linf-algebra on $V$ (see \cite{loday}) --- moreover forgetting the cyclic structure defines an \Linf-algebra. Further, the derivation associated with $m_0$ paired with the function $m_1$ defines a unimodular \Linf-algebra structure on $V$.
\end{remark}

\begin{remark}
The MC equation in $\mathfrak{h}[V]$ is equivalent to the quantum master equation (QME)
\[
(d+\hbar\Delta) e^\frac{m}{\hbar}=0 \Leftrightarrow (d+\hbar\Delta) m +\frac{1}{2} [m, m]=0.
\]
It is this correspondence that ensures the interplay between \Linf-algebra structures and the path integrals of the BV-formalism.
\end{remark}

Just as with unimodular \Linf-algebras, without the correct notion of quantum \Linf-algebra morphism, the equivalence of quantum \Linf-algebra structures is restricted to those structures supported on the same super vector space.

\begin{definition}
Two quantum \Linf-algebra structures supported on the same super vector space are said to be equivalent if they are equivalent as MC elements.
\end{definition}

As with the other \Linf-algebra structures, quantum \Linf-algebra structures are MC elements in a pronilpotent dgla and so two equivalent quantum \Linf-algebra structures are both gauge equivalent and homotopic (see Theorem~\ref{thm_schlessinger_stasheff}).

\section{The doubling constructions}\label{sec_doubling}

The doubling constructions (first given in \cite{braun_laz_unimodular}) embed the dglas where \Linf-algebra structures live into `richer' dglas, allowing us to make use of the BV-formalism. In this section we recall these constructions.

\subsection{Doubles of vector spaces}

Let $V$ be some super vector space. Consider the super vector space $V^\ast \oplus V$. This super vector space can be endowed with a non-degenerate anti-symmetric bilinear form $\langle,\rangle$ given by
\begin{itemize}
\item letting $V$ and $V^\ast$ be isotropic subspaces;
\item setting $\langle v^\ast , u \rangle=v^\ast (u)$, where $v^\ast \in V^\ast$ and $u\in V$;
\item extending in the obvious manner.
\end{itemize}
Similarly, the super vector space $V^\ast \oplus \Pi V$ can be endowed with a non-degenerate symmetric odd bilinear form $(,)$ by defining
\begin{itemize}
\item letting $\Pi V$ and $V^\ast$ be isotropic subspaces;
\item setting $\langle v^\ast , \Pi u \rangle=(-1)^{|v^\ast|} v^\ast (u)$, where $v^\ast \in V^\ast$ and $u\in V$;
\item extending in the obvious manner.
\end{itemize}

\begin{definition}
Let $V$ be a super vector space. The direct sum $V^\ast \oplus V$ is called the even double of $V$ and the direct sum $V^\ast \oplus \Pi V$ is called the odd double of $V$.
\end{definition}

\begin{proposition}
The spaces $V^\ast \oplus V$ and $V^\ast \oplus \Pi V$ are linear symplectic spaces (even and odd, respectively). Therefore, $\hat{S}(V^\ast \oplus V)$  and $\hat{S}(V^\ast \oplus \Pi V)$ have associated Poisson brackets, making the former a dgla and the latter an odd dgla. \qed
\end{proposition}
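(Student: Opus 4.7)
The statement has two halves: verifying the linear symplectic structures, and then deducing the (odd) dgla structures on the symmetric algebras.

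The plan is to first check non-degeneracy and the appropriate parity of symmetry of the two bilinear forms. Non-degeneracy in both cases reduces to the non-degeneracy of the evaluation pairing $V^\ast \otimes V \to \mathbb{R}$, which holds by the paper's convention of using the (topological) dual: any nonzero element of $V^\ast \oplus V$ (respectively $V^\ast \oplus \Pi V$) pairs nontrivially with something, since the two isotropic summands pair perfectly with each other. In the even case, anti-symmetry is forced by declaring $V$ and $V^\ast$ isotropic and extending the defining formula $\langle v^\ast, u \rangle = v^\ast(u)$ anti-symmetrically. In the odd case, the defining cross-pairing $(v^\ast,\Pi u) = (-1)^{|v^\ast|}v^\ast(u)$ already has odd total parity (thanks to the shift on $\Pi V$), and extending by graded symmetry yields the desired non-degenerate odd symmetric form.

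Next I would construct the Poisson brackets using the standard recipe: a non-degenerate bilinear form $\omega$ on a super vector space $W$ of parity $\epsilon$ extends uniquely to a biderivation of parity $\epsilon$ on $\hat{S}W$ that agrees with $\omega$ on linear generators. Applied to $W=V^\ast \oplus V$ with $\omega=\langle,\rangle$, this gives an even bracket on $\hat{S}(V^\ast \oplus V)$; applied to $W=V^\ast \oplus \Pi V$ with $\omega=(,)$, it gives an odd bracket on $\hat{S}(V^\ast \oplus \Pi V)$. The (super) Jacobi identity is automatic on triples of linear generators (any iterated bracket of generators lands in scalars, and by the derivation property these vanish), and then propagates to all of $\hat{S}W$ by the standard argument that a biderivation satisfying Jacobi on generators satisfies it everywhere.

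Finally, compatibility with the differential needs to be verified. The differential $d$ on $V$ extends canonically to a derivation of $\hat{S}(V^\ast \oplus V)$ and of $\hat{S}(V^\ast \oplus \Pi V)$ (acting on $V^\ast$ as the dual differential). Because $d$ preserves the symplectic forms by construction of the dual, it is a derivation of the induced Poisson brackets, producing a dgla in the even case and an odd dgla in the odd case. The main obstacle I anticipate is purely sign bookkeeping in the odd case: writing out the odd-parity graded Jacobi identity $\{x,\{y,z\}\}=\{\{x,y\},z\}+(-1)^{(|x|+1)(|y|+1)}\{y,\{x,z\}\}$ and checking it against the biderivation extension requires attention, but no new idea. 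Everything else is routine.
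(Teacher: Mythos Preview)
Your proposal is correct. The paper gives no proof at all for this proposition---it is marked with \qed\ and treated as a standard fact---so your sketch simply fills in the routine details (non-degeneracy from the evaluation pairing, biderivation extension of the form to a Poisson bracket, Jacobi on generators, compatibility with the induced differential) that the paper leaves implicit.
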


\begin{remark}
The bracket on $\hat{S} (V^\ast \oplus \Pi V)$ is sometimes called the antibracket.
\end{remark}

The constant functions define one-dimensional centres of the Lie algebras $\hat{S} (V^\ast \oplus V)$ and $\hat{S} (V^\ast \oplus \Pi V)$. The corresponding quotient algebras $\hat{S}_+ (V^\ast \oplus V)$ and $\hat{S}_+ (V^\ast \oplus \Pi V)$ can be identified with the Lie algebras of symplectic vector fields on $V^\ast \oplus V$ and $V^\ast \oplus \Pi V$, respectively. The Lie brackets on $\hat{S}_+ (V^\ast \oplus V)$ and $\hat{S}_+ (V^\ast \oplus \Pi V)$ are the commutators of the corresponding Hamiltonians and will be denoted by $(,)$ and $\lbrace,\rbrace$, respectively (see \cite{hamilton_laz_noncomm_geo} for more details).

The Lie bracket $(,)$ on $\hat{S}_+ (V^\ast \oplus V)$ is a derivation of the product: for $f,g,h \in \hat{S}_+ (V^\ast \oplus V)$ we have
\[
(f,gh)=(f,g)h + (-1)^{|f||g|} g (f,h).
\]
Similarly, the bracket $\lbrace,\rbrace$ on $\hat{S}_+ (V^\ast \oplus \Pi V)$ is an odd Lie bracket in that:
\begin{itemize}
\item $\lbrace,\rbrace$ is symmetric: for $f,g \in \hat{S}_+ (V^\ast \oplus \Pi V)$ we have
\[
\lbrace f,g \rbrace = (-1)^{|f||g|} \lbrace g,f \rbrace;
\]
\item $\lbrace,\rbrace$ satisfies the odd Jacobi identity: for $f,g,h \in \hat{S}_+ (V^\ast \oplus \Pi V)$ we have
\[
\lbrace f,\lbrace g,h \rbrace \rbrace = (-1)^{|f|+1} \lbrace \lbrace f,g \rbrace, h \rbrace + (-1)^{(|f|+1)(|g|+1)} \lbrace g,\lbrace f,h \rbrace \rbrace.
\]
\end{itemize}
Moreover, the bracket $\lbrace,\rbrace$ is an odd derivation of the product: for $f,g,h \in \hat{S}_+ (V^\ast \oplus \Pi V)$ we have
\[
\lbrace f,gh \rbrace =\lbrace f,g \rbrace h + (-1)^{(|f|+1)|g|} g \lbrace f,h \rbrace.
\]
Explicitly, given a basis $x_1,\cdots, x_n$ in $V$, the Poisson brackets for linear functions are given by
\[
(x_i^\ast,x_j)=\delta_{ij}=\lbrace x_i^\ast ,x_j \rbrace.
\]

\subsection{The doubling constructions}

The doubling constructions associate to any formal vector field on $V$ a Hamiltonian on each of the even and odd doubles of $V$. Before recalling their definitions, note that any derivation of $\hat{S}V^\ast$ is uniquely determined by its value on $V^\ast$. Therefore, we have an isomorphism
\[
\Der \left(\hat{S} V^\ast \right) \cong \hat{S} V^\ast \otimes V
\]
of super vector spaces. The obvious embedding $V\hookrightarrow \hat{S}V$ therefore induces a map
\[
\Dev \colon \Der (\hat{S}V^\ast)\cong \hat{S} V^\ast \otimes V \hookrightarrow \hat{S}V^\ast \otimes \hat{S}_+ V \subset \hat{S}_+ (V^\ast \oplus V).
\]
Similarly, a map
\[
\Dod \colon \Der (\hat{S}V^\ast)\cong \hat{S} V^\ast \otimes V \hookrightarrow \hat{S}V^\ast \otimes \hat{S}_+ \Pi V \subset \hat{S}_+ (V^\ast \oplus \Pi V)
\]
is given by the formula
\[
\hat{S}V^\ast \otimes V \ni f\otimes v \mapsto (-1)^{|f|} f \otimes \Pi v \in \hat{S}V^\ast \otimes \hat{S}_+ \Pi V \subset \hat{S}_+ (V^\ast \oplus \Pi V).
\]

\begin{definition}
$\Dev$ and $\Dod$ are called the even and odd doubling maps, respectively.
\end{definition}

The proof of the following Theorem can be found in the original paper \cite{braun_laz_unimodular}.

\begin{theorem}[Braun and Lazarev]
The map $\Dev$ is a morphism of dglas, that realises $\Der (\hat{S}V^\ast )$ as a sub-dgla in the dgla of formal Hamiltonians on $V^\ast \oplus V$. Moreover, the map $\Dod$ is an odd morphism of dglas, that realises $\Der (\hat{S}V^\ast )$ as a sub-dgla in the odd dgla of formal Hamiltonians on $V^\ast \oplus \Pi V$
\end{theorem}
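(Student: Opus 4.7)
The plan is to prove both statements in parallel by reducing the verification to Leibniz-rule calculations for the Poisson bracket on the double. First, I would exploit the identification $\Der (\hat{S}V^\ast )\cong \hat{S}V^\ast \otimes V$ recalled just before the theorem, under which $\Dev$ and $\Dod$ become (up to the sign twist $(-1)^{|f|}$ in the odd case) the obvious inclusions into $\hat{S}_+ (V^\ast \oplus V)$ and $\hat{S}_+ (V^\ast \oplus \Pi V)$. Both maps are then manifestly injective, so what remains is to check compatibility with the brackets and with the differentials.

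For the bracket compatibility of $\Dev$: fix a basis $\{x_i\}$ of $V$ and dual basis $\{x_i^\ast\}$ of $V^\ast$, so that $(x_i^\ast, x_j) = \delta_{ij}$ and both $V$ and $V^\ast$ are isotropic. Writing $\xi = \sum_i f_i \partial_{x_i^\ast}$ and $\eta = \sum_j g_j \partial_{x_j^\ast}$ with $f_i, g_j \in \hat{S}V^\ast$, repeated application of the Leibniz rule for $(,)$ collapses $(\Dev \xi, \Dev \eta) = \sum_{i,j}(f_i x_i, g_j x_j)$ to a sum of terms of the form $f_i (\partial_{x_i^\ast} g_j) x_j$ and $g_j (\partial_{x_j^\ast} f_i) x_i$. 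After relabeling indices this is precisely $\Dev [\xi, \eta]$ computed via the usual derivation commutator $[\xi, \eta] = \xi\eta - (-1)^{|\xi||\eta|}\eta\xi$ acting on generators.

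For the differential: the differential on $\Der (\hat{S}V^\ast )$ is the Lie derivative $[d, -]$, which under the identification with $\hat{S}V^\ast \otimes V$ has the form $d(f) \otimes v + (-1)^{|f|} f \otimes d(v)$; this is exactly the restriction of the ambient differential on $\hat{S}_+ (V^\ast \oplus V)$ to the image of $\Dev$ (since $d$ on $V^\ast$ is the adjoint of $d$ on $V$, so the evaluation pairing used to identify the two sides is $d$-invariant). Hence $\Dev$ is a chain map. The parallel statement for $\Dod$ follows from the same computations with parity reversion inserted: the sign $(-1)^{|f|}$ built into $\Dod$ is precisely what is needed to turn the even Lie morphism into an odd one, so that $\lbrace \Dod \xi, \Dod \eta \rbrace = \Dod [\xi, \eta]$ rather than a sign-corrected variant.

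The main obstacle will be the sign bookkeeping, especially in the odd case where the twist $(-1)^{|f|}$, the parity reversion $\Pi$, and the odd parity of the antibracket $\lbrace , \rbrace$ all interact. Each local identity ultimately reduces to the defining bracket $(x_i^\ast, x_j) = \delta_{ij}$ together with Leibniz, so the only genuinely delicate point is organising the Koszul signs so that the derivation commutator on the left matches the (anti)bracket on the right with the correct — rather than opposite — sign.
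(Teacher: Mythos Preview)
The paper does not actually prove this theorem: it simply refers the reader to the original paper of Braun and Lazarev with the sentence ``The proof of the following Theorem can be found in the original paper \cite{braun_laz_unimodular}.'' So there is no in-text argument to compare yours against.

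Your direct verification --- identify $\Der(\hat S V^\ast)$ with $\hat S V^\ast \otimes V$, note that $\Dev$ and $\Dod$ are then visibly injective, and reduce the bracket compatibility to the Leibniz rule for the Poisson/antibracket on elementary tensors $f\otimes v$ using the relations $(x_i^\ast,x_j)=\delta_{ij}$ and isotropy of $V,V^\ast$ --- is the standard route and is correct in outline. The differential compatibility is likewise straightforward once one observes that the differential on $\hat S_+(V^\ast\oplus V)$ restricts to the Lie derivative $[d,-]$ on the image. One small notational slip: your basis conventions swap the roles of $V$ and $V^\ast$ relative to the paper (the paper takes $\{x_i\}$ in $V$ with duals $x_i^\ast\in V^\ast$, and derivations of $\hat S V^\ast$ act by $\partial_{x_i^\ast}$), but this is harmless. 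As you yourself flag, the only real content is the Koszul sign bookkeeping in the odd case, where the twist $(-1)^{|f|}$ in the definition of $\Dod$ compensates for the parity shift; you have not actually carried this out, so strictly speaking what you have written is a proof plan rather than a proof, but the plan is sound.
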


A derivation $X_f$ of $\hat{S} \left(V^\ast \oplus V \right)$ is associated to every function $f\in \hat{S} \left(V^\ast \oplus V \right)$. The derivation $X_f$ is known as the Hamiltonian vector field associated to $f$. Explicitly, the action of $X_f$ on a function $g\in \hat{S} (V^\ast \oplus V)$ is given by $X_f (g) = (-1)^{|f|} (f,g)$. Clearly the constant functions act by zero. This association is well-known to give a Lie algebra morphism
\[
\phi_{\text{ev}} \colon \hat{S}_+ \left(V^\ast \oplus V \right) \to \Der \left( \hat{S} \left( V^\ast \oplus V \right) \right).
\]
Similarly, there is an odd Lie algebra morphism
\[
\phi_{\text{od}} \colon \hat{S}_+ \left(V^\ast \oplus \Pi V \right) \to \Der \left( \hat{S} \left( V^\ast \oplus \Pi V \right) \right).
\]
Pre-composing the above morphisms $\phi_{\text{ev}}$ and $\phi_{\text{od}}$ with the doubling constructions we obtain the following two morphisms of Lie algebras:
\[
\Der \left(\hat{S}V^\ast\right) \to \Der \left( \hat{S} \left( V^\ast \oplus V \right)\right) \quad\text{ and }\quad \Der \left(\hat{S}V^\ast\right) \to \Der \left( \hat{S} \left( V^\ast \oplus \Pi V \right)\right).
\]
By an abuse of notation, we will often refer to the maps that realise $\Der (\hat{S} V^\ast )$ as a Lie subalgebra in both $\Der ( \hat{S} ( V^\ast \oplus V ))$ and $\Der  \hat{S} ( V^\ast \oplus \Pi V ))$ as the even and odd doubling maps. Moreover, we will continue to use the notation $\Dev$ and $\Dod$ for these maps and the already established doubling maps. Context should make it apparent which doubling map is used.

Since \Linf-algebra structures are given by MC elements and the doubling maps are dgla morphisms, we make the following definition.

\begin{definition}
Let $m\in \Der_{\geq 2} (\hat{S} \Pi V^\ast )$ be an \Linf-algebra structure on the super vector space $V$.
\begin{itemize}
\item The even double of $(V,m)$ is the \Linf-algebra structure
\[
\Dev (m) \in \Der_{\geq 2} \left( \hat{S} \Pi \left(V^\ast \oplus V \right)\right).
\]
\item The odd double of $(V,m)$ is the \Linf-algebra structure
\[
\Dod (m) \in \Der_{\geq 2} \left( \hat{S} \Pi \left(V^\ast \oplus \Pi V \right)\right).
\]
\end{itemize}
\end{definition}

The even and odd doubles of an \Linf-algebra structure are naturally cyclic \Linf-algebras (even and odd, respectively). In fact, the even double of an \Linf-algebra structure is actually a strictly unimodular \Linf-algebra structure. Before we prove this, we need the following preliminary result regarding the divergence of the even and odd doubles.

\begin{proposition}\label{prop_div_and_doubles}
For $\xi \in \Der \left( \hat{S} V^\ast \right)$, $\nabla (X_{\Dev (\xi)} )=0$ and $\nabla (X_{\Dod (\xi)} )=2\nabla (\xi)$.
\end{proposition}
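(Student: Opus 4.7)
The plan is a direct coordinate computation. Fix a basis $\{x_i\}_{i=1}^n$ of $V$ with dual basis $\{x_i^*\}$ of $V^*$, and write $\xi = \sum_i f_i \partial_{x_i^*}$ with $f_i \in \hat{S}V^*$, so that $\nabla(\xi) = \sum_i (-1)^{|f_i||x_i^*|}\partial_{x_i^*}(f_i)$. The strategy in each case is to write down $\Dev(\xi)$, respectively $\Dod(\xi)$, as an explicit Hamiltonian, convert it into the associated Hamiltonian vector field on the double using the Poisson (or anti-)bracket, and then take the divergence of the resulting derivation.

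For the even case, $\Dev(\xi) = \sum_i f_i x_i \in \hat{S}(V^*\oplus V)$. Using the derivation property of the bracket together with $(x_j^*, x_i) = \delta_{ij}$ and the isotropy of $V^*$ (so that brackets internal to $\hat{S}V^*$ vanish), one computes $X_{\Dev(\xi)}$ explicitly: on the $V^*$-coordinates it recovers $\xi$ itself, contributing $\sum_i \pm \partial_{x_i^*}(f_i)$ to the divergence, while on the $V$-coordinates it contributes terms $\pm \partial_{x_k^*}(f_i) \cdot x_i$ acting via $\partial_{x_k}$, whose divergence pairs $k$ with $i$ and gives the exact opposite sign. The two contributions cancel, yielding $\nabla(X_{\Dev(\xi)}) = 0$. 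Conceptually this is the familiar statement that a Hamiltonian vector field on an even symplectic space preserves the coordinate (Liouville) volume form.

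For the odd case, $\Dod(\xi) = \sum_i (-1)^{|f_i|} f_i \Pi x_i \in \hat{S}(V^*\oplus \Pi V)$, and one runs the analogous computation with the antibracket $\{,\}$ in place of $(,)$. The key difference is that $\{,\}$ is an odd derivation satisfying odd symmetry and the extra sign $(-1)^{|f_i|}$ sits in the definition of $\Dod$; when one re-derives the two contributions to the divergence of $X_{\Dod(\xi)}$, the interplay of these signs with the parity shift $\Pi$ reverses the relative sign that produced cancellation in the even case, so the two contributions now coincide and add, producing $2\nabla(\xi)$. A cleaner bookkeeping device is to observe once and for all that on the odd double one has $\nabla(X_H) = \pm 2\Delta(H)$ for every $H$, and then evaluate $\Delta(\Dod(\xi)) = \sum_{i,j}(-1)^{|f_i|}\partial_{x_j^*}\partial_{\xi_j}(f_i \xi_i)$, where only the $j=i$ terms survive because $f_i$ does not depend on any $\xi$, yielding exactly $\pm \nabla(\xi)$ up to the same sign.

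The only genuine obstacle is careful bookkeeping of Koszul signs, arising from the parities of the $f_i$, from $|\Pi x_i| = |x_i|+1$, and from the odd-symmetry and odd-derivation properties of $\{,\}$. No deep input beyond the definitions of $\nabla$, of $\Dev$, $\Dod$, and of the Hamiltonian vector field $X_H(g) = (-1)^{|H|}(H,g)$ is required; the factor of $2$ in the odd case is precisely the failure of the sign cancellation that forces $\nabla(X_{\Dev(\xi)})$ to vanish in the even case.
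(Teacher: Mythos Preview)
Your proposal is correct and follows essentially the same route as the paper: a direct coordinate computation in which one writes $\xi = f\partial_{x_i}$, identifies the corresponding Hamiltonian on the double, expands the Hamiltonian vector field via the (anti)bracket, and observes that the two contributions to the divergence cancel in the even case and add in the odd case. The paper's proof is terser (it treats a single summand $f\partial_{x_i}$ and says the odd case is ``almost analogous''), while you additionally note the conceptual interpretation via the Liouville volume and the alternative bookkeeping through the identity $\nabla(X_H)=\pm 2\Delta(H)$, but these are embellishments rather than a different argument.
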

\begin{proof}
Choose some basis $x_1,\cdots,x_n$ in $V^\ast$ with dual basis $x_1^\ast,\cdots x_n^\ast$ in $V$. If $\xi=f\partial_{x_i}$ for some $i$, then
\begin{align*}
\nabla (X_{\Dev (\xi)} ) &= \nabla (X_{fx_i^\ast}) \\
	&= \nabla \left((fx_i^\ast,-)\right) \\
	&= \nabla \left(\sum_{j} \left(\partial_{x_j^\ast}(fx_i^\ast) \partial_{x_j} - (-1)^{|x_j||x_j^\ast|} \partial_{x_j}(fx_i^\ast) \partial_{x_j^\ast} \right)\right) \\
	&= \nabla \left(\sum_{j} \left((-1)^{|f||x_j^\ast|} f\partial_{x_j^\ast}(x_i^\ast) \partial_{x_j} - (-1)^{|x_j||x_j^\ast|} \partial_{x_j}(f)x_i^\ast \partial_{x_j^\ast} \right)\right) \\
	&= \partial_{x_i} (f) - \partial_{x_i}(f) = 0.
\end{align*}
The odd case is almost analogous to the even one.
\end{proof}

Therefore, the next result immediately follows.

\begin{proposition}\label{prop_even_double_is_unimodular}
Let $(V,m)$ be an \Linf-algebra. The even double $(V\oplus V^\ast,X_{\Dev(m)})$ defines the strictly unimodular \Linf-algebra structure $(X_{\Dev(m)},0)$ on $V\oplus V^\ast$.
\end{proposition}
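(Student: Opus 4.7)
The plan is to unfold the definition of a unimodular \Linf-algebra structure and check that the pair $(X_{\Dev(m)},0)$ satisfies both required equations; each will follow directly from an earlier result in the paper.

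First, I would recall that, by Definition~\ref{def_unimodular} and the discussion following it, a unimodular \Linf-structure on $V\oplus V^\ast$ is a pair $(m',f)$ with $m'\in\Der_{\geq 2}(\hat{S}\Pi(V\oplus V^\ast)^\ast)$ and $f\in\hat{S}_{\geq 1}\Pi(V\oplus V^\ast)^\ast$ satisfying
\[
d(m')+\tfrac{1}{2}[m',m']=0\qquad\text{and}\qquad d(f)+\tfrac{1}{2}\nabla(m')+m'(f)=0.
\]
Setting $m'=X_{\Dev(m)}$ and $f=0$, the second equation collapses to $\nabla(X_{\Dev(m)})=0$, and so the theorem reduces to checking (a) the Maurer--Cartan equation for $X_{\Dev(m)}$ and (b) the vanishing of its divergence.

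For (a), I would use that $\Dev\colon \Der(\hat{S}\Pi V^\ast)\to \hat{S}_+((\Pi V)^\ast\oplus \Pi V)$ is a morphism of dglas (the even doubling theorem) and that $\phi_{\text{ev}}\colon \hat{S}_+((\Pi V)^\ast\oplus\Pi V)\to \Der(\hat{S}((\Pi V)^\ast\oplus\Pi V))$ sending $h\mapsto X_h$ is a morphism of dglas as well. Their composite is therefore a dgla morphism, and since Maurer--Cartan is functorial, the fact that $m$ is MC in $\Der_{\geq 2}(\hat{S}\Pi V^\ast)$ immediately yields the MC equation for $X_{\Dev(m)}$ in $\Der_{\geq 2}(\hat{S}\Pi(V\oplus V^\ast)^\ast)$. (One small sanity check: $\Dev$ sends derivations of order $\geq 2$ to Hamiltonians of order $\geq 3$, whose Hamiltonian vector fields again have order $\geq 2$, so the output sits in the correct subspace.)

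For (b), the identity $\nabla(X_{\Dev(m)})=0$ is precisely the even half of Proposition~\ref{prop_div_and_doubles}, applied to the derivation $m$. Combining (a) and (b) shows that $(X_{\Dev(m)},0)$ is a Maurer--Cartan element of $\mathfrak{g}[\Pi(V\oplus V^\ast)]$, i.e.\ a strictly unimodular \Linf-structure on $V\oplus V^\ast$. There is no real obstacle here: the whole argument is a matter of assembling the functoriality of MC with Proposition~\ref{prop_div_and_doubles}, with the only genuine piece of content being the divergence computation which has already been done.
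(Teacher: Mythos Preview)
Your proposal is correct and follows essentially the same approach as the paper: unfold the unimodular conditions for $(X_{\Dev(m)},0)$, reduce them to the MC equation for $X_{\Dev(m)}$ and the vanishing of $\nabla(X_{\Dev(m)})$, and conclude by functoriality of MC under dgla morphisms together with Proposition~\ref{prop_div_and_doubles}. The paper's proof is more terse but relies on exactly the same two ingredients.
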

\begin{proof}
The statement follows from Proposition~\ref{prop_div_and_doubles}, because the conditions for $(X_{\Dev(m)},0)$ to be a unimodular \Linf-algebra structure are exactly the MC equation for $\Dev(m)$ and the equation $\nabla (X_{\Dev (\xi)} )=0$, which are both clearly satisfied.
\end{proof}

Naturally, it is also possible to double unimodular \Linf-algebra structures.

\begin{definition}
Let $(m,f)$ be a unimodular \Linf-algebra structure on the super vector space $V$.
\begin{itemize}
\item The even double of $(m,f)$ is the unimodular \Linf-algebra structure
\[
(\Dev (m),f) \in \mathfrak{g} [\Pi \left(V^\ast \oplus V \right)].
\]
\item The odd double of $(m,f)$ is the unimodular \Linf-algebra structure
\[
(\Dod (m),f) \in \mathfrak{g} [\Pi \left(V^\ast \oplus \Pi V \right)].
\]
\end{itemize}
Here $f$ is defined via the pullback of the obvious inclusions.
\end{definition}

In fact, the odd double of a unimodular \Linf-algebra structure readily permits extension to a quantum \Linf-algebra structure. Compare this with how the even double of an \Linf-algebra structure readily extended to a (strictly) unimodular \Linf-algebra structure (Proposition~\ref{prop_even_double_is_unimodular}).

\begin{proposition}\label{prop_unimodular_to_quantum}
Let $(m,f)$ be a unimodular \Linf-algebra structure on a super vector space $V$. The formal function $\Dodq(m,f):=\Dod(m)+\hbar f \in \mathfrak{h}[\Pi (V^\ast \oplus \Pi V)]$ defines a quantum \Linf-algebra structure on $V^\ast \oplus \Pi V$, where $f$ is extended via the pullback of the inclusion $V\to V\oplus \Pi V^\ast$.
\end{proposition}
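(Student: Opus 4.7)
The plan is to verify the quantum master equation for $\Dodq(m,f) = \Dod(m) + \hbar f$ by expanding in powers of $\hbar$ and using the unimodular Maurer--Cartan equations for $(m,f)$ together with properties of the odd doubling map $\Dod$ and the BV Laplacian.

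First, I would verify that $\Dodq(m,f)$ genuinely lies in $\mathfrak{h}[\Pi(V^*\oplus \Pi V)]$ and is even. Since $m \in \Der_{\geq 2}(\hat{S}\Pi V^*)$ is of order $\geq 2$, the construction of $\Dod$ makes $\Dod(m)$ a polynomial of generator-degree $\geq 3$, hence weight $\geq 3 > 2$; and $\hbar f$ has weight $2 + \deg(f) \geq 3$. As for parity, $m$ is odd and $\Dod$ is an odd morphism, so $\Dod(m)$ is even; unpacking the semi-direct product defining $\mathfrak{g}[\Pi V]$ shows that the function $f \in \hat{S}_{\geq 1}\Pi V^*$ is even, so $\hbar f$ is also even.

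Next, writing $\{,\}$ for the antibracket, I would expand the quantum MC equation and collect by powers of $\hbar$:
\begin{align*}
(d+\hbar\Delta)\Dodq(m,f) &+ \tfrac{1}{2}\{\Dodq(m,f),\Dodq(m,f)\} \\
= &\Bigl(d(\Dod(m)) + \tfrac{1}{2}\{\Dod(m),\Dod(m)\}\Bigr) \\
&+ \hbar\Bigl(d(f) + \Delta(\Dod(m)) + \{\Dod(m),f\}\Bigr) \\
&+ \hbar^2\Bigl(\Delta(f) + \tfrac{1}{2}\{f,f\}\Bigr).
\end{align*}
The $\hbar^0$ coefficient vanishes because $\Dod$ is an odd morphism of dglas, so it sends the MC element $m$ of $\Der_{\geq 2}(\hat{S}\Pi V^*)$ to a MC element of the odd dgla of formal Hamiltonians. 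The $\hbar^2$ coefficient vanishes because $f$ depends only on the coordinates of one Lagrangian component of the odd symplectic space $V^*\oplus \Pi V$: the antibracket pairs the two Lagrangians, so $\{f,f\}=0$, and likewise the mixed second partials defining $\Delta(f)$ all vanish.

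The crux is the $\hbar^1$ coefficient, which I want to match against the unimodular equation $d(f)+\tfrac{1}{2}\nabla(m)+m(f)=0$. This reduces to the two identities
\[
\Delta(\Dod(m)) = \tfrac{1}{2}\nabla(m) \quad\text{and}\quad \{\Dod(m),f\} = m(f).
\]
The first I would obtain by combining Proposition~\ref{prop_div_and_doubles} (which gives $\nabla(X_{\Dod(m)})=2\nabla(m)$) with the standard relation $\Delta(H)=\tfrac{1}{2}\nabla(X_H)$ between the BV Laplacian and the divergence of a Hamiltonian vector field in Darboux coordinates. The second is immediate from the explicit construction of $\Dod$: since $\Dod(m)\in\hat{S}\Pi V^*\otimes \Pi V$ and the antibracket pairs $\Pi V$ against $\Pi V^*$, the Hamiltonian vector field $X_{\Dod(m)}$ acts on functions of $\Pi V^*$ precisely as the original derivation $m$ does.

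The main obstacle will be keeping track of signs and factor-of-two conventions in the identity $\Delta(\Dod(m))=\tfrac{1}{2}\nabla(m)$; everything else is a formal rearrangement of terms once the expansion is in place.
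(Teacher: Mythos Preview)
Your proposal is correct and follows essentially the same approach as the paper: expand the quantum master equation in powers of $\hbar$, identify the $\hbar^0$ and $\hbar^1$ coefficients with the two unimodular Maurer--Cartan equations, and observe that the $\hbar^2$ coefficient vanishes because $f$ is pulled back from one Lagrangian factor. The paper's proof is terse and simply asserts these identifications, whereas you spell out the key intermediate identities $\Delta(\Dod(m)) = \tfrac{1}{2}\nabla(m)$ and $\{\Dod(m),f\} = m(f)$ and explicitly note that $\{f,f\}=0$; your caution about tracking the factor of two in the first identity is well placed, since combining Proposition~\ref{prop_div_and_doubles} with the standard $\Delta(H)=\tfrac{1}{2}\nabla(X_H)$ appears at first to give $\nabla(m)$ rather than $\tfrac{1}{2}\nabla(m)$, so you will indeed need to unwind the paper's sign and normalisation conventions for the odd bracket and Hamiltonian vector field carefully.
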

\begin{proof}
The MC equation for $\Dod(m)+\hbar f$ splits into three equations: the constant, linear, and quadratic parts in $\hbar$. The first two equations are precisely the ones that define the unimodular \Linf-algebra structure and are hence satisfied. The third is $\Delta (f)=0$, which clearly holds as $f$ is defined via the pullback.
\end{proof}

Thus, we have the following consequence of Proposition~\ref{prop_even_double_is_unimodular} and Proposition~\ref{prop_unimodular_to_quantum}.

\begin{lemma}\label{lem_quadruple}
Let $m$ be a \Linf-algebra structure on a super vector space $V$. The formal function $\Dodq\circ\Dev (m)$ defines a quantum \Linf-algebra structure on $(V\oplus V^\ast) \oplus \Pi (V\oplus \Pi V^\ast)$. \qed
\end{lemma}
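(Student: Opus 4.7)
The proof is essentially a formal composition of Proposition~\ref{prop_even_double_is_unimodular} and Proposition~\ref{prop_unimodular_to_quantum}, so the plan is simply to verify that the outputs and inputs of those two results match up in the required way, and to track the notation correctly.

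First I would apply Proposition~\ref{prop_even_double_is_unimodular} to the \Linf-algebra structure $m$ on $V$. This produces a strictly unimodular \Linf-algebra structure $(\Dev(m), 0)$ on the even double $V \oplus V^\ast$: the MC equation for $\Dev(m)$ holds because $\Dev$ is a dgla morphism and $m$ is MC, while the unimodular condition $d(0) + \tfrac{1}{2}\nabla(\Dev(m)) + \Dev(m)(0) = 0$ reduces to the vanishing of $\nabla(\Dev(m))$ provided by Proposition~\ref{prop_div_and_doubles}.

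Next I would feed this strictly unimodular structure into Proposition~\ref{prop_unimodular_to_quantum}, regarding $V \oplus V^\ast$ as the super vector space to be doubled oddly. The proposition yields
\[
\Dodq(\Dev(m), 0) \;=\; \Dod(\Dev(m)) + \hbar \cdot 0 \;=\; \Dod(\Dev(m)),
\]
as a quantum \Linf-algebra structure on the odd double of $V \oplus V^\ast$, whose underlying super vector space is the quadruple $(V \oplus V^\ast) \oplus \Pi(V \oplus \Pi V^\ast)$ asserted in the statement. Identifying this expression with the notation $\Dodq \circ \Dev(m)$ from the lemma completes the argument.

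There is no real technical obstacle here; the statement is a transparent concatenation of the two preceding propositions. The only small piece of bookkeeping to be careful with is the abuse of notation in which $\Dev(m)$ denotes simultaneously a Hamiltonian and its associated Hamiltonian vector field — one must verify that the function $f = 0$ inputted into $\Dodq$ is indeed extended by pullback to $0$ on the larger odd-doubled space (which is immediate), so that the full $\hbar$-expansion of $\Dodq \circ \Dev(m)$ consists only of the classical term $\Dod(\Dev(m))$ with no additional quantum correction. With this in hand, the MC equation of Definition~\ref{def_qLinf} for $\Dod(\Dev(m))$ in $\mathfrak{h}[\Pi (V \oplus V^\ast) \oplus (V \oplus \Pi V^\ast)]$ follows directly from the two propositions already invoked.
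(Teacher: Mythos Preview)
Your proposal is correct and follows exactly the approach the paper takes: the lemma is stated immediately after Proposition~\ref{prop_even_double_is_unimodular} and Proposition~\ref{prop_unimodular_to_quantum} as their direct consequence, and your write-up simply spells out the composition of those two results with the appropriate bookkeeping. The only minor point is a notational slip in your final displayed space (the placement of $\Pi$ inside $\mathfrak{h}[\,\cdot\,]$), but this does not affect the argument.
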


\begin{remark}
At first, `quadrupling' like this may seem unnatural, but this procedure provides the richer structure necessary for integration --- exactly like the addition of (anti-)ghosts and anti-fields in quantum field theory, see Section~\ref{sec_ghosts_antifields} for a more direct comparison.
\end{remark}

\begin{proposition}\label{prop_doubles_respect_homotopy}
Let $m_1$ and $m_2$ be two equivalent \Linf-algebra structures on a super vector space $V$. Then the following are equivalent \Linf-algebra structures:
	\begin{itemize}
	\item $\Dev (m_1)$ and $\Dev (m_2)$;
	\item $\Dod (m_1)$ and $\Dod (m_2)$.
	\end{itemize}
Similarly, if $(m_1,f_1)$ and $(m_2,f_2)$ be two equivalent unimodular \Linf-algebra structures on $V$, then the following are equivalent unimodular \Linf-algebra structures:
	\begin{itemize}
	\item $(\Dev (m_1),f_1)$ and $(\Dev (m_2),f_2)$;
	\item $(\Dod (m_1),f_1)$ and $(\Dod (m_2),f_2)$.
	\end{itemize}
Moreover, the quantum \Linf-algebra structures $\Dodq (m_1,f_1)$ and $\Dodq (m_2,f_2)$ are equivalent.
\end{proposition}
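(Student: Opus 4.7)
The plan is to exploit the general fact that any (possibly odd) morphism of pronilpotent dglas automatically preserves homotopy of MC elements: if $\phi\colon\mathfrak{g}\to\mathfrak{h}$ is a dgla morphism and $h(t)\in\MC(\mathfrak{g}[t,dt])$ interpolates between $m_1,m_2\in\MC(\mathfrak{g})$, then tensoring with $\id_{k[t,dt]}$ gives $\phi(h(t))\in\MC(\mathfrak{h}[t,dt])$ interpolating between $\phi(m_1)$ and $\phi(m_2)$. An odd dgla morphism preserves the MC equation under the standard parity conventions, so the same principle applies in that setting. Thus the task reduces to exhibiting each doubling construction as such a morphism of the appropriate dglas.

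For the bare \Linf\ case the conclusion is immediate from the already-cited Braun-Lazarev theorem, which realises $\Dev$ as a dgla morphism and $\Dod$ as an odd dgla morphism between the relevant derivation dglas. For the unimodular case, I would promote each doubling to a morphism on the semi-direct products $\mathfrak{g}[\Pi V]\to\mathfrak{g}[\Pi(V^\ast\oplus V)]$ (and analogously for the odd version), by acting as $\Dev$ (respectively $\Dod$) on the vector field factor and as the canonical pullback of functions along the natural projection on the function factor. Bracket-compatibility then follows from $\Dev/\Dod$ being Lie morphisms together with the derivation property of the Poisson bracket on the pulled-back function component. Compatibility with the combined differential $d+d_e$ is the subtler point: the identities $\nabla(X_{\Dev(\xi)})=0$ and $\nabla(X_{\Dod(\xi)})=2\nabla(\xi)$ from Proposition~\ref{prop_div_and_doubles} are exactly what is needed to reconcile the $\frac{1}{2}\Pi\nabla$ contributions coming from $d_e$ on the two sides. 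For the quantum case, given a homotopy $(m(t),f(t))\in\MC(\mathfrak{g}[\Pi V][t,dt])$ of unimodular structures, I would set $\Dodq(m(t),f(t)):=\Dod(m(t))+\hbar f(t)$; applying Proposition~\ref{prop_unimodular_to_quantum} over the extended base $k[t,dt]$ shows this lies in $\MC(\mathfrak{h}[\Pi(V^\ast\oplus\Pi V)][t,dt])$, yielding the desired homotopy between $\Dodq(m_1,f_1)$ and $\Dodq(m_2,f_2)$.

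The main obstacle is the unimodular step: verifying that the extended doublings genuinely give dgla morphisms requires careful bookkeeping of signs and of the interplay between the divergence $\nabla$, the Hamiltonian construction $\xi\mapsto X_\xi$, and the pullback of the function component, and it is here that Proposition~\ref{prop_div_and_doubles} is indispensable. Once this is in place, the \Linf\ case reduces to Braun-Lazarev and the quantum case reduces to Proposition~\ref{prop_unimodular_to_quantum}, both applied in the $k[t,dt]$-enriched setting.
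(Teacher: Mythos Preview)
Your proposal is correct and takes essentially the same approach as the paper: the paper's entire proof is the single sentence ``By extending the relevant doubling map $t,dt$-linearly, the homotopy in the domain is mapped to one in the image,'' which is precisely the general principle you articulate at the outset. The additional care you take in the unimodular case---promoting the doublings to dgla morphisms on the semi-direct products and invoking Proposition~\ref{prop_div_and_doubles} to reconcile the $d_e$-terms---goes beyond what the paper spells out, but the underlying strategy is identical.
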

\begin{proof}
By extending the relevant doubling map $t,dt$-linearly, the homotopy in the domain is mapped to one in the image.
\end{proof}

\subsection{Quadruples and field theory}\label{sec_ghosts_antifields}

Here we will explain how the `quadruple' vector space of Lemma~\ref{lem_quadruple} can be interpreted using (anti-)ghosts and anti-fields of quantum field theory. Our first step is to recall the BRST construction \cite{BRS_renormalisation_of_gauge, tyutin}. In particular, we will recall the homological treatment of this construction after explaining its motivation as a symplectic reduction. The aim of the BRST construction is to replace a gauge symmetry by a rigid symmetry, which is present after fixing the gauge. To achieve this, extra fields are added to the theory and the BRST symmetry is a nilpotent differential on this enlarged space. Hence, the BRST symmetry defines a cohomology theory, known as BRST cohomology. Moreover, we have that the zeroeth cohomology of this theory is given precisely by the gauge invariant functions (the observables) of the original theory.

\subsubsection{\texorpdfstring{\Linf-algebras}{Homotopy Lie algebras} as `ghost' field theories}

In order to compare the doubling constructions with the addition of (anti-)ghosts and anti-fields, we must place the data of an \Linf-algebra into the realm of a gauge theory without adding too much in the way of artificial auxiliary constructions. Let us, therefore, consider the rather simplified collection of data as a gauge theory:
	\begin{enumerate}
	\item the phase space is a point $M = \lbrace 0 \rbrace$;
	\item the algebra of functions on $M$ is therefore the ground field $C^{\infty} (M) = \mathbb{R}$;
	\item the action functional $M\to \mathbb{R}$ is the zero map;
	\item the gauge symmetry is given by an \Linf-algebra structure $m$ on a super vector space $V$, but without any Hamiltonian action $\delta \colon V\to C^{\infty} (M)$.
	\end{enumerate}
In short, the only non-trivial piece of the above data is the \Linf-algebra. (Hamiltonian formalism.)

The data above defines the momentum map (or moment map\footnote{The term moment map is a misnomer, stemming from a mistranslation of the french for ``application moment''.}) $\Phi \colon M \to V^\ast$, which is defined by
\[
\Phi (m) (v) = \delta (v) (m).
\]
Of course, in this particular case the momentum map is rather boring -- -the image contains one linear function $\Phi (0)$ which maps any element of $v$ to $\delta (v)\in \mathbb{R}$. The momentum map is the geometric generalisation of a conserved quantity, such as linear momentum or angular momentum, and it is equivariant with respect to the coadjoint action of $V$ on $V^\ast$.

\subsubsection{Symplectic reduction}

We will now briefly discuss the idea of a symplectic reduction, before indicating how this can be realised using homological algebra using the BRST construction.

In a general field theory, the algebra of functions $C^\infty (M)$ has a symplectic structure --- in is important to note that the BRST construction never makes essential use of this structure, but it can be helpful for calculations. Moreover, the pre-image of the momentum map $\Phi$ at a regular value $\alpha \in V^\ast$ defines a coisotropic submanifold $C$ of $M$. The theory should be considered on $C$ rather than $M$, as physical quantities are conserved on $C$. For trivial case at hand, the only non-empty pre-image is $M = \lbrace 0 \rbrace$.

The isotropy group of $\alpha$ under the coadjoint action is given by
\[
G_{\alpha} = \lbrace g \in G : g(\alpha) = \alpha \rbrace.
\]
The equivariance of $\Phi$ ensures that the quotient $B:=C/G_\alpha$ is well-defined. The quotient $B$ is known as the reduced phase space (corresponding to the momentum value $\alpha$). The fact that this space ends up with a symplectic structure induced from the original one on $M$ is known as the Marsden-Weinstein-Meyer Theorem \cite{marsden_weinstein, meyer}, which is a special case of the more general symplectic reduction. In the particular setting of the above simple data, both the coisotropic submanifold and the quotient are the point $\lbrace 0 \rbrace$.

Let us suppose for now that the \Linf-algebra is in fact a Lie algebra with associate Lie group $G$. Looking at these steps a little closer, we have
\[
C^{\infty} (C) = C^{\infty} (M) / \langle \delta (V) \rangle,
\]
where $\langle \delta (V) \rangle$ is the ideal generated by the image of $\delta$ (which is exactly the ideal of functions that vanish on $C$). Next, a function on the quotient $C/G_\alpha$ can be thought of as a function on $C$ that is invariant under the action of $G$. Hence, we have
\[
C^{\infty} (B) = C^{\infty} (C)^{G},
\]
where the superscript $G$ indicates the $G$-invariants. Therefore, we see there are two steps in this symplectic reduction process:
	\begin{enumerate}
	\item restriction to $C$, where quantities are conserved;
	\item projecting onto the quotient $B$, dealing with orbits of the symmetry.
	\end{enumerate}
	
\subsubsection{The BRST construction}
	
We will now cast these two steps into the BRST construction \cite{barnich_brandt_henneaux, figueroa_kimura, kostant_sternberg}. To begin with in the BRST construction, one would ordinarily construct a differential $\delta$ from the action the \Linf-algebra structure by extending the linear map $V\to C^{\infty} (M)$ to an odd derivation $\delta \colon \hat{S}^{q} \Pi V \otimes C^{\infty} (M) \to \hat{S}^{q-1} \Pi V \otimes C^{\infty} (M)$. It is clear that this forces $\delta^2=0$ and so this extension, therefore, defines a chain complex
\[
\cdots \xrightarrow{\delta} \hat{S}^{2} \Pi V \otimes C^{\infty} (M) \xrightarrow{\delta} \Pi V \otimes C^{\infty} (M) \xrightarrow{\delta} C^{\infty} (M) \to 0.
\]
This construction is essentially the one due to Koszul \cite{koszul} (and a related less-restrictive construction is due to Tate \cite{tate}). The differential $\delta$ is known as the Koszul (or Koszul-Tate) differential and the complex is known as the Koszul(-Tate) resolution, because it defines a resolution of functions on $C$. That is, the homology of the chain complex is trivial in all degrees except degree $0$, where the homology is precisely the functions on $C$. In physics the $\mathbb{Z}$ grading of this chain complex is known as the ghost number. Note that we also have a $\mathbb{Z}_2$ grading, and so this complex is $\mathbb{Z} \times \mathbb{Z}_2$ graded. The elements of $V$ are known as ghosts and in the Tate construction the extra cochains are known as antighosts. For us, however, there is no need for this extension and we have $\hat{S} \Pi V \otimes C^{\infty} (M) \cong \hat{S} \Pi V$. The differential $\delta$ acts as the identity on linear components and is extended to general tensors as an odd derivation.

The next step is to consider $\hat{S} \Pi V \otimes C^{\infty} (M)$ as a module for the \Linf-algebra structure $m$ on $V$. We can then consider the Chevalley-Eilenberg complex for the Lie algebra cohomology with coefficients in the module $\hat{S} \Pi V \otimes C^{\infty} (M)$, which is
\[
\hat{S} \Pi V^\ast \otimes \hat{S} \Pi V \otimes C^{\infty} (M) \cong \hat{S} \Pi V^\ast \otimes \hat{S} \Pi V,
\]
with the usual Chevalley-Eilenberg differential $d_{CE}$. The zeroth cohomology is the space of invariants of the action on the module $\hat{S} \Pi V \otimes C^{\infty} (M)$, or in other words the functions on $B$.

Extending the differential $\delta$ to $\delta:=\id \otimes \delta$ we have that $d_{CE}^2=0$, $\delta^2=0$, and $\delta \circ d_{CE} = d_{CE} \circ \delta$. Hence, we have the data of a double complex
\begin{center}
\begin{tikzpicture}
\matrix(m)[matrix of math nodes, column sep = {0.8em,between borders}, row sep = 2em]{
 & \vdots &[1.4em] \vdots \\
\cdots & \hat{S}^{p} \Pi V^\ast \otimes \hat{S}^{q} \Pi V \otimes C^{\infty} (M) & \hat{S}^{p} \Pi V^\ast \otimes \hat{S}^{q-1} \Pi V \otimes C^{\infty} (M) & \cdots \\
\cdots & \hat{S}^{p+1} \Pi V^\ast \otimes \hat{S}^{q} \Pi V \otimes C^{\infty} (M) & \hat{S}^{p+1} \Pi V^\ast \otimes \hat{S}^{q-1} \Pi V \otimes C^{\infty} (M) & \cdots \\
 & \vdots & \vdots \\
};
\path[->]
(m-1-2) edge (m-2-2)
(m-1-3) edge (m-2-3)
(m-2-1) edge (m-2-2)
(m-2-3) edge (m-2-4)
(m-3-1) edge (m-3-2)
(m-3-3) edge (m-3-4)
(m-3-2) edge (m-4-2)
(m-3-3) edge (m-4-3)
(m-2-2) edge node[above] {$\delta$} (m-2-3)
(m-2-2) edge node[left] {$d_{CE}$} (m-3-2)
(m-3-2) edge node[above] {$\delta$} (m-3-3)
(m-2-3) edge node[left] {$d_{CE}$} (m-3-3)
;
\end{tikzpicture}
\end{center}
Accordingly, we can construct the total complex 
\[
\operatorname{Tot}_{k} = \bigoplus_{p-q=k} \hat{S}^{p} \Pi V^\ast \otimes \hat{S}^{q} \Pi V \otimes C^{\infty} (M) \cong \bigoplus_{p-q=k} \hat{S}^{p} \Pi V^\ast \otimes \hat{S}^{q} \Pi V
\]
with total differential $D=d_{CE} + \delta \colon \operatorname{Tot}_{k} \to \operatorname{Tot}_{k+1}$.

We also have that the zeroth cohomology of this total complex is given by $C^{\infty} (B)$. In this sense, the BRST has provided a homological way to view the functions on the reduced phase space.

The total differential $D$, which is the ``classical'' BRST operator (or BRST symmetry), is precisely $\Dev (m)$ and as such is an inner derivation. This observation is well-known even when $M$ is more interesting. In this sense, the symmetry has been encoded as a Hamiltonian function on the space of fields, ghosts, and anti-ghosts. In fact, the classical BRST operator $\Dev (m)$ is precisely the one found by Batalin-Vilkovisky \cite{batalin_vilkovisky_S-matrix}. We note that in the general case, the construction of $D$ may not square to zero, but this can be fixed using homological perturbation theory \cite{henneaux}.

The poisson bracket is degree one in the ghost number.

Here we have the ghost fields $V$ and $V^\ast$. In physics, ghosts are meant to describe non-physical fields, and so it is fitting that our field theory is made entirely of ghosts, meaning that it has no physical meaning.


All in all, we are considering the algebra of functions $\hat{S}\Pi (V\oplus V^\ast)^\ast$ on the manifold $V\oplus V\ast$ of ghost fields.



One idea in QFT is to quantize the algebra $\hat{S} \Pi (V\oplus V^\ast)$ by constructing the Clifford algebra $C(V \oplus V^\ast)$ using the canonical quadratic form given by evaluation of $V^\ast$ on $V$. However, this does not fit with the doubling constructions. Another idea, and one that fits with the doubling constructions, is the BV-formalism.

\subsubsection{An alternative field theory}

Alternatively, we realise an \Linf-algebra structure $m$ on a super vector space $V$ as a field theory by setting:
	\begin{enumerate}
	\item $\Pi V^\ast$ as the space of fields;
	\item the zero function $\hat{S} \Pi V^\ast \to \mathbb{R}$ as the action functional;
	\item the coadjoint action as the set of gauge symmetries.
	\end{enumerate}
	
Therefore, similar to before, we construct a resolution of the action-invariant functions on fields, $\hat{S} \Pi V$, i.e.~we construct the Chevalley-Eilenberg cochain complex
\[
\hat{S} \Pi V^\ast \otimes \hat{S} \Pi V \cong \hat{S} \Pi (V\oplus V^\ast)^\ast.
\]
As before, the differential on this complex is precisely that of $\Dev(m)$.

\subsubsection{Quantising and the BV-formalism}

So far we have recast a classical theory and a reduction into homological algebra. Quantising classical theories in the BRST formalism involves Clifford algebras and can be a fruitful technique. However, we will quantise the theory using the BV-formalism instead as it aligns with the doubling construction and hence our interpretation of doubling via QFT.

So far, interpreting the classical ghost theory using the path integral formalism, we wish to make sense of integrals over $V\oplus V^\ast$ against the measure $\mu$. In one sense we are in a good situation, the derivation $\Dev(m)$ preserves the measure $\mu$. There is, however, a problem: there is no action functional to apply perturbative techniques of integral evaluation to and even if there was this functional would be highly degenerate on $V\oplus V^\ast$. The BV-formalism allows us to rectify this issue.

According to the BV-formalism, we take the shifted cotangent bundle of the space of fields. The functions on the shifted cotangent space is the ring of polyvector fields. Taking the shifted cotangent bundle results in an odd symplectic space with canonical BV-laplacian that is compatible with the measure. Explicitly for the `ghost theory' constructed in the previous section, the shifted cotangent bundle of the (ghost) fields $V\oplus V^\ast$ is
\[
\Pi \operatorname{T}^\ast (V\oplus V^\ast) \cong (V\oplus V^\ast) \oplus \Pi (V\oplus V^\ast)^\ast.
\]
Clearly, as a vector space, this is the odd double of the even double. We call the latter two summands $\Pi (V\oplus V^\ast)^\ast$ the anti-ghosts. If there were also non-ghost fields, these would also have anti-fields. The algebra of functions on the shifted cotangent space is
\[
\hat{S} \Pi ((V\oplus V^\ast) \oplus \Pi (V\oplus V^\ast)^\ast)^\ast.
\]
Transferring the \Linf-algebra structure from $V\oplus V^\ast$ to its shifted cotangent space is done in precisely the fashion of the odd double, meaning the odd double lifts to a quantum \Linf-algebra structure $S_0 := \Dodq \circ \Dev (m)$. Explicitly, we consider the function $S_0$ as an action functional, the BV-action. When we begin with a non-trivial action functional $f$ on $V\oplus V^\ast$, the resulting BV-action functional is the sum of the original action functional $f$ (extended via pullback) and the action functional $S_0$. The action functional is $f$ preserved by the vector field $\Dev(m)$ and so it is clear that $f+S_0$ satisfies the classical master equation
\[
d(f+S_0) + \frac{1}{2}\lbrace f + S_0, f + S_0 \rbrace = 0.
\]
In this sense, we are still considering the `classical' BV complex. That is, we are considering the complex
\[
\hat{S} \Pi ((V\oplus V^\ast) \oplus \Pi (V\oplus V^\ast)^\ast)^\ast.
\]
with differential $d + \lbrace S_0 , - \rbrace$. Putting this another way, we have the Chevalley-Eilenberg complex on the cotangent bundle of the \Linf-algebra of the derivation $\lbrace S_0 , - \rbrace$. In quantising this algebraic structure, we introduce the formal parameter $\hbar$ and the BV-Laplacian $\hbar \Delta$, so that the complex we are considering is 
\[
\hat{S} \Pi ((V\oplus V^\ast) \oplus \Pi (V\oplus V^\ast)^\ast)^\ast [|\hbar|]
\]
and the differential is $d + \hbar \Delta + \lbrace S_0 , - \rbrace$. This differential is sometimes known as the quantum BV operator and this quantisation is known as cotangent quantisation \cite{gwilliam_thesis}. Now, since $f\equiv 0$ and the function $S_0$ is harmonic (coming from a unimodular \Linf-algebra structure) we automatically have that $S_0$ satisfies the quantum master equation
\[
(d+\hbar \Delta) e^{\frac{S_0}{\hbar}} = 0 \left( \Leftrightarrow (d+\hbar \Delta) (S_0) + \frac{1}{2} \lbrace S_0 , S_0 \rbrace = 0 \right).
\]
In the general case, extending a solution to the classical master equation is no easy task and usually one attempts to introduce higher order terms step-by-step. The fact that the $d + \hbar \Delta + \lbrace S_0 , - \rbrace$ squares to zero is a consequence of the following facts: $d$ and $\Delta$ commute; the odd dgla structure of $\hat{S} \Pi ((V\oplus V^\ast) \oplus \Pi (V\oplus V^\ast)^\ast)^\ast [|\hbar|]$; the quantum master equation for $S_0$.


This step converts our artificial classical field theory (coming from the original \Linf-algebra structure $m$) into an action functional that satisfies the quantum master equation and hence a quantum field theory. In particular, the constructed action functional can be integrated using perturbation theory to produce an effective field theory (which is the minimal model of the quantum \Linf-algebra structure). In this way, the even double creates a classical theory that is then quantised by applying the odd double. Moreover, this process is such that there is no difficulty in constructing the quantum action from the classical one, as the classical action is harmonic (meaning all steps in the step-by-step construction of the quantum action in $\hbar$ become trivial).

By the well-known BV Stokes' Theorem (Proposition~\ref{prop_BV_stokes}), we can see that for any $\Delta$-closed function the expectation is independent of continuous deformations of Lagrangian subspace and if it is $\Delta$-exact, then its expectation is zero. Therefore, the $\Delta$-cohomology on
\[
\hat{S} \Pi ((V\oplus V^\ast) \oplus \Pi (V\oplus V^\ast)^\ast)^\ast.
\]
controls the gauge-fixing of path integrals.

\section{Halving}\label{sec_halving}

We will now define one-sided inverses to the doubling maps, which we call the `halving maps' --- these maps are the obvious maps. Despite the doubling maps being morphisms of dglas, the halving maps are not maps of dglas. This turns out to not be a problem, because we are only interested in Lie subalgebras of the doubles where the halving maps are in fact morphisms of dglas.

\begin{definition}
The even halving map is the map
\[
\Hev \colon \hat{S}_+ \left(V^\ast \oplus V \right) \to \hat{S}_+ V^\ast \otimes V \cong \Der \left(\hat{S}_+ V^\ast \right),
\]
given by $f\otimes v \mapsto f\otimes v$ for all $f\in \hat{S}_+ V^\ast$ and $v\in V$, and zero otherwise.

Similarly, the odd halving map
\[
\Hod \colon \hat{S}_+ \left(V^\ast \oplus \Pi V \right) \to \hat{S}_+ V^\ast \otimes V \cong \Der \left(\hat{S}_+ V^\ast \right)
\]
is given by $f\otimes \Pi v \mapsto (-1)^{|f|} f\otimes v$ for all $f\in \hat{S}_+ V^\ast$ and $v\in V$, and zero otherwise.
\end{definition}

Unlike like the doubling maps, the halving maps are only morphisms of super vector spaces. However, the halving maps `undo' the doubling and restricted the to the image of the doubling maps the halving maps are morphisms of dglas.

\begin{proposition}\label{prop_H_is_one_sided_inverse_and_when_restricted_is_dgla}
We have that $\Hev \circ \Dev = \id$ and $\Hod \circ \Dod=\id$, where $\id$ is the identity map of $\Der (\hat{S}_+ V^\ast )$. Moreover, the restrictions $\Hev\mkern-8mu \mid_{\im(\Dev)}$ and $\Hod\mkern-8mu \mid_{\im(\Dod)}$ are morphisms of dglas (even and odd, respectively).
\end{proposition}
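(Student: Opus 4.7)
My plan is to split the proposition into its two parts and handle them separately. The first claim is a direct unwinding of the definitions, while the second is an abstract consequence of the Braun--Lazarev theorem combined with the first.

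For the first claim, I would check the equalities on generators of $\Der(\hat{S}_+ V^\ast) \cong \hat{S}_+ V^\ast \otimes V$. A simple tensor $f \otimes v$ with $f \in \hat{S}_+ V^\ast$ and $v \in V$ is sent by $\Dev$ to the same expression $f\otimes v$, now viewed as an element of $\hat{S}_+ V^\ast \otimes \hat{S}_+ V \subset \hat{S}_+(V^\ast \oplus V)$ via the embedding $V \hookrightarrow \hat{S}_+ V$. Applying $\Hev$ then projects back onto $\hat{S}_+ V^\ast \otimes V$ and returns $f \otimes v$, so $\Hev \circ \Dev = \id$. In the odd case, $\Dod(f \otimes v) = (-1)^{|f|} f \otimes \Pi v$, and applying $\Hod$ gives $(-1)^{|f|}(-1)^{|f|} f \otimes v = f \otimes v$; the two signs cancel to yield the identity.

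For the second claim, I would argue abstractly. By the Braun--Lazarev theorem, $\Dev$ is a morphism of dglas whose image is a sub-dgla of $\hat{S}_+(V^\ast \oplus V)$; the first claim shows $\Dev$ is injective, so it is an isomorphism of dglas $\Der(\hat{S}_+ V^\ast) \xrightarrow{\sim} \im(\Dev)$. The inverse of an isomorphism of dglas is automatically a dgla morphism, and the identity $\Hev \circ \Dev = \id$ forces $\Hev|_{\im(\Dev)}$ to agree with this inverse. The odd case is identical in structure, except that $\Dod$ is an \emph{odd} morphism of dglas, so one concludes that $\Hod|_{\im(\Dod)}$ is the inverse of this odd dgla isomorphism and is therefore itself an odd morphism of dglas.

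The only real obstacle is keeping the sign conventions straight in the odd case (verifying that the sign $(-1)^{|f|}$ appearing in the definition of $\Dod$ is exactly inverted by the sign in $\Hod$), which is routine. Conceptually, the proposition is essentially the statement that the doubling maps are split injections of (super) dglas onto their images, with the halving maps providing the retractions.
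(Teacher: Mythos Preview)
Your proposal is correct. The paper's own proof is simply ``The proof is a straightforward exercise in chasing definitions,'' so your argument is a valid and clean expansion of what the paper omits; in particular, your use of the Braun--Lazarev theorem to deduce the second claim abstractly (as the inverse of a dgla isomorphism onto its image) is exactly the kind of definition-chasing the paper has in mind.
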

\begin{proof}
The proof is a straightforward exercise in chasing definitions.
\end{proof}

Therefore, in general the halving maps will not map \Linf-algebra structures on $V\oplus V^\ast$ or $V\oplus \Pi V^\ast$ to \Linf-algebra structures on $V$. However, restricting the domain somewhat fixes this issue.

\begin{proposition}\label{prop_halving_unimodular}
If $m$ belongs to the image of $\Dev$ and defines an \Linf-algebra structure on $V\oplus V^\ast$, then $\Hev(m)$ defines an \Linf-algebra structure on $V$. Further, if $f\in \hat{S} \Pi V^\ast \subset \hat{S} \Pi (V\oplus V^\ast)^\ast$ is such that the pair $(m,f)$ defines a unimodular \Linf-algebra structure on $V\oplus V^\ast$, then $(\Hev (m),f)$ defines a unimodular \Linf-algebra structure on $V$. The analogous statement also holds for $\Dod$.
\end{proposition}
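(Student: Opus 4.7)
The plan is to reduce the proposition to Proposition \ref{prop_H_is_one_sided_inverse_and_when_restricted_is_dgla}, which already packages the two facts we need: $\Hev$ is a one-sided inverse to $\Dev$, and its restriction to $\im(\Dev)$ is a dgla morphism. Since $m\in\im(\Dev)$, I would first write $m=\Dev(\tilde m)$ for the unique preimage $\tilde m\in\Der_{\geq 2}(\hat{S}\Pi V^\ast)$, identify $\Hev(m)=\tilde m$, and observe that the MC equation for $m$ is mapped by the restricted dgla morphism $\Hev\mid_{\im(\Dev)}$ to the MC equation for $\tilde m$. This settles the first claim for both the even and odd cases.

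For the unimodular assertion, the MC half is handled as above, so only the compatibility of the divergence equation needs attention. I would expand the hypothesis that $(m,f)$ is unimodular on $V\oplus V^\ast$ into
\[
d(f)+\tfrac12\nabla(m)+m(f)=0,
\]
and then invoke Proposition \ref{prop_div_and_doubles} to simplify the divergence term: in the even case $\nabla(X_{\Dev(\tilde m)})=0$, and in the odd case $\nabla(X_{\Dod(\tilde m)})=2\nabla(\tilde m)$. A direct coordinate computation---writing $\tilde m=f_i\partial_{x_i}$, $\Dev(\tilde m)=f_ix_i^\ast$, and using the Poisson bracket $(x_i^\ast,x_j)=\delta_{ij}$ together with the fact that $f\in\hat{S}\Pi V^\ast$ is annihilated by $\partial_{x_i^\ast}$---identifies the action of $m$ on $f$ with the action of $\tilde m$ on $f$. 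Combining these two identifications assembles the displayed equation above into the undoubled unimodular equation $d(f)+\tfrac12\nabla(\tilde m)+\tilde m(f)=0$, where in the odd case the factor of two from $\nabla(X_{\Dod(\tilde m)})=2\nabla(\tilde m)$ pairs with the $\tfrac12$ to yield the correct coefficient.

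The main obstacle is the sign and parity bookkeeping in the Poisson bracket on the even or odd double when verifying the identification $m(f)=\tilde m(f)$ and matching the divergence terms; this is tedious but mechanical once the framework above is in place. The strategy benefits from the fact that every analytic ingredient---the one-sided inverse property, the dgla-morphism property on the image, and the computation of the divergence of doubled Hamiltonians---is already available, so no genuinely new structural input is required beyond the explicit coordinate check.
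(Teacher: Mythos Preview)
Your approach---reducing to Proposition~\ref{prop_H_is_one_sided_inverse_and_when_restricted_is_dgla} and supplementing with Proposition~\ref{prop_div_and_doubles} for the divergence term---is exactly in the spirit of the paper's proof, which simply cites Proposition~\ref{prop_H_is_one_sided_inverse_and_when_restricted_is_dgla}. The \Linf{} claim and the odd unimodular claim go through just as you describe: in the odd case $\nabla(X_{\Dod(\tilde m)})=2\nabla(\tilde m)$ pairs with the $\tfrac12$ to reproduce the undoubled divergence term, and the identification $m(f)=\tilde m(f)$ for $f\in\hat{S}\Pi V^\ast$ is the routine coordinate check you outline.

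There is, however, a genuine gap in the \emph{even} unimodular case that your write-up glosses over (and that the paper's one-line proof does not address either). Since $\nabla(X_{\Dev(\tilde m)})=0$, the doubled unimodular equation collapses to $d(f)+\tilde m(f)=0$, and from this alone you cannot recover $d(f)+\tfrac12\nabla(\tilde m)+\tilde m(f)=0$ unless $\nabla(\tilde m)=0$. Concretely: for any \Linf-structure $\tilde m$ with $\nabla(\tilde m)\neq 0$, the pair $(X_{\Dev(\tilde m)},0)$ is unimodular on $V\oplus V^\ast$ by Proposition~\ref{prop_even_double_is_unimodular}, yet $(\tilde m,0)$ fails the unimodular condition on $V$. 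So the even unimodular clause, as stated, requires an extra hypothesis such as divergence-freeness of $\tilde m$; your ``assembly'' step silently assumes this. Fortunately only the odd clause is invoked downstream (Proposition~\ref{prop_havling_quantum_to_unimodular}), so this does not affect the paper's applications, but you should flag the issue rather than claim the even case follows by the same mechanism.
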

\begin{proof}
The proposition follows from Proposition~\ref{prop_H_is_one_sided_inverse_and_when_restricted_is_dgla}.
\end{proof}

\begin{proposition}\label{prop_havling_quantum_to_unimodular}
Let $m=m_0+\hbar m_1 + \cdots$ be a quantum \Linf-algebra structure on $V\oplus \Pi V^\ast$ such that $m_0\in \hat{S} \Pi V^\ast \otimes V$ and $m_1 \in \hat{S} \Pi V^\ast$. The pair $\Hodq (m):=(X_{\Hod(m_0)},\Hod(m_1))$ defines a unimodular \Linf-algebra structure on $V$.
\end{proposition}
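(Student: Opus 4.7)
The strategy is to extract the $\hbar^0$ and $\hbar^1$ components of the quantum master equation and halve each of them, reversing the argument of Proposition~\ref{prop_unimodular_to_quantum}. The two components of $(d+\hbar\Delta)m+\tfrac{1}{2}[m,m]=0$ that involve only $m_0$ and $m_1$ read
\[
dm_0 + \tfrac{1}{2}[m_0,m_0]=0 \quad\text{and}\quad dm_1 + \Delta m_0 + [m_0,m_1]=0,
\]
where the bracket is the odd antibracket of the ambient BV algebra. Higher-order components of the QME only constrain $m_2,m_3,\ldots$ and are immaterial for the unimodular structure being built.

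The hypothesis $m_0\in\hat{S}\Pi V^*\otimes V$ places $m_0$ in the image of $\Dod$, so Proposition~\ref{prop_H_is_one_sided_inverse_and_when_restricted_is_dgla} applies: the restriction of $\Hod$ to this image is a morphism of odd dglas. Halving the $\hbar^0$ equation therefore yields the MC equation
\[
d\,X_{\Hod(m_0)} + \tfrac{1}{2}[X_{\Hod(m_0)},X_{\Hod(m_0)}]=0
\]
for the derivation $X_{\Hod(m_0)}\in\Der_{\geq 2}(\hat{S}\Pi V^*)$, which is precisely the first of the two MC equations defining a unimodular \Linf-algebra structure on $V$.

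To reduce the $\hbar^1$ equation to the required unimodular form
\[
d\,\Hod(m_1) + \tfrac{1}{2}\nabla(X_{\Hod(m_0)}) + X_{\Hod(m_0)}(\Hod(m_1))=0,
\]
I would verify three identities (each a short coordinate computation in a basis dual to the symplectic splitting $\Pi V^*\oplus V$): (a) $dm_1=d\,\Hod(m_1)$, automatic since $\Hod$ acts as the identity on $\hat{S}\Pi V^*$ and commutes with $d$; (b) $\Delta m_0=\tfrac{1}{2}\nabla(X_{\Hod(m_0)})$, which is the identity $\Delta\Dod(\xi)=\tfrac{1}{2}\nabla(\xi)$ implicit in the proof of Proposition~\ref{prop_unimodular_to_quantum} and closely related to $\nabla(X_{\Dod(\xi)})=2\nabla(\xi)$ of Proposition~\ref{prop_div_and_doubles}; and (c) $[m_0,m_1]=X_{\Hod(m_0)}(\Hod(m_1))$, which follows from the derivation property of the antibracket combined with the facts that $m_0$ is linear in the $V$-coordinates while $m_1$ is independent of them, so the antibracket reduces precisely to the action of the Hamiltonian vector field of $m_0$ on the function $m_1$.

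The main obstacle will be (b): the factor $\tfrac{1}{2}$ and the Koszul signs must be tracked carefully through the parity reversion implicit in $\Hod$ and through the odd Laplacian, mirroring the sign juggling in the proof of Proposition~\ref{prop_div_and_doubles}. Once (b) is in hand, (a) and (c) and the assembly of the three into the second unimodular MC equation are entirely formal.
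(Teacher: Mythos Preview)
Your proposal is correct and follows essentially the same route as the paper: extract the $\hbar^0$ and $\hbar^1$ components of the QME and halve them. The paper's proof is terser, packaging your identities (a)--(c) into a single appeal to Proposition~\ref{prop_halving_unimodular} (recognising that $(X_{m_0},m_1)$ is already a unimodular \Linf-structure on the doubled space and then halving it), whereas you spell the reductions out explicitly; the content is the same.
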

\begin{proof}
The constant part (in $\hbar$) of the QME for $m$ is 
\[
d (m_0) + \frac{1}{2}[m_0,m_0]=0
\]
and the $\hbar$-linear part of the QME for $m$ is
\[
d(m_1) + \Delta (m_0) + [m_1,m_0] =0.
\]
So, the result follows from Proposition~\ref{prop_halving_unimodular}.
\end{proof}

Just as with halving structures, it is also necessary to ask that homotopies of such structures also belong to the correct subspace before halving. In particular, a homotopy could generally belong to a larger subspace and a simple truncation will no longer automatically satisfy the MC equation.

\begin{proposition}\label{prop_halving_respects_homotopy}
Let $m_1$ and $m_2$ be homotopic \Linf-algebra structures on $V^\ast \oplus V$ via homotopy $h(t)$ in the image of $\Dev$. Then $\Hev (m_1)$ and $\Hev (m_2)$ are homotopic via $\Hev (h(t))$. Similarly for \Linf-algebra structures on $V^\ast \oplus \Pi V$ and $\Hod$.
\end{proposition}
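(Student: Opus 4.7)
The plan is to transport the homotopy through the halving map using the fact, established in Proposition~\ref{prop_H_is_one_sided_inverse_and_when_restricted_is_dgla}, that $\Hev$ restricted to $\im (\Dev)$ is a dgla morphism (and similarly for the odd version). Since a homotopy of MC elements is itself an MC element in a larger dgla, mapping MC elements to MC elements through a dgla morphism will give the desired homotopy.

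Concretely, first I would extend $\Hev$ to a $\mathbb{R}[t,dt]$-linear map on $\mathfrak{g}[t,dt] = \mathfrak{g}\hat{\otimes} \mathbb{R}[t,dt]$ where $\mathfrak{g}$ is the ambient dgla of formal Hamiltonians on $V^\ast \oplus V$. Because the extension of $\Dev$ is also $\mathbb{R}[t,dt]$-linear, the $t,dt$-linear extension of $\Hev$ restricted to the image of the extended $\Dev$ remains a morphism of dglas: the bracket, differential, and the halving map all act componentwise in the $\mathbb{R}[t,dt]$-factor, and Proposition~\ref{prop_H_is_one_sided_inverse_and_when_restricted_is_dgla} handles the $\mathfrak{g}$-factor. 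Thus $\Hev \colon \im(\Dev)[t,dt] \to \Der_{\geq 2}(\hat{S}V^\ast)[t,dt]$ is a morphism of dglas.

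Next, by hypothesis $h(t) \in \MC(\mathfrak{g}[t,dt])$ lies in the image of the extended $\Dev$ and satisfies $h(0)=m_1$, $h(1)=m_2$. Applying the dgla morphism $\Hev$ yields an element $\Hev(h(t)) \in \MC(\Der_{\geq 2}(\hat{S}V^\ast)[t,dt])$ since dgla morphisms preserve MC elements. Evaluation at $t=0$ and $t=1$ commutes with $\Hev$ by its $\mathbb{R}[t,dt]$-linearity, so $\Hev(h(0))=\Hev(m_1)$ and $\Hev(h(1))=\Hev(m_2)$, giving the required homotopy. The odd case is identical after replacing $\Dev,\Hev$ by $\Dod,\Hod$ and using the odd version of Proposition~\ref{prop_H_is_one_sided_inverse_and_when_restricted_is_dgla}.

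There is essentially no obstacle here: the entire content is that $\Hev$ and $\Hod$ fail to be dgla morphisms only outside the image of the doubling maps, and the hypothesis is tailored precisely to land inside those images (with the extended parameter ring $\mathbb{R}[t,dt]$ along for the ride). The only point that deserves a sentence of justification is the compatibility of the halving map with the $\mathbb{R}[t,dt]$-extension, which is immediate from its definition on pure tensors $f\otimes v$.
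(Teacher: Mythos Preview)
Your proposal is correct and follows essentially the same approach as the paper: extend the halving map $t,dt$-linearly, use that its restriction to the image of the doubling map is a dgla morphism (hence preserves MC elements), and observe that evaluation at $t=0,1$ commutes with the extension. If anything, your write-up is more explicit than the paper's, which condenses all of this into a single sentence.
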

\begin{proof}
Extending the halving map $t$,$dt$-linearly, we see that $\Hev (h(t))$ defines an \Linf-algebra structure on $V$. Moreover, specialising to $t=0$ and $t=1$ recovers $\Hev (m_1)$ and $\Hev (m_2)$, respectively. Hence, $\Hev (h(t))$ defines a homotopy of the \Linf-algebra structures $\Hev (m_1)$ and $\Hev (m_2)$. The odd result is analogous.
\end{proof}

\section{The BV-formalism}\label{sec_BV}

\subsection{Strong deformation retracts}\label{sec_decomp}

When a vector space is equipped with a strong deformation retract (SDR) onto some choice of representatives for homology, one has a canonical choice of isotropic/Lagrangian subspace. This isotropic subspace is crucial to the integration of the BV-formalism. The notion of a SDR from a space onto its homology is equivalent to that of a Hodge decomposition, see \cite{chuang_laz_hodge,chuang_laz_feynman}. Such a SDR, therefore, always exists for any given finite-dimensional dg vector space.

\begin{definition}\label{def_SDR}
A SDR from $V$ to $U$ is a pair of even dg vector space morphisms $i:U\hookrightarrow V$ and $p:V\twoheadrightarrow U$ and an odd linear morphism $s:V\to V$ such that:
\begin{itemize}
\item $p\circ i=\id_U$;
\item $d\circ s+s\circ d=\id_V - i\circ p$;
\item the side conditions: $s\circ i=0$, $p\circ s=0$, and $s^2=0$.
\end{itemize}
\end{definition}

Generally, a SDR will be written as a triple $(i,p,s)$ when the super vector spaces are obvious from context.

The side conditions $si=0$, $ps=0$, and $s^2=0$ are included in the definition of a SDR, because they can be imposed at no cost as the following proposition states.

\begin{proposition}
Let $U,V$ be two dg vector spaces and $(i,p,s)$ be morphisms satisfying all the conditions of a SDR bar the side conditions, then $s$ can be replaced with a morphism $s'$ in such a way that the triple $(i,p,s')$ is a SDR.
\end{proposition}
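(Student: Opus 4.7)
The plan is to modify $s$ in two stages: first enforce the two linear side conditions $s_1 i = 0$ and $p s_1 = 0$, then enforce the quadratic condition $(s')^2 = 0$ without disturbing the linear ones. At each stage I must re-verify that the chain-homotopy identity $ds + sd = \id_V - ip$ still holds.

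For the first stage I would set $s_1 := (\id_V - ip)\, s\, (\id_V - ip)$. Because $pi = \id_U$, the operator $\id_V - ip$ is idempotent, so the conditions $s_1 i = 0$ and $p s_1 = 0$ are immediate. The chain-map property of $i$ and $p$ makes $\id_V - ip$ commute with $d$, so sandwiching $ds + sd = \id_V - ip$ between two copies of the idempotent, together with $(\id_V - ip)^2 = \id_V - ip$, restores the homotopy identity at no cost.

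For the second stage I would set $s' := s_1 d s_1$. The conditions $s' i = 0$ and $p s' = 0$ are inherited trivially from those of $s_1$. To re-verify the chain-homotopy identity I would substitute $d s_1 = \id_V - ip - s_1 d$ and use the stage-one side conditions to cancel the unwanted $ip$-terms, which collapses $ds' + s'd$ back to $ds_1 + s_1 d = \id_V - ip$. The one step that genuinely requires a small manipulation, and the main obstacle, is showing $(s')^2 = 0$. Here I would multiply the homotopy identity for $s_1$ on the left and on the right by $s_1$; the obstruction terms $i(p s_1)$ and $(s_1 i)p$ vanish precisely because of stage one, and subtracting the two resulting identities yields $d s_1^2 = s_1^2 d$, whence $(s')^2 = s_1 (d s_1^2 d) s_1 = s_1^3 d^2 s_1 = 0$. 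This is the reason the two stages cannot be swapped: the quadratic condition rests on the linear ones already being in place.
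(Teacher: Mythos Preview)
Your proof is correct and is essentially identical to the paper's: the paper writes the first-stage replacement as $\tilde{s}=(ds+sd)\,s\,(ds+sd)$, which is precisely your $s_1=(\id_V-ip)\,s\,(\id_V-ip)$ via the homotopy identity, and then takes $s'=\tilde{s}\,d\,\tilde{s}$ just as you do. Your write-up supplies the verifications (in particular the commutator argument for $(s')^2=0$) that the paper leaves implicit.
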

\begin{proof}
If $s$ does not satisfy $s\circ i=0$ and $p\circ s=0$, then it can be replaced with $\tilde{s}=(d\circ s+s\circ d)\circ s\circ (d\circ s+s\circ d)$. The triple $(i,p,\tilde{s})$ now satisfies everything in Definition~\ref{def_SDR} with the possible exception of $\tilde{s}^2=0$. Replacing $\tilde{s}$ with $s^\prime = \tilde{s}\circ d\circ \tilde{s}$ means the triple $(i,p,s')$ is a SDR.
\end{proof}

\begin{proposition}
Let $V$ be a super vector space with an SDR $(i,p,s)$ onto its homology $\homology (V)$. Then $V=\im (i)\oplus\im (d)\oplus\im (s)$. Moreover, $d\colon \im(s)\to \im (d)$ is an odd linear isomorphism with inverse $s$.
\end{proposition}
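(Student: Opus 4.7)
The plan is to use the homotopy identity $\id_V = i\circ p + d\circ s + s\circ d$ as the generating relation, combined with the side conditions $s\circ i=0$, $p\circ s=0$, $s^2=0$, and the chain-map properties of $i$ and $p$. Note that $\homology(V)$ carries the zero differential, so being a chain map forces $d\circ i=0$ and $p\circ d=0$; these two identities, though not listed explicitly in Definition~\ref{def_SDR}, are essential and I would cite them up front.

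For the decomposition, I would simply apply the homotopy identity to an arbitrary $v\in V$ to write
\[
v = i(p(v)) + d(s(v)) + s(d(v)) \in \im(i) + \im(d) + \im(s),
\]
which gives the sum. To prove directness, suppose $i(u) + d(w) + s(z) = 0$. Applying $p$ and using $p\circ i=\id$, $p\circ d=0$, and $p\circ s=0$ kills the last two terms and yields $u=0$. Applying $d$ to the residual relation $d(w)+s(z)=0$ and using $d^2=0$ gives $d(s(z))=0$; applying $s$ instead and using $s^2=0$ gives $s(d(w))=0$. Now feed $s(z)$ into the homotopy identity:
\[
s(z) = i(p(s(z))) + d(s(s(z))) + s(d(s(z))) = 0,
\]
where each of the three terms vanishes by $p\circ s=0$, $s^2=0$, and $d(s(z))=0$ respectively. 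Thus $s(z)=0$, and therefore $d(w)=0$ as well.

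For the last claim, $d$ visibly sends $\im(s)$ into $\im(d)$, and I would prove it is a bijection by exhibiting $s|_{\im(d)}$ as a two-sided inverse. For $v=s(v')\in\im(s)$, evaluating the homotopy identity at $v$ and using $s^2=0$ together with $p\circ s=0$ collapses everything to $s(d(s(v')))=s(v')$, i.e.\ $s\circ d=\id$ on $\im(s)$. For $v=d(w)\in\im(d)$, evaluating the homotopy identity at $w$ and then applying $d$ gives $d(s(d(w))) = d(w) - d(i(p(w))) - d^2(s(w)) = d(w)$, using $d\circ i=0$ and $d^2=0$; hence $d\circ s=\id$ on $\im(d)$. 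Since $d$ and $s$ are both odd, this is an odd linear isomorphism. The main obstacle, if any, is just bookkeeping: the argument is a matter of carefully cycling through the seven defining identities in the correct order, and the only nontrivial conceptual input is noticing that the chain-map conditions force $d\circ i=0$ and $p\circ d=0$.
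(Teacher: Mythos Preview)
Your argument is correct and is precisely the kind of unwinding the paper has in mind: the paper's own proof consists of the single sentence ``The results follow directly from the properties of the SDR,'' and your proposal simply spells out those consequences using the homotopy identity, the side conditions, and the chain-map identities $d\circ i=0$, $p\circ d=0$ forced by $\homology(V)$ having zero differential.
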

\begin{proof}
The results follow directly from the properties of the SDR.
\end{proof}

\begin{definition}
Let $V=\im (i)\oplus\im (d)\oplus\im (s)$ be the decomposition of $V$ arising from a SDR onto its homology. Let $\langle,\rangle$ be the canonical symplectic pairing on $\im (d)\oplus\im (s)$ defined as follows: let $e_i$ be a basis for $\im (s)$ (so that $d e_i$ forms a basis for $\im (d)$) and set $\langle e_i , d e_j \rangle = \delta_{ij}$, with $\im (s)$ and $\im (d)$ isotropic subspaces.
\end{definition}

This definition is independent of the choice of basis.

The next two propositions are formal consequences of the definitions.

\begin{proposition}\label{prop_dual_sdr}
Let $U$ and $V$ be super vector spaces. Given a SDR $(i,p,s)$ from $V$ to $U$, a SDR from $V^\ast$ to $U^\ast$ is given by $(p^\ast, i^\ast ,s^\ast )$. \qed
\end{proposition}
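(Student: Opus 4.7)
The proof is almost entirely formal: one simply applies the contravariant duality functor $(-)^\ast$ to each of the defining relations of the SDR $(i,p,s)$. My plan is to verify each of the four conditions of Definition~\ref{def_SDR} for the triple $(p^\ast, i^\ast, s^\ast)$ in turn.

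First, I would note that since $i$ and $p$ are even dg morphisms and $s$ is odd and linear, their duals $p^\ast$, $i^\ast$ and $s^\ast$ automatically have the required parities, and $p^\ast$ and $i^\ast$ commute with the induced differential $d^\ast$ on duals. The dual differential satisfies $(f\circ g)^\ast = g^\ast \circ f^\ast$ and $\id^\ast = \id$, and under the pseudo-compact conventions fixed in the notation section one has $(V^\ast)^\ast \cong V$, so nothing is lost upon dualising.

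Next, I would dualise the original identities one at a time. From $p\circ i = \id_U$ I obtain $i^\ast \circ p^\ast = \id_{U^\ast}$, giving the section/retract condition for the dual SDR. From the chain homotopy $d\circ s + s \circ d = \id_V - i\circ p$ I obtain
\[
s^\ast \circ d^\ast + d^\ast \circ s^\ast = \id_{V^\ast} - p^\ast \circ i^\ast,
\]
which is the required chain homotopy for $(p^\ast, i^\ast, s^\ast)$. Finally, dualising the side conditions $s\circ i = 0$, $p\circ s = 0$, and $s^2 = 0$ yields $i^\ast \circ s^\ast = 0$, $s^\ast \circ p^\ast = 0$, and $(s^\ast)^2 = 0$, respectively.

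There is no real obstacle here; the only point requiring slight care is that the dual is the topological (pseudo-compact) dual, but this is exactly the setting in which $(-)^\ast$ is a symmetric monoidal equivalence, so the above manipulations are valid without finiteness hypotheses. Hence $(p^\ast, i^\ast, s^\ast)$ satisfies every clause of Definition~\ref{def_SDR}, and the proposition follows.
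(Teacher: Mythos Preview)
Your proof is correct and matches the paper's treatment: the paper states explicitly that this proposition is a ``formal consequence of the definitions'' and marks it with \qed\ in the statement, giving no further argument. Your verification by dualising each identity of Definition~\ref{def_SDR} is exactly the routine check the paper is leaving to the reader.
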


\begin{proposition}\label{prop_sdr_compatible_with_sums}
Let $U_1$, $U_2$, $V_1$, and $V_2$ be super vector spaces. Suppose we have a SDR from $V_1$ to $U_1$ given by $(i_1 ,p_1,s_1)$ and another SDR from $V_2$ to $U_2$ given by $(i_2 , p_2 , s_2)$. A SDR from $V_1\oplus V_2$ to $U_1 \oplus U_2$ is given by $(i_1\oplus i_2 , p_1 \oplus p_2 , s_1 \oplus s_2)$. \qed
\end{proposition}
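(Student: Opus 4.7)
The plan is to verify each of the axioms in Definition~\ref{def_SDR} directly for the componentwise direct sum maps, using the fact that composition distributes over direct sums of linear maps: $(f_1 \oplus f_2) \circ (g_1 \oplus g_2) = (f_1 \circ g_1) \oplus (f_2 \circ g_2)$ whenever the compositions make sense. I would first note that the differential on $V_1 \oplus V_2$ is itself a direct sum $d = d_1 \oplus d_2$, and that $i_1 \oplus i_2$ and $p_1 \oplus p_2$ are even chain maps (being direct sums of even chain maps), while $s_1 \oplus s_2$ is odd (being a direct sum of odd maps).

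Next I would check the three bulleted conditions in turn. For the first, $(p_1 \oplus p_2) \circ (i_1 \oplus i_2) = (p_1 \circ i_1) \oplus (p_2 \circ i_2) = \id_{U_1} \oplus \id_{U_2} = \id_{U_1 \oplus U_2}$. For the homotopy condition, compute
\[
d \circ (s_1 \oplus s_2) + (s_1 \oplus s_2) \circ d = (d_1 s_1 + s_1 d_1) \oplus (d_2 s_2 + s_2 d_2) = (\id_{V_1} - i_1 p_1) \oplus (\id_{V_2} - i_2 p_2),
\]
which equals $\id_{V_1 \oplus V_2} - (i_1 \oplus i_2)(p_1 \oplus p_2)$. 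The side conditions $s \circ i = 0$, $p \circ s = 0$, $s^2 = 0$ then follow from the same distributivity, each component vanishing by assumption.

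There is no real obstacle here; the statement is a routine bookkeeping exercise, and the only point that warrants any comment is confirming that the direct sum differential on $V_1 \oplus V_2$ is the one induced from the summands, so that the identity $d \circ (s_1 \oplus s_2) + (s_1 \oplus s_2) \circ d$ splits as above. I would therefore keep the written proof short, essentially remarking that all three axioms split into their two-summand analogues and invoking the hypothesis on each summand.
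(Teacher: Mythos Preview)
Your proposal is correct and matches the paper's approach: the paper simply declares this proposition (together with the preceding one) to be a ``formal consequence of the definitions'' and omits the proof entirely, which is exactly the routine componentwise verification you outline.
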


Clearly, Proposition~\ref{prop_sdr_compatible_with_sums} generalises to arbitrary direct sums.

\subsection{Integration}

Any finite-dimensional super vector space comes equipped with the canonical translation invariant volume form. Let $V$ be an exact super vector space (in the sense that $\homology (V)=0$) with canonical volume form $\mu$. Further, let $V=L_1\oplus L_2$ and equip it with the canonical symplectic pairing so that $L_1$ and $L_2$ are Lagrangian subspaces and $d\colon L_1 \to L_2$ is an isomorphism of odd degree.

\begin{proposition}
The quadratic form $\sigma \colon L_1 \to k$ given by $x\mapsto \langle x, dx \rangle$ is non-degenerate.
\end{proposition}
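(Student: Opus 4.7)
The strategy is to verify non-degeneracy by computing in the basis adapted to the SDR. I would fix a homogeneous basis $\{e_1, \dots, e_n\}$ of $L_1 = \im(s)$. The preceding proposition shows that $d \colon L_1 \to L_2$ is an isomorphism (of odd degree), so $\{de_1, \dots, de_n\}$ is a basis of $L_2 = \im(d)$, and together with the $e_i$ these yield a basis of $V$. By the definition of the canonical symplectic pairing, $\langle e_i, de_j \rangle = \delta_{ij}$ with $L_1$ and $L_2$ isotropic, which pins down the full Gram matrix of $\langle,\rangle$.

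The polar bilinear form of $\sigma$ is
\[
b(x,y) = \sigma(x+y) - \sigma(x) - \sigma(y) = \langle x, dy \rangle + (-1)^{\epsilon(x,y)} \langle y, dx \rangle,
\]
where $\epsilon(x,y)$ is a Koszul sign determined by the parities of $x$ and $y$. Evaluating on basis elements yields $b(e_i, e_j) = \langle e_i, de_j \rangle + (-1)^{\epsilon} \langle e_j, de_i \rangle = 2\delta_{ij}$, so the Gram matrix of $b$ in this basis is $2I$. This is manifestly non-degenerate, establishing the claim.

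A more structural route to the same conclusion, which sidesteps parity arithmetic, runs as follows: since $L_1$ and $L_2$ are complementary Lagrangians, the restriction $L_1 \otimes L_2 \to k$ of $\langle,\rangle$ is a perfect pairing; composing with the linear isomorphism $d \colon L_1 \to L_2$ produces an isomorphism $L_1 \to L_1^\ast$ sending $x \mapsto \langle x, d(-)\rangle$, which represents the bilinear form $(x,y) \mapsto \langle x, dy\rangle$. The polar form of $\sigma$ is a nonzero scalar multiple of this bilinear form, and hence is itself an isomorphism. The only real subtlety in either approach is tracking the Koszul signs in the polarization when $L_1$ has elements of both parities, but they only influence off-diagonal Gram entries which already vanish, so the argument is unaffected.
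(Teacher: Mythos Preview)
Your proposal is correct and aligns with the paper's approach: the paper's proof is the single sentence ``$\sigma$ is non-degenerate as a direct consequence of the definition of $\langle,\rangle$,'' and both of your arguments---the explicit Gram-matrix computation yielding $2I$ and the structural argument via the perfect pairing $L_1 \otimes L_2 \to k$ composed with the isomorphism $d$---are simply careful unpackings of that one line. You have supplied considerably more detail than the paper deems necessary, but there is no divergence in method.
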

\begin{proof}
$\sigma$ is non-degenerate as a direct consequence of the definition of $\langle,\rangle$.
\end{proof}

\begin{proposition}
Let $f\in\hat{S}\Pi V^*$ be any function, then
\[
\int_{L_1} f e^{\frac{-\sigma}{2\hbar}} \mu
\]
is well-defined and can be given by the combinatorial sum over (possibly disconnected) stable graphs.
\end{proposition}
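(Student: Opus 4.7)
The plan is to interpret the integral perturbatively via formal Gaussian moments and then re-express the resulting combinatorial data as a sum over Feynman graphs. First, I would use the non-degeneracy of $\sigma$ on $L_1$ (established in the preceding proposition) to define the \emph{propagator} $P\in L_1\otimes L_1$ as the inverse of $\sigma$. Restricting $f\in\hat{S}\Pi V^\ast$ to $L_1$ and expanding polynomially gives $f|_{L_1}=\sum_{n\geq 0}f_n$ with $f_n\in S^nL_1^\ast$, and the integral is then \emph{defined}, by $\mathbb{R}[|\hbar|]$-linear extension, as
\[
\int_{L_1} f\,e^{-\sigma/(2\hbar)}\mu \;:=\; Z_0 \sum_{n\geq 0}\langle f_n\rangle_\sigma,
\]
where $Z_0:=\int_{L_1}e^{-\sigma/(2\hbar)}\mu$ is the formal Gaussian normalisation (a specific element of an appropriate Laurent completion of $\mathbb{R}[|\hbar|]$) and $\langle-\rangle_\sigma$ denotes the normalised Gaussian expectation on $L_1$.

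Next I would apply the supersymmetric Wick theorem to compute each moment: $\langle x_{i_1}\cdots x_{i_n}\rangle_\sigma$ vanishes unless $n=2k$, in which case it equals $\hbar^k$ times the signed sum over perfect matchings of the $n$ factors, each matched pair contributing a propagator $P$, with signs dictated by the odd symplectic pairing $\langle,\rangle$. Passing to the graphical picture, a monomial becomes a vertex whose half-edges are labelled by its factors, a perfect matching becomes the edge data of a graph, and summing over the polynomial expansion of $f$ produces precisely the sum over all possibly-disconnected graphs, weighted by $1/|\Aut(\Gamma)|$ once one accounts for the overcounting of pairings that give the same combinatorial type.

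Finally, to justify the adjective \emph{stable} and establish well-definedness, I would observe that the purely quadratic part of $f$ can be absorbed into the Gaussian and the linear part integrates to zero, so that only graphs whose vertices have sufficient valence (equivalently, whose vertices satisfy the weight/stability inequality in the modular-operadic sense of \cite{getzler_kapranov_modular_operads}) produce nontrivial new contributions. Combined with the $\hbar$-grading --- each edge contributes a positive power of $\hbar$ via $P$ --- the stability condition bounds, for each fixed $g$, the number of isomorphism classes of stable graphs contributing to the coefficient of $\hbar^g$, giving a well-defined element of $\mathbb{R}[|\hbar|]$ (after absorbing $Z_0$). The main obstacle I expect is the careful bookkeeping of Koszul signs when half-edges are permuted and propagators are inserted, since the oddness of the symplectic pairing makes these signs delicate; this is however the standard odd Wick calculus and should present no conceptual difficulty.
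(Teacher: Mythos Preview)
Your outline is correct and is exactly the standard perturbative Wick/Feynman-diagram argument that the paper is invoking: the paper's own proof consists of the single sentence ``This is the standard Feynman diagrams approach, \cite{braun_maunder,costello} for example'' with no further detail, and what you have written is precisely the content one finds in those references. One small caveat: your justification of the word \emph{stable} via ``absorb the quadratic part into the Gaussian, the linear part integrates to zero'' is phrased for an integrand of the form $e^{m/\hbar}$ rather than for a bare $f$, but since that is exactly how the proposition is used downstream (Theorem~\ref{thm_braun_maunder}) and the paper itself is informal on this point, this is not a genuine gap.
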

\begin{proof}
This is the standard Feynman diagrams approach, \cite{braun_maunder,costello} for example.
\end{proof}

The exact definition of a stable graph is not important for this paper. It is only important to know the restriction to `stable trees' (which is achieved by setting $\hbar=0$); we are restricting to trees whose vertices are at least tri-valent. More details are given in Section~\ref{sec_HTT}. The integral can also be written as an operator \cite{costello}.

\begin{proposition}\label{prop_BV_stokes}
Let $f\in\hat{S}\Pi V^*$ be any function, then
\[
\int_{L_1} \Delta \left(f e^{\frac{-\sigma}{2\hbar}}\right) \mu=0.
\]
\end{proposition}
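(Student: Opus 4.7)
The plan is to translate the statement into an honest integration-by-parts computation in Darboux coordinates. Choose a basis $\{e_i\}$ of $\Pi(\operatorname{im} s)$ with dual basis $\{e_i^\ast\}$ and use the canonical symplectic pairing to identify $\Pi(\operatorname{im} d)$ with $\Pi(\operatorname{im} s)^\ast$. Write $x_i \in \Pi V^\ast$ for the coordinates on $L_1$ (dual to the $e_i$) and $\xi_i$ for the coordinates on $L_2$, arranged so that the pair $(x_i,\xi_i)$ is Darboux, i.e.\ $\Delta = \sum_i \partial_{x_i}\partial_{\xi_i}$ in agreement with Definition~\ref{def_BV_laplacian}. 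With respect to these coordinates, $\sigma$ is a quadratic function of the $x_i$ alone (it is the pullback of the pairing to $L_1$), so $\partial_{\xi_i}\sigma = 0$ for all $i$.

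First I would verify that $e^{-\sigma/(2\hbar)}$ is $\Delta$-closed. Since $\sigma$ depends only on the $x_i$, both $\Delta\sigma = 0$ and the odd bracket $\{\sigma,\sigma\}$ vanish (the antibracket pairs $x$-variables only with $\xi$-variables). By the standard identity for $\Delta$ on an exponential this gives $\Delta\bigl(e^{-\sigma/(2\hbar)}\bigr)=0$. Next, the Leibniz rule relating $\Delta$ to the antibracket yields
\[
\Delta\bigl(f\, e^{-\sigma/(2\hbar)}\bigr) \;=\; \Delta(f)\, e^{-\sigma/(2\hbar)} \;+\; (-1)^{|f|}\bigl\{f,\, e^{-\sigma/(2\hbar)}\bigr\},
\]
and the second term expands, using $\{f,\sigma\}=\sum_i (\partial_{\xi_i} f)\,\partial_{x_i}\sigma$ up to signs (since $\sigma$ has no $\xi$-dependence), to $-\tfrac{1}{2\hbar}(-1)^{|f|}\{f,\sigma\}\,e^{-\sigma/(2\hbar)}$.

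The key step is then to restrict to $L_1$ (set $\xi_i=0$) and recognise the restriction as a total $x$-derivative. Writing $g_i := (\partial_{\xi_i} f)|_{L_1}$, which is a function of the $x_i$ alone, a direct calculation gives
\[
\Delta\bigl(f\, e^{-\sigma/(2\hbar)}\bigr)\Big|_{L_1} \;=\; \sum_i \partial_{x_i}\!\bigl(g_i\, e^{-\sigma/(2\hbar)}\bigr),
\]
because $(\Delta f)|_{L_1} = \sum_i \partial_{x_i} g_i$ and $\partial_{x_i}\bigl(e^{-\sigma/(2\hbar)}\bigr) = -\tfrac{1}{2\hbar}(\partial_{x_i}\sigma)\, e^{-\sigma/(2\hbar)}$ matches the $\tfrac{1}{\hbar}\{f,\sigma\}|_{L_1}$ term exactly.

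Finally I would invoke the standard integration-by-parts property of the formal Gaussian integral $\int_{L_1}(\cdot)\, e^{-\sigma/(2\hbar)}\mu$: for any polynomial $h$ in the $x_i$, the integral of a total divergence $\sum_i \partial_{x_i}(h_i\, e^{-\sigma/(2\hbar)})$ is zero, which is immediate from Wick's theorem (equivalently, the combinatorial sum over stable graphs in the preceding proposition) since each contributing diagram involves a derivative that becomes a boundary term. Applied to our $g_i$, this gives the required vanishing.

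The main obstacle I anticipate is bookkeeping the super-signs when expanding $\Delta$ on the product $f\, e^{-\sigma/(2\hbar)}$ and when pushing $\partial_{x_i}$ past the odd $\partial_{\xi_i}$; however, since $\sigma$ and $e^{-\sigma/(2\hbar)}$ are both even and involve only $x$-variables, all of these signs conspire to match, so the identification of the restricted Laplacian with an $x$-divergence is clean.
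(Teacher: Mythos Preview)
Your proposal is correct and is precisely the standard argument for the linear BV Stokes' theorem. The paper does not actually give a proof of this proposition; it simply declares the result standard and cites \cite{braun_maunder,costello,schwarz}. What you have written is essentially the proof one finds in those references: in Darboux coordinates with $L_1$ given by $\xi=0$, the restriction of $\Delta F$ to $L_1$ is the $x$-divergence $\sum_i \partial_{x_i}\bigl((\partial_{\xi_i}F)|_{\xi=0}\bigr)$, and the translation-invariant measure $\mu$ kills divergences.

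Two minor remarks. First, in the setup of this proposition $V$ is already assumed exact with a fixed decomposition $V=L_1\oplus L_2$, so there is no $\operatorname{im} s$ to speak of; just take a basis of $L_1$ directly. Second, the detour through the BV Leibniz rule and the antibracket is unnecessary: once you observe that $\sigma$ has no $\xi$-dependence, the one-line computation $\partial_{\xi_i}\bigl(f\,e^{-\sigma/(2\hbar)}\bigr)=(\partial_{\xi_i}f)\,e^{-\sigma/(2\hbar)}$ already gives the divergence form after applying $\partial_{x_i}$ and restricting. Your ``direct calculation'' paragraph is the whole proof; the preceding Leibniz expansion is redundant.
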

\begin{proof}
This result is standard see \cite{braun_maunder,costello,schwarz}.
\end{proof}

\begin{definition}\label{def_integration}
Consider a super vector space $V$ (not necessarily exact) and a SDR onto its homology $\homology (V)$. Moreover, suppose that $V$ and $\homology (V)$ are equipped with odd symmetric forms and the SDR is compatible with these forms. The integral of $f\in \hat{S}\Pi V^\ast$ over $\im(s)$ is given by 
\[
\left(\id_{\homology (W)}\otimes \int_{\im(s)}\right) (f)\in \hat{S}\Pi \homology (V)^\ast.
\]
\end{definition}

\begin{proposition}
Consider a SDR $(i, p, s)$ of $V$ onto $\homology (V)$ as in Definition~\ref{def_integration}. Let $\iota_{\mathfrak{h}} \colon \mathfrak{h} [ \Pi \homology (V)] \to \mathfrak{h} [ \Pi V ]$ be the dgla morphism given by $f \mapsto ifp$. This morphism is a filtered quasi-isomorphism, and in particular induces a bijection of MC moduli sets on $\mathfrak{h} [ \Pi \homology (V)]$ and $\mathfrak{h} [ \Pi V ]$.
\end{proposition}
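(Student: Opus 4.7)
The map $\iota_\mathfrak{h}$ is most naturally read as the pullback by the projection, i.e., the continuous algebra extension of $(\Pi p)^* \colon \Pi\homology(V)^* \to \Pi V^*$ to a map $\hat{S}\Pi\homology(V)^*[|\hbar|] \to \hat{S}\Pi V^*[|\hbar|]$, restricted to the weight-$>2$ subspaces on either side. My plan is to verify three claims in sequence: that $\iota_\mathfrak{h}$ is a well-defined dgla morphism into $\mathfrak{h}[\Pi V]$; that it is a filtered quasi-isomorphism for the weight filtration; and hence that it induces a bijection on MC moduli sets by the standard descent result already cited in the paper.

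For the first claim, compatibility of $\iota_\mathfrak{h}$ with the differential $d$ is immediate, since $p$ is a chain map and $\homology(V)$ carries the zero differential. Compatibility with $\hbar\Delta$ and with the antibracket follows from the hypothesis that the SDR is compatible with the odd symmetric forms, so that $(\Pi p)^*$ transports the odd symplectic structure on $\Pi\homology(V)$ from that on $\Pi V$. Finally, $\iota_\mathfrak{h}$ visibly preserves the weight $2g+n$ of a monomial $f\hbar^g$, so the weight-$>2$ subspace lands in the weight-$>2$ subspace.

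For the filtered quasi-isomorphism claim, set $F_p := \prod_{w\geq p}(\cdot)_w$, indexing by total weight. The antibracket on $\mathfrak{h}[\Pi V]$ has weight $-2$ (since $\Delta$ drops polynomial degree by $2$ while preserving $\hbar$-degree) and all nonzero elements have weight $\geq 3$, so $[F_{p_1},F_{p_2}] \subset F_{p_1+p_2-2} \subset F_{\max(p_1,p_2)+1}$; this is the pronilpotent filtration and $\iota_\mathfrak{h}$ respects it. On a fixed weight-$w$ piece, further filter by $\hbar$-degree: the associated graded differential is just $d$ (since $\hbar\Delta$ strictly raises the $\hbar$-filtration), and on the $E_0$-page $\iota_\mathfrak{h}$ reduces to the symmetric-tensor extensions $\hat{S}^n(\Pi p)^* \colon \hat{S}^n\Pi\homology(V)^* \to \hat{S}^n\Pi V^*$, with $n = w - 2g$. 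Each of these is a quasi-isomorphism of chain complexes in characteristic zero by a standard K\"unneth-type argument; the contracting homotopy is built directly from the datum $s$ of the SDR, via its extension to $\hat{S}^n\Pi V^*$ by a derivation-style formula. Since each weight-graded piece involves only finitely many $\hbar$-degrees, the $\hbar$-filtration is finite and the associated spectral sequence converges, yielding the required quasi-isomorphism on every $F_p/F_{p+1}$.

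The final step then invokes the cited result that a filtered quasi-isomorphism of pronilpotent dglas induces a bijection on MC moduli (see \cite{getzler}, or via Koszul duality in \cite{laz_markl,maunder_unbased_rat_homo}); this has already been used earlier in the proof of Proposition~\ref{prop_unimodular_structure_bijection}. The one genuine obstacle in the argument is checking in the first step that $\iota_\mathfrak{h}$ truly commutes with $\Delta$ and with the antibracket; once that is dispatched using the SDR-compatibility of the symplectic forms, the spectral sequence on each weight-graded piece and the descent to MC moduli are routine.
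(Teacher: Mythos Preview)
Your argument is correct. The paper itself gives no proof at all: its entire justification is the sentence ``This is exactly Proposition~3.4 of \cite{braun_maunder},'' deferring the content to the cited reference. What you have written is, in effect, a reconstruction of that argument: interpret $\iota_{\mathfrak{h}}$ as the pullback along the projection $p$, check it intertwines $d$, $\hbar\Delta$, and the antibracket using the orthogonal splitting $\Pi V \cong \Pi\homology(V)\oplus(\text{acyclic symplectic complement})$ afforded by the form-compatible SDR, and then run a double filtration (weight, then $\hbar$-degree) to reduce the quasi-isomorphism check to the K\"unneth statement for $\hat{S}^n(\Pi p)^*$.

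Two minor remarks. First, your reading of the paper's notation $f\mapsto ifp$ as the pullback $(\Pi p)^*$ is the correct one; the literal string $ifp$ is informal. Second, your observation that $\hbar\Delta$ preserves total weight (it lowers polynomial degree by $2$ and raises $\hbar$-degree by $1$) is exactly why the secondary $\hbar$-filtration on each fixed weight piece is needed, and why its finiteness there makes the spectral sequence collapse argument clean. Both points are handled correctly in your write-up, so there is nothing to repair --- you have simply supplied the details that the paper outsourced.
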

\begin{proof}
This is exactly Proposition~3.4 of \cite{braun_maunder}.
\end{proof}

The following theorem is the combination of several results of \cite{braun_maunder} and similar results can be found in \cite{doubek_jurco_pulmann}.

\begin{theorem}[Braun and Maunder]\label{thm_braun_maunder}
Consider a SDR $(i, p, s)$ of $V$ onto $\homology (V)$ as in Definition~\ref{def_integration}. The morphism $\rho \colon \MC \left(\mathfrak{h}[\Pi V]\right) \to \MC \left(\mathfrak{h}[\Pi \homology (V)]\right)$ given by
\[
\rho (m) = \hbar \log \left(\int_{L_1} e^{\frac{m}{\hbar}} e^{\frac{-\sigma}{2\hbar}} \mu\right)
\]
is well-defined and can presented as the combinatorial sum over connected stable graphs
\[
\sum_{G} \frac{F(G)}{|\Aut(G)|},
\]
where $F(G)$ is a function belonging to $\hat{S}\Pi \homology (V)^* [|\hbar|]$ constructed from the stable graph $G$ and $|\Aut(G)|$ denotes the order of the automorphism group of the stable graph $G$.

Moreover, $\iota_{\mathfrak{h}} \circ \rho (m)$ is equivalent to $m$ (as MC elements in $\mathfrak{h}[\Pi V]$). \qed
\end{theorem}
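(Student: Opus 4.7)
The strategy is to handle the three assertions of the theorem separately: (a) $\rho$ maps MC elements to MC elements, (b) the integral equals the stated Feynman-graph sum, and (c) $\iota_{\mathfrak{h}}\circ\rho(m)$ is MC-equivalent to $m$. Throughout I would use the decomposition $V = \im(i)\oplus\im(d)\oplus\im(s)$ with $L_1=\im(s)$ and $L_2=\im(d)$, so that $\sigma$ is the canonical non-degenerate quadratic form on $L_1$ induced by the pairing of $L_1$ and $L_2$.

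For part (a), I would start from the QME in the equivalent form $(d+\hbar\Delta)e^{m/\hbar}=0$ in $\hat{S}\Pi V^\ast[|\hbar|]$ and aim to show the same equation holds after integration. The key mechanism is Proposition~\ref{prop_BV_stokes}: integration against $e^{-\sigma/(2\hbar)}\mu$ over the Lagrangian $L_1$ kills $\Delta$-exact functions. One then argues that $d$ commutes (with the appropriate signs) with integration over $L_1$ because $L_1$ is stable under the relevant projections and $d\colon L_1\to L_2$ is used only as an identification, not as a differential on $L_1$. Combining these facts yields $(d+\hbar\Delta)\int_{L_1} e^{m/\hbar}e^{-\sigma/(2\hbar)}\mu = 0$, and taking $\hbar\log$ converts this identity back into the MC equation for $\rho(m)\in\mathfrak{h}[\Pi\homology(V)]$.

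For part (b), I would expand $e^{m/\hbar}$ as a formal power series. The $L_1$-coordinates in each monomial get integrated out against the Gaussian $e^{-\sigma/(2\hbar)}\mu$ and, by Wick's theorem, produce a sum of contractions by the inverse quadratic form, which is exactly the propagator constructed from $s$. The $\homology(V)$-coordinates remain as external legs, and the $L_2$-coordinates serve a bookkeeping role through the pairing. This yields a sum over (possibly disconnected) graphs whose vertices correspond to the summands of $m$; the weight condition defining $\mathfrak{h}[V]$ forces each vertex to have weight $>2$, which is precisely the stability condition. Applying $\hbar\log$ then restricts the sum to connected graphs, and the symmetry factor $1/|\Aut(G)|$ arises from the standard bookkeeping in the Wick expansion.

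For part (c), the hardest part, I would combine the preceding proposition (that $\iota_{\mathfrak{h}}$ is a filtered quasi-isomorphism inducing a bijection on MC moduli) with an explicit homotopy produced by rescaling the propagator. Concretely, introduce a parameter $t$ interpolating $s \mapsto ts$ to obtain $h(t)\in \MC(\mathfrak{h}[\Pi V][t,dt])$ with $h(0)=m$ and $h(1)=\iota_{\mathfrak{h}}(\rho(m))$. Verifying that this path lies in $\MC$ reduces once more to a parametrised version of BV Stokes' theorem, with the $dt$-component supplying the gauge correction. The main obstacle is precisely this verification: one must check simultaneously that the parametrised integrals remain $\Delta$-closed along the path and that the $dt$-term provides the exact correction identifying $h(1)$ with $\iota_{\mathfrak{h}}\circ\rho(m)$. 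Once established, this together with Theorem~\ref{thm_schlessinger_stasheff} delivers the claimed equivalence.
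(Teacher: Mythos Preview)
The paper does not actually prove this theorem: it is stated with a trailing \qed\ and introduced as ``the combination of several results of \cite{braun_maunder}'', so there is no in-paper argument to compare your proposal against. Your outline for parts (a) and (b) is the standard route (BV Stokes for closedness, Wick/Feynman expansion for the graph sum) and is consistent with how such results are established in the literature.

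For part (c), note that the argument pattern the present paper uses for its own analogous results (Theorems~\ref{thm_minimal_model_for_unimodular} and~\ref{thm_minimal_model_for_general}) is different from yours and cleaner: rather than building an explicit homotopy by rescaling the propagator, one checks that $\rho\circ\iota_{\mathfrak{h}}$ is the identity on $\MC(\mathfrak{h}[\Pi\homology(V)])$, and then invokes the fact that $\iota_{\mathfrak{h}}$ is a filtered quasi-isomorphism (hence a bijection on MC moduli) to conclude that $\iota_{\mathfrak{h}}\circ\rho$ is the identity on moduli. Your rescaling idea is plausible in spirit, but as written the endpoints are off: setting $t=0$ in $s\mapsto ts$ does not recover $m$ itself---with zero propagator the Feynman sum collapses to the restriction of $m$ to $\homology(V)$, not to $m$ on $V$. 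An explicit homotopy of the kind you sketch would need to be set up on $V$ (e.g.\ by deforming the Lagrangian or integrating over a one-parameter family), and getting both endpoints right is exactly the delicate point you flag as the ``main obstacle''. The bijection argument sidesteps this entirely.
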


In other words, Theorem~\ref{thm_braun_maunder} states that the integral of a quantum \Linf-algebra structure gives the minimal model of that structure. We wish to make use of this theorem in transferring non-quantum structures using the doubling constructions.

In the proof of Theorem~\ref{thm_braun_maunder}, the following fact is used (Proposition~2.19 of \cite{braun_maunder}). This fact is also useful in this paper.

\begin{proposition}\label{prop_integration_respects_homotopy}
Let $V$ be a super vector space equipped with a SDR onto its homology $\homology (V)$. Given a homotopy $h(t)$ of quantum \Linf-algebra structures $m_1$ and $m_2$ on $V$, the function
\[
\tilde{h}(t) = \hbar \log \left( \int_{L_1} e^{\frac{h(t)}{\hbar}} e^{\frac{-\sigma}{2\hbar}} \mu\right)
\]
defines a homotopy of $\rho (m_1)$ and $\rho (m_2)$. (Note that integration has been extended $t,dt$-linearly.) \qed
\end{proposition}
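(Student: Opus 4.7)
The plan is to reduce to Theorem~\ref{thm_braun_maunder} by extending scalars along the cdga $k[t,dt]$. By Theorem~\ref{thm_braun_maunder}, $\rho$ is presented as a sum over connected stable graphs, each term being a polynomial in the coefficients of its input. Therefore $\rho$ makes sense on elements of $\mathfrak{h}[\Pi V]\hat{\otimes} k[t,dt]$ by treating $t$ and $dt$ as scalars; this is precisely the $t,dt$-linear extension mentioned in the statement, and applied to $h(t)$ produces exactly $\tilde{h}(t)$.

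To see that $\tilde{h}(t)$ lies in $\MC(\mathfrak{h}[\Pi\homology(V)][t,dt])$, observe that $\mathfrak{h}[\Pi V][t,dt]$ is the base extension of $\mathfrak{h}[\Pi V]$ along $k\to k[t,dt]$: its bracket, BV-Laplacian, and differential are $k[t,dt]$-linear extensions of the originals, together with a term coming from the differential of $k[t,dt]$ that commutes with every $k[t,dt]$-linear operation. The proof of Theorem~\ref{thm_braun_maunder} rests only on the BV Stokes' theorem (Proposition~\ref{prop_BV_stokes}) and formal manipulations of the Feynman graph expansion, both of which are natural under scalar extension by any cdga. Applied in the base-changed setting, the theorem yields the desired MC property for $\tilde{h}(t)$.

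Finally, the boundary conditions follow because specialisation at $t=0$ and at $t=1$ (setting $dt=0$) are cdga morphisms $k[t,dt]\to k$, and the $t,dt$-linear extension of $\rho$ is natural with respect to such morphisms; consequently $\tilde{h}(0)=\rho(h(0))=\rho(m_1)$ and $\tilde{h}(1)=\rho(h(1))=\rho(m_2)$. The main point requiring care is the base-change argument itself: one must confirm that Theorem~\ref{thm_braun_maunder} is genuinely natural in the ground cdga, so that passing from $k$ to $k[t,dt]$ preserves everything. Since its proof is essentially a universal Feynman diagram expansion with the BV Stokes' theorem controlling the MC identity term by term, this naturality is built into the construction rather than being an additional hypothesis to verify.
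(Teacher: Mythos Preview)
The paper does not give its own proof of this proposition: the statement ends with \qed\ and is attributed to Proposition~2.19 of \cite{braun_maunder}. Your base-change argument is correct and is precisely the standard route to such a statement: the Feynman-graph presentation of $\rho$ makes the $k[t,dt]$-linear extension well defined, the BV Stokes' identity and the graph expansion are natural under extension of scalars so that the extended $\rho$ still carries MC elements to MC elements (the extra $d_{dR}$-term in the differential commutes with integration over $L_1$ because the latter is $k[t,dt]$-linear), and specialisation at $t=0,1$ recovers $\rho(m_1)$ and $\rho(m_2)$. There is nothing in the paper to compare against beyond the citation.
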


\subsection{Integrating doubles}

Let $V$ be a super vector space with a SDR $(i,p,s)$ onto its homology. Note that we do not assume that $V$ is equipped with any bilinear form, because we intend to double $V$. Proposition~\ref{prop_dual_sdr} and Proposition~\ref{prop_sdr_compatible_with_sums} allow us to combine the SDR of $V$ onto $\homology (V)$ with the SDR of $\Pi V^\ast$ onto $\homology (\Pi V^\ast)$ given by $(\Pi p^\ast,\Pi i^\ast,\Pi s^\ast)$ to obtain the Hodge decomposition of the direct sum $V\oplus \Pi V^\ast$ given by
\[
V\oplus \Pi V^\ast = (\homology (V) \oplus \homology (\Pi V^\ast)) \oplus (L_s \oplus \Pi D_s^\ast) \oplus (D_s \oplus \Pi L_s^\ast),
\]
such that $\overline{d}=d\oplus \Pi d^\ast \colon L_s \oplus \Pi D_s^\ast \to D_s \oplus \Pi L_s^\ast$ is an isomorphism with inverse $\overline{s}=s\oplus \Pi s^\ast$. Let $\mathcal{L}_{od,s}$ denote the direct sum $L_s \oplus \Pi D_s^\ast$.

\begin{proposition}
The quadratic form $\sigma_{od} \colon \mathcal{L}_{od,s} \to k$ given by $x\mapsto \langle x,\overline{d} \xi_x \rangle$ is non-degenerate.
\end{proposition}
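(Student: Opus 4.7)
The plan is to adapt the proof of the analogous even-case proposition and reduce non-degeneracy of $\sigma_{od}$ to two structural facts already established in the construction: that $\overline{d}$ restricts to an isomorphism $\mathcal{L}_{od,s} \xrightarrow{\sim} D_s \oplus \Pi L_s^\ast$, and that the odd symmetric form $\langle,\rangle$ on $V\oplus\Pi V^\ast$ pairs $V$ with $\Pi V^\ast$ via the canonical (shifted) evaluation pairing, with both summands isotropic.

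First I would polarize $\sigma_{od}$ to the bilinear form $B(x,y) := \langle x, \overline{d}(y) \rangle$ on $\mathcal{L}_{od,s}$. Since $\overline{d}$ is a bijection from $\mathcal{L}_{od,s}$ onto $D_s \oplus \Pi L_s^\ast$, the substitution $z := \overline{d}(y)$ identifies non-degeneracy of $B$ with non-degeneracy of the pairing
\[
\langle,\rangle \colon (L_s \oplus \Pi D_s^\ast) \times (D_s \oplus \Pi L_s^\ast) \to k.
\]
Writing $x = x_1 + x_2$ with $x_1 \in L_s \subset V$ and $x_2 \in \Pi D_s^\ast \subset \Pi V^\ast$, and $z = z_1 + z_2$ analogously with $z_1 \in D_s$ and $z_2 \in \Pi L_s^\ast$, the isotropy of $V$ and of $\Pi V^\ast$ kills the components $\langle x_1, z_1 \rangle$ and $\langle x_2, z_2 \rangle$. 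The surviving terms $\langle x_1, z_2 \rangle$ and $\langle x_2, z_1 \rangle$ are each non-degenerate because they are precisely the canonical evaluation pairings of $L_s$ against $L_s^\ast$ and of $D_s$ against $D_s^\ast$, modulo the parity reversion $\Pi$ and a sign fixed by the definition of $\langle,\rangle$ on the odd double.

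The argument is therefore a direct consequence of the structural definitions, paralleling the even case where the result was recorded as ``a direct consequence of the definition of $\langle,\rangle$''. The only real obstacle is bookkeeping: tracking the signs introduced by the odd symmetry of $\langle,\rangle$ and by the parity reversion $\Pi$, and confirming that no degenerate cancellation occurs once these signs are inserted. Since the two surviving block pairings remain separately non-degenerate for any sign convention, the conclusion follows at once.
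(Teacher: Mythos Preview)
Your proposal is correct and is precisely an unpacking of the paper's own proof, which reads in full: ``The statement follows from the definitions of $\overline{d}$ and $\langle,\rangle$.'' Your polarization and block decomposition simply make explicit what the paper leaves implicit, so the approaches coincide.
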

\begin{proof}
The statement follows from the definitions of $\overline{d}$ and $\langle,\rangle$.
\end{proof}

Therefore, when integrating the odd double we integrate over $\mathcal{L}_{od,s}$ and against the `Gaussian measure' corresponding to the non-degenerate quadratic form $\sigma_{od}$.

\section{Minimal models of unimodular L-infinity structures}\label{sec_minimal_unimodular}

We will now look at how integrating allows us to transfer a unimodular \Linf-algebra structure on a general super vector space $V$ to an equivalent unimodular \Linf-algebra structure on its homology $\homology (V)$ (its minimal model).

\begin{definition}
Let $V$ be a super vector space with a split quasi-isomorphism $\homology (V) \to V$. Let \[
\iota_{\mathfrak{g}} \colon \mathfrak{g}[\Pi \homology (V)] \to \mathfrak{g}[\Pi V]
\]
be the induced filtered quasi-isomorphism of dglas. Given a unimodular \Linf-algebra structure $(m,f)$ on $V$, the minimal model of $(m,f)$ is a unimodular \Linf-algebra structure $(m^\prime,f^\prime)$ on $\homology (V)$ such that $\iota_{\mathfrak{g}} (m^\prime ,f^\prime)$ and $(m,f)$ are equivalent unimodular \Linf-algebra structures on $V$.
\end{definition}

\begin{remark}
The above definition in the context of \Linf-algebras is equivalent to the usual definition of the minimal model. That is, the minimal of an \Linf-algebra $(V,m)$ is a \Linf-algebra $(\homology (V),m^\prime)$ that is \Linf-quasi-isomorphic to $V$.
\end{remark}

The existence and uniqueness for minimal models of unimodular \Linf-algebras has already been established in Proposition~\ref{prop_unimodular_structure_bijection}. However, the proof of this proposition is not constructive. The goal of this section is to give an explicit construction for the minimal model of a unimodular \Linf-algebra structure.

\begin{definition}
Let $(m,f)$ be a unimodular \Linf-algebra structure on $V$. Further suppose $V$ comes equipped with a SDR onto its homology $\homology (V)$. Define $T_1$ to be the map $\MC(\mathfrak{g}[\Pi V])\to \MC (\mathfrak{g}[\Pi \homology (V)])$ given by the composition
\[
T_1 (m,f)=\Hodq \circ \rho \circ \Dodq (m,f).
\]
\end{definition}

\begin{theorem}\label{thm_minimal_model_for_unimodular}
Let $V$ be a super vector space with a SDR $(i,p,s)$ onto its homology $\homology (V)$. If $(m,f)$ defines a unimodular \Linf-algebra structure on $V$, then $T_1(m,f)$ defines the minimal model of $(m,f)$.
\end{theorem}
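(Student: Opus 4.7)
My plan is to establish first that $T_1(m,f)$ is a well-defined unimodular \Linf-algebra structure on $\homology(V)$, and second that $\iota_{\mathfrak{g}}(T_1(m,f)) \equiv (m,f)$ by halving the quantum homotopy produced by Theorem~\ref{thm_braun_maunder}.

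For well-definedness, I would first note that by Proposition~\ref{prop_unimodular_to_quantum}, $\Dodq(m,f) = \Dod(m) + \hbar f$ is a quantum \Linf-algebra structure on the odd double of $V$, and by Theorem~\ref{thm_braun_maunder} its integral $\rho(\Dodq(m,f))$ is then a quantum \Linf-algebra structure on the odd double of $\homology(V)$. The key check is that this structure has the special form required for $\Hodq$ to apply: writing $\rho(\Dodq(m,f)) = \sum_{g \geq 0} \hbar^g m'_g$, I need $m'_0 \in \hat{S}\Pi\homology(V)^* \otimes \homology(V)$, $m'_1 \in \hat{S}\Pi\homology(V)^*$, and $m'_g = 0$ for $g \geq 2$. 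This follows from a direct combinatorial analysis of the connected stable graphs appearing in $\rho$: vertices of type $\Dod(m)$ carry exactly one $\Pi\homology(V)$-output leg, $f$-vertices carry none, and each propagator Wick-contracts a $\Pi\homology(V)$-leg with a $\Pi\homology(V)^*$-leg. For a connected graph with $k$ vertices of type $\Dod(m)$, $j$ of type $f$, $e$ edges, and $l$ loops, one has $e = k + j + l - 1$, so the number of external $\Pi\homology(V)$-legs is $1 - l - j \geq 0$, forcing $l+j \leq 1$; accounting for the overall $\hbar$ in $\hbar\log$, the $\hbar$-weight of the graph equals $l+j$. Proposition~\ref{prop_havling_quantum_to_unimodular} then gives $T_1(m,f)$ as a unimodular \Linf-algebra structure on $\homology(V)$.

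For the equivalence, I would exploit the naturality of the doubling construction with respect to the SDR-induced pullbacks, which yields $\iota_{\mathfrak{h}} \circ \Dodq = \Dodq \circ \iota_{\mathfrak{g}}$; since $\rho(\Dodq(m,f))$ already lies in the image of the extended doubling map, Proposition~\ref{prop_H_is_one_sided_inverse_and_when_restricted_is_dgla} gives $\Dodq(T_1(m,f)) = \rho(\Dodq(m,f))$. Chaining these identities yields
\[
\Dodq(\iota_{\mathfrak{g}}(T_1(m,f))) = \iota_{\mathfrak{h}}(\rho(\Dodq(m,f))) \equiv \Dodq(m,f),
\]
with the equivalence coming from the explicit homotopy $h(t)$ produced by Theorem~\ref{thm_braun_maunder}.

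The hard part will be descending this quantum-level homotopy to a homotopy of unimodular \Linf-algebra structures on $V$. Because the homotopy $h(t)$ is itself built from the integration operator of Theorem~\ref{thm_braun_maunder}, a $t,dt$-linear extension of the Feynman-graph count above shows that $h(t)$ takes values in the halvable subspace --- of the form $h_0(t) + \hbar h_1(t)$ with $h_0(t) \in \im(\Dod)$ and $h_1(t) \in \hat{S}\Pi V^*$ --- for every $t$. Applying $\Hodq$ to $h(t)$ $t,dt$-linearly, in the spirit of Proposition~\ref{prop_halving_respects_homotopy} transposed to the quantum setting, then produces the required homotopy of unimodular \Linf-structures on $V$ between $\iota_{\mathfrak{g}}(T_1(m,f))$ and $(m,f)$, completing the proof.
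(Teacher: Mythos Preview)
Your proposal is correct, and for well-definedness you actually supply more detail than the paper, which simply asserts that the constant and linear (in $\hbar$) parts of $\rho(\Dodq(m,f))$ lie in the halvable subalgebra without the graph-counting justification you provide. (Incidentally, the vanishing of $m'_g$ for $g\geq 2$ is stronger than what Proposition~\ref{prop_havling_quantum_to_unimodular} requires, but your argument for it is sound.)

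Where you diverge from the paper is in the equivalence step. The paper does \emph{not} attempt to halve the quantum homotopy produced by Theorem~\ref{thm_braun_maunder}. Instead it argues abstractly: one checks directly that $T_1 \circ \iota_{\mathfrak{g}}$ restricts to the identity on $\MC(\mathfrak{g}[\Pi\homology(V)])$, and since $\iota_{\mathfrak{g}}$ is already known to induce a bijection on MC moduli sets (this is Proposition~\ref{prop_unimodular_structure_bijection}, via filtered quasi-isomorphism), $T_1$ must induce the inverse bijection, so $\iota_{\mathfrak{g}} \circ T_1(m,f)$ is homotopic to $(m,f)$. This two-line argument sidesteps any need to inspect the shape of the homotopy from \cite{braun_maunder}. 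Your route is more constructive and yields an explicit witness to the equivalence, but it depends on verifying that the homotopy constructed in \cite{braun_maunder} stays in the halvable subspace --- a check that is plausible (given that the homotopy is itself built from the same Feynman-graph integration) but is not recorded in the present paper and so imports an extra detail from the external reference.
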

\begin{proof}
First, we know that the unimodular \Linf-algebra structure $(m,f)$ defines a quantum \Linf-algebra structure $\Dodq (m,f)$, by Proposition~\ref{prop_unimodular_to_quantum}. Taking the integral of this structure results in a quantum \Linf-algebra structure $\widetilde{m}_q \in \mathfrak{h}[\Pi (\homology (V)^\ast \oplus \Pi \homology(V))]$ that is equivalent to $\Dodq (m,f)$. Now, denoting the constant part in $\hbar$ of $\widetilde{m}_q$ by $\widetilde{m}$ and the linear part by $\widetilde{f}$. The pair $(X_{\widetilde{m}},\widetilde{f})$ define a unimodular \Linf-algebra structure on $\homology (V)^\ast \oplus \Pi \homology (V)$. Moreover, the pair $(X_{\widetilde{m}},\widetilde{f})$ live in the correct Lie subalgebra so that halving this structure results in a unimodular \Linf-algebra structure on $\homology (V)$, by Proposition~\ref{prop_havling_quantum_to_unimodular}. Therefore, the map $T_1$ is well-defined.

It is straightforward to see that $T_1 \circ \iota_{\mathfrak{g}}$ restricted to the set $\MC(\mathfrak{g}[\Pi \homology (V)])$ is the identity, and so $T_1$ induces the inverse bijection to $\iota_{\mathfrak{g}}$ on MC moduli sets. Therefore $\iota_{\mathfrak{g}} \circ T_1 (m,f)$ is homotopic to $(m,f)$.
\end{proof}

\section{Minimal models of L-infinity structures}\label{sec_minimal_linf}

Similar to the previous section, we will now look at how integrating allows us to transfer an \Linf-algebra structure on a general super vector space $V$ to a \Linf-quasi-isomorphic \Linf-algebra structure on its homology $\homology (V)$. Minimal models of \Linf-algebra structures are already understood, see \cite{kontsevich} for example.

\begin{definition}
Let $(V,m)$ be an \Linf-algebra and suppose there is a SDR of $V$ onto its homology $\homology (V)$. Let $T_2 \colon \MC (\Der_{\geq 2} (\hat{S}\Pi V^\ast) )\to \MC (\Der_{\geq 2} (\hat{S}\Pi \homology (V)^\ast))$ be the map given by the composition
\[
T_2 (m) = \Hev \circ \Hod \circ |_{\hbar= 0} \circ \rho \circ \Dodq \circ \Dev (m).
\]
\end{definition}

\begin{theorem}\label{thm_minimal_model_for_general}
Let $V$ be a super vector space with a SDR $(i,p,s)$ onto its homology $\homology (V)$. If $m$ defines an \Linf-algebra structure on $V$, then $T_2 (m)$ defines the minimal model of $m$.
\end{theorem}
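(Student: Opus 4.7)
The plan parallels the proof of Theorem~\ref{thm_minimal_model_for_unimodular}, proceeding in two stages: first verifying that the composition $T_2$ is well-defined by checking each stage maps into the subspace required by the next, and then verifying that $T_2$ inverts the natural inclusion $\iota\colon \Der_{\geq 2}(\hat{S}\Pi\homology(V)^*)\to \Der_{\geq 2}(\hat{S}\Pi V^*)$ at the level of MC moduli sets (which, by the remark following the definition of minimal model, implies $T_2(m)$ is the minimal model of $m$ in the usual \Linf-quasi-isomorphism sense).

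For well-definedness, the plan is to trace the structure of $T_2(m)$ step by step: Proposition~\ref{prop_even_double_is_unimodular} gives that $(\Dev(m),0)$ is a strictly unimodular \Linf-structure on $V\oplus V^*$; Lemma~\ref{lem_quadruple} gives that $\Dodq(\Dev(m),0)=\Dod(\Dev(m))$ is a quantum \Linf-structure on the quadruple $(V\oplus V^*)\oplus \Pi(V\oplus V^*)^*$; Theorem~\ref{thm_braun_maunder} gives that $\rho\circ\Dodq\circ\Dev(m)$ is a quantum \Linf-structure on the homology of the quadruple; and setting $\hbar=0$ extracts the classical component, which is an \Linf-structure by Remark~\ref{rem_recovering_defs}. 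The essential remaining checks are that this classical part lies in $\im(\Dod)$ and that after applying $\Hod$ the resulting derivation lies in $\im(\Dev)$, so that Proposition~\ref{prop_halving_unimodular} applies twice to produce first an \Linf-structure on the even double of $\homology(V)$ and then an \Linf-structure on $\homology(V)$ itself. Both of these reduce to one structural observation: the Lagrangian $\mathcal{L}_{od,s}$ on which $\rho$ integrates is itself an odd double, and the integrand $\Dod(\Dev(m))$ lies in the image of both $\Dev$ and $\Dod$; expanding $\rho$ as the sum over stable graphs of Theorem~\ref{thm_braun_maunder}, each Wick contraction pairs a coordinate from $V\oplus V^*$ with one from $\Pi(V\oplus V^*)^*$, thereby preserving the ``linear in the antifields'' form characterising $\im(\Dod)$ and, once halved oddly, the ``linear in $V$'' form characterising $\im(\Dev)$.

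For the minimal-model property, the plan is to show that $T_2\circ\iota$ is the identity on $\MC(\Der_{\geq 2}(\hat{S}\Pi\homology(V)^*))$. By Propositions~\ref{prop_dual_sdr} and~\ref{prop_sdr_compatible_with_sums}, the given SDR of $V$ onto $\homology(V)$ induces compatible SDRs of the even double, the odd double, and the full quadruple onto their respective homologies. Starting from $m'\in\MC(\Der_{\geq 2}(\hat{S}\Pi\homology(V)^*))$, $\iota(m')$ is the pullback of $m'$ along the projection $V\to\homology(V)$, and doubling twice produces the pullback of the corresponding doubled structure on the quadruple of $\homology(V)$. The Feynman-diagram expansion of $\rho$ applied to such an input returns exactly that doubled structure, since Wick contractions with any propagator factor supported on the contractible summands vanish by the SDR side conditions; setting $\hbar=0$ and halving twice then recovers $m'$, giving $T_2(\iota(m'))=m'$. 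Hence $T_2$ is inverse to $\iota$ on MC moduli sets, so $\iota(T_2(m))$ is equivalent to $m$, and $T_2(m)$ is the minimal model. The hardest step will be the Feynman-diagram verification in the second paragraph showing that $\rho$ preserves the images of the doubling maps; this is where the geometric ``integrating doubles'' setup of Section~\ref{sec_BV} does the real work, and everything else reduces to book-keeping from results already established.
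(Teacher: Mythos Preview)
Your proposal is correct and follows essentially the same two-stage strategy as the paper: first verify well-definedness by tracing the composition through Proposition~\ref{prop_even_double_is_unimodular}, Lemma~\ref{lem_quadruple}, and Theorem~\ref{thm_braun_maunder}, then show $T_2\circ\iota$ is the identity on MC elements so that $\iota$ being a filtered quasi-isomorphism forces $\iota(T_2(m))$ to be homotopic to $m$. The paper's proof is terser than yours at exactly the two places you flag as requiring work: it simply asserts that the result of $\rho$ ``lives in the correct Lie subalgebra to allow for halving'' and that $T_2\circ\iota$ restricts to the identity, without giving the Wick-contraction/stable-graph argument you sketch (that discussion is instead deferred to the graphical calculus of Section~\ref{sec_HTT}).
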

\begin{proof}
Proposition~\ref{prop_even_double_is_unimodular} shows that $\Dev (m)$ is strictly unimodular \Linf-algebra structure and so $\Dodq \circ \Dev (m)$ is a well-defined quantum \Linf-algebra structure. Therefore, $\rho (\Dodq \circ \Dev (m))$ defines a quantum \Linf-algebra equivalent to $\Dodq \circ \Dev (m)$ and setting $\hbar=0$ defines an \Linf-algebra structure on the `quadrupled' $\homology (V)$. Moreover, this \Linf-algebra lives in the correct Lie subalgebra to allow for halving. Halving twice, we obtain the \Linf-algebra $T_2 (m)$ on $\homology (V)$, meaning the map is well-defined.

The SDR induces a filtered quasi-isomorphism
\[
\iota \colon \Der_{\geq 2} \left(\hat{S}\Pi \homology (V)^\ast \right) \to \Der_{\geq 2} \left(\hat{S}\Pi V^\ast \right)
\]
(and hence a bijection of MC moduli sets). Moreover, the restriction of $T_2 \circ \iota$ to $\MC (\Der_{\geq 2} (\hat{S}\Pi \homology (V)^\ast))$ is the identity morphism. Therefore, the map $\iota \circ T_2 (m)$ is homotopic to $m$.
\end{proof}

\section{Homotopy transfer theorem}\label{sec_HTT}

Adding a slight extension to Theorem~\ref{thm_minimal_model_for_general} and changing the formulation slightly, we recover the Homotopy Transfer Theorem for \Linf-algebras (see \cite{loday_vallette,vallette_alg_htpy_operad}, for example). In the following sections, we will explain how the sums over Feynman diagrams arising in the BV-formalism are related to the sums over rooted trees in the Homotopy Transfer Theorem (HTT).

We note that we are unable to state a similar theorem for unimodular or quantum \Linf-algebra structures, because there is no notion of unimodular or quantum \Linf-morphism, let alone quasi-isomorphism.

\subsection{A graphical calculus}

In order to display how $T_2$ recovers the HTT by recovering the known formulas for transferred structures, i.e.~those in terms of trees, we must first look at how doubling and halving act on graphs. In the process of doing this we make use of some parts of a graphical calculus for symmetric monoidal categories. The graphical calculus used in this paper is close to those commonly used in the literature (see \cite{fiorenza_murri,selinger_graphical_languages} for example).

Objects of the category are thought of as directed `strings' or `edges' and morphisms of the category as `nodes' or `vertices' of the diagrams. A given diagram will be read from top to bottom with the inputs all along a horizontal line at the very top of the diagram and the outputs placed all along a horizontal at the very bottom of the diagram; for example, a morphism $f\colon U\otimes V\to W$ is represented by the decorated directed graph
\begin{center}
\begin{tikzpicture}
\node (in1) at (-0.5,2) {$V$};
\node (in2) at (0.5,2) {$U$};
\node[draw] (f) at (0,1) {$f$};
\node (out) at (0,0) {$W$};

\draw[->] (in1) -- (f);
\draw[->] (in2) -- (f);
\draw[->] (f) -- (out);
\end{tikzpicture}
\end{center}
The category of super vector spaces is symmetric monoidal, and, therefore, the order of the input edges is unimportant (as oppose to the braided case, for instance, where it is necessary to make note of under and over crossings). To represent the dual of an object, the orientation of an edge is reversed. The absence of a string represents the unit of the monoidal product, i.e.~the ground field. Therefore, a diagram with no inputs (resp.~no outputs) represents a function with domain (resp.~codomain) given by the ground field.

A general diagram is usually made up of compositions of ones like the one above, composed in the obvious way to make a larger directed graph.

\subsection{Doubling and halving in the graphical calculus}

The known formulas for the HTT make use of \Linf-algebra structures on $V$ given in the form of a collection of linear maps $(\Pi V)^{\otimes n} \to \Pi V$. Thus, we will first re-imagine the action of $T_2$ (and more specifically the doubling and halving maps) in this guise.

We will first look at the action of the doubling and halving on flowers. The even double takes a symmetric multi-linear map
\[
m_n \colon (\Pi V)^{\otimes n} \to \Pi V
\]
to an element of $(\Pi V^*)^{\otimes n} \otimes \Pi V$ which can be thought of as a map
\[
\Dev (m_n) \colon k \to (\Pi V^*)^{\otimes n} \otimes \Pi V \left(\subset \hat{S}\Pi V^\ast \otimes \Pi V \subset \hat{S}\Pi (V \otimes V)^\ast \right).
\]
On a symmetric multi-linear map, the even double acts via the tensor-hom adjunction. Pictorially, all the even double is doing is `flipping' the $n$ inputs:
\begin{center}
\begin{tikzpicture}
\node (out1) at (0,0) {$\Pi V$};
\node[draw] (fn1) at (0,1) {$m_n$};

\node (in1) at (-1,2) {$\Pi V$};
\node (in2) at (-0.4,2) {$\Pi V$};
\node at (0.4,2) {$\cdots$};
\node (in3) at (1,2) {$\Pi V$};

\draw[->] (in1) -- (fn1);
\draw[->] (in2) -- (fn1);
\draw[->] (in3) -- (fn1);
\draw[->] (0.2,1.75) -- (fn1);
\draw[->] (0.5,1.75) -- (fn1);
\draw[->] (fn1) -- (out1);

\node at (2.5,1) {$\mapsto$};

\node (out2) at (7.5,0) {$\Pi V$};
\node[draw] (fn2) at (5.5,2) {$\Dev(m_n)$};

\node (in4) at (3.5,0) {$\Pi V^\ast$};
\node (in5) at (4.5,0) {$\Pi V^\ast$};
\node at (5.5,0) {$\cdots$};
\node (in6) at (6.5,0) {$\Pi V^\ast$};

\draw[<-] (in4) -- (fn2);
\draw[<-] (in5) -- (fn2);
\draw[<-] (in6) -- (fn2);
\draw[<-] (5.2,0.25) -- (fn2);
\draw[<-] (5.8,0.25) -- (fn2);
\draw[->] (fn2) -- (out2);
\end{tikzpicture}
\end{center}
The odd double applied to a multi-linear map is similar, only there is a parity change of the output.

To describe the action of the odd double of the above $\Dev (m_n)$, assume that $\Dev (m_n)=f_1 f_2 \cdots f_n v\in \hat{S}\Pi V^\ast \otimes \Pi V$, where $f_1,f_2,\cdots, f_n \in \Pi V^\ast$ and $v\in \Pi V$. Moreover, let $W=V\oplus V^\ast$ to ease notation. After becoming a derivation and doubling, $\Dev(m_n)$ becomes the sum of $n+1$ terms:
\[
(\pm ) f_1 f_2 \cdots f_n \xi_v + (\pm) f_1 f_2 \cdots f_{n-1} \xi_{f_n} v + \cdots + (\pm) \xi_{f_1} f_2 \cdots f_n v \in \hat{S}\Pi W^\ast \otimes W,
\]
where $\xi_x\in \Pi W$ is the dual element of $x\in W$. So, taking the odd double of the even double takes a single flower and converts it to a sum represented by the flower:
\begin{center}
\begin{tikzpicture}
\node (out2) at (1.5,0) {$\Pi V$};
\node[draw] (fn2) at (-0.5,1) {$\Dev(m_n)$};

\node (in4) at (-2.5,0) {$\Pi V^\ast$};
\node (in5) at (-1.5,0) {$\Pi V^\ast$};
\node at (-0.5,0) {$\cdots$};
\node (in6) at (0.5,0) {$\Pi V^\ast$};

\draw[<-] (in4) -- (fn2);
\draw[<-] (in5) -- (fn2);
\draw[<-] (in6) -- (fn2);
\draw[<-] (-0.8,0.25) -- (fn2);
\draw[<-] (-0.2,0.25) -- (fn2);
\draw[->] (fn2) -- (out2);

\node at (2.5,0.5) {$\mapsto$};

\node[draw] (fn3) at (5.5,1) {$\Dod\circ\Dev(m_n)$};

\node (in7) at (3.5,0) {$\Pi W^\ast$};
\node (in8) at (4.5,0) {$\Pi W^\ast$};
\node at (5.5,0) {$\cdots$};
\node (in10) at (6.5,0) {$\Pi W^\ast$};
\node (in11) at (7.5,0) {$W$};

\draw[<-] (in7) -- (fn3);
\draw[<-] (in8) -- (fn3);
\draw[<-] (5.2,0.25) -- (fn3);
\draw[<-] (5.8,0.25) -- (fn3);
\draw[<-] (in10) -- (fn3);
\draw[<-] (in11) -- (fn3);
\end{tikzpicture}
\end{center}
The halving maps are now the obvious reverse of the above.

We calculate the amplitudes $F(G)$ of Feynman diagrams by contracting edges using the the propagator $?\mapsto \langle ?, s(?)\rangle=\langle,\rangle \circ (\id\otimes s)$. After halving, the propagator becomes $s_W \colon W \to W$. For suitable graphs, we have
\begin{center}
\begin{tikzpicture}
\node[draw] (prop) at (0,1) {$\sigma^{-1}$};
\node (out1) at (-0.5,2) {$W$};
\node (out2) at (0.5,2) {$\Pi W^\ast$};

\draw[<-] (prop) -- (out1);
\draw[<-] (prop) -- (out2);

\node at (1.5,1) {$\mapsto$};

\node[draw] (Hprop) at (3.5,1) {$\Hod(\sigma^{-1})=s_W$};
\node (in1) at (3.5,2) {$W$};
\node (out3) at (3.5,0) {$W$};

\draw[->] (in1) -- (Hprop);
\draw[->] (Hprop) -- (out3);
\end{tikzpicture}
\end{center}
Note that for elements of $W^{\otimes 2}$ the contraction is zero --- likewise, for $(\Pi W^\ast)^{\otimes 2}$.

Let us now look at the action of the map $T_2$ on those graphs that arise when integrating solutions to the QME. Since the map $T_2$ sets $\hbar$ to zero, the only graphs that remain are rooted trees with at least tri-valent vertices. The tree-level truncation of the Feynman diagram expansion of the integral
\[
\hbar \log \left( \int_{L_1} e^{\frac{1}{\hbar}\Dodq \circ \Dev (m)} e^{-\frac{\sigma}{2\hbar}} \right)
\]
consists of trees with vertices that are at least tri-valent and decorated by the components of
\[
\Dodq \circ \Dev (m)|_{\homology (V) \oplus \Pi \homology (V)^\ast}.
\]

\begin{proposition}
For a connected at least tri-valent graph, $\Hod \circ F (G)$ is calculated by placing $i$ at the ends of the `incoming legs', $p$ at the `outgoing legs', and $s$ at any internal edge. \qed
\end{proposition}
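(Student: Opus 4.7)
The plan is to decompose the amplitude $F(G)$ of a connected at-least-tri-valent tree $G$ into its three basic pieces -- vertex decorations, internal-edge propagators, and external half-edges -- and then apply $\Hod$ piece-by-piece using the graphical calculus established in the preceding subsection. Concretely, $F(G)$ is the tensor contraction obtained by placing at each vertex $v$ of valence $n+1$ the appropriate component of $\Dodq\circ\Dev (m)|_{\homology(V)\oplus \Pi \homology(V)^\ast}$ and by contracting each internal edge by the inverse of the odd symplectic form $\sigma^{-1}$, i.e.\ by the operator built from the odd pairing and $\overline{s}$ described in Section~\ref{sec_BV}.

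I would then apply $\Hod$ to each piece. By Proposition~\ref{prop_H_is_one_sided_inverse_and_when_restricted_is_dgla}, the odd halving inverts the odd doubling on its image, so each vertex flower $\Dod\circ\Dev(m_n)$ is returned to its pre-odd-doubled form $\Dev(m_n)$; the subsequent $\Hev$ would then give the multilinear map $m_n$, but at the present step the vertices simply become the even-doubled flowers. For an internal edge, the diagrammatic computation at the end of the preceding subsection gives $\Hod(\sigma^{-1}) = s_W$, which is the SDR-homotopy on the even-doubled space $W = V\oplus V^\ast$ supplied by Proposition~\ref{prop_sdr_compatible_with_sums}.

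The treatment of the external half-edges is the only non-routine step, and is where the letters $i$ and $p$ appear. Because the vertex decorations were pre-restricted to the homology subspace $\homology(V)\oplus \Pi\homology(V)^\ast$, every external incoming half-edge of a vertex carries data that has been constrained to $\homology(W)$; comparing with the full even-doubled multilinear map one sees that this constraint is implemented by pre-composing with the SDR inclusion $i$. Dually, the unique outgoing half-edge (the root) is implemented by post-composition with the projection $p$. The main obstacle, as always in BV-type calculations, will be the sign- and parity-bookkeeping arising from the two parity reversals hidden in $\Dodq\circ\Dev$ and from the $S_n$-coinvariants/invariants identification used to pass between a vertex decoration and the corresponding symmetric multilinear map; once the signs are settled, reassembling the pieces yields exactly the prescription of placing $i$ on incoming legs, $p$ on the outgoing leg, and $s$ on every internal edge.
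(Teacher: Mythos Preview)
Your proposal is correct and follows essentially the same route as the paper: the paper treats this proposition as an immediate consequence of the graphical calculus developed in the preceding paragraphs (hence the bare \qed), and what you have written simply unpacks that calculus step by step --- vertices via Proposition~\ref{prop_H_is_one_sided_inverse_and_when_restricted_is_dgla}, internal edges via the displayed identity $\Hod(\sigma^{-1})=s_W$, and external legs via the restriction to homology. One small remark: the statement says ``graph'' rather than ``tree'', so you need not assume a unique outgoing leg at this stage (the restriction to trees and to a single root only enters after setting $\hbar=0$ and applying $\Hev$ in the next proposition).
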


\begin{proposition}
For a connected at least tri-valent graph, $\Hev\circ \Hod \circ F (G)$ is calculated by placing $i$ at the ends of the `incoming legs', $p$ at the `outgoing legs', and $s$ at any internal edge. \qed
\end{proposition}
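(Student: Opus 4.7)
The plan is to deduce the statement directly from the preceding proposition together with the graphical dictionary for doubling and halving developed in this section. By the preceding proposition, $\Hod\circ F(G)$ may already be written as a sum over trees whose vertices are decorated by the multilinear components of $\Dev(m)$ on $V\oplus V^\ast$ and whose internal edges and external legs carry the SDR data on $V\oplus V^\ast$. By Propositions~\ref{prop_dual_sdr} and \ref{prop_sdr_compatible_with_sums}, this SDR is the direct sum of $(i,p,s)$ on $V$ with the dualised SDR $(\Pi p^\ast,\Pi i^\ast,\Pi s^\ast)$ on $\Pi V^\ast$; in particular, the propagator splits as $s\oplus \Pi s^\ast$ and the boundary maps split analogously.

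Next, I would apply $\Hev$ vertex-by-vertex using the graphical interpretation of $\Dev$ recalled just before the proposition. Proposition~\ref{prop_H_is_one_sided_inverse_and_when_restricted_is_dgla} gives $\Hev\circ\Dev=\id$ on the image, and via the tensor-hom adjunction depicted in the flower diagrams, every $\Dev(m_n)$-decorated vertex is replaced by the original vertex $m_n\colon(\Pi V)^{\otimes n}\to\Pi V$. Since $\Hev$ annihilates any summand outside $\hat{S}V^\ast\otimes V$, only the $V$-components of the incident half-edges survive: leg contractions with $i\oplus \Pi p^\ast$ and $p\oplus \Pi i^\ast$ collapse to $i$ and $p$, and on every internal edge the propagator $s\oplus \Pi s^\ast$ retains only its $s$-component. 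Summing over the surviving connected, at-least-trivalent rooted trees then gives exactly the description claimed.

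The main obstacle I anticipate is the combinatorial bookkeeping of signs and multiplicities. The odd double writes $\Dod\circ\Dev(m_n)$ as a sum of $n+1$ parity-shifted terms (one for each distinguished slot, as displayed in the flower expansion preceding the proposition), and one must check that after contracting internal edges along a tree and applying $\Hev$ these terms reassemble into a single $m_n$-labelled vertex with the correct sign and without overcounting. The cleanest route seems to be an induction on the number of internal edges which exploits that $\Dev$ and $\Dod$ are (odd) Lie algebra morphisms, so that the derivation rule imposed along each contracted edge is automatically compatible with the vertex structure on either side.
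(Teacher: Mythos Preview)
Your approach is correct and is precisely the argument the paper leaves implicit: the proposition is stated with a bare \qed\ and no proof, so the intended justification is exactly what you wrote --- invoke the preceding proposition to handle $\Hod\circ F(G)$ with the SDR data on $W=V\oplus V^\ast$, then apply $\Hev$ vertex-by-vertex using $\Hev\circ\Dev=\id$ and the splitting of the SDR on $W$ into its $V$- and $V^\ast$-components via Propositions~\ref{prop_dual_sdr} and~\ref{prop_sdr_compatible_with_sums}.

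Your caution about sign and multiplicity bookkeeping is well placed but goes beyond what the paper attempts; the author treats this as routine and does not track it. The inductive strategy you sketch (on the number of internal edges, using that the doubling maps are Lie algebra morphisms) is a reasonable way to make the signs honest, though for the purposes of matching the paper you need not carry it out.
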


For example, the tree of Figure~\ref{fig_example_tree} can be used to calculate the Feynman amplitude of the corresponding graph, also in Figure~\ref{fig_example_tree}.

\begin{figure}
\centering
\begin{tikzpicture}[grow=up,scale=0.8,level 1/.style={sibling distance=30mm},level 2/.style={sibling distance=20mm},level 3/.style={sibling distance=8mm}]
\node {$p$}
child {
	node[draw] {$m_2$}
	child {
		node[draw] {$m_3$}        
			child {
				node[circle] {$i$}
				edge from parent
			}
			child {
				node[circle] {$i$}
				edge from parent
			}
			child {
				node[circle] {$i$}
				edge from parent
			}
			edge from parent 
			node[right] {$s$}
	}
    child {
		node[draw] {$m_3$}        
			child {
				node[circle] {$i$}
				edge from parent
			}
			child {
				node[circle] {$i$}
				edge from parent
			}
			child {
				node[draw] {$m_2$}
					child {
						node[circle] {$i$}
						edge from parent
						}
					child {
						node[circle] {$i$}
						edge from parent
						}
				edge from parent
				node[left,yshift=-0.1em] {$s$}
			}
			edge from parent 
			node[left] {$s$}
	}
edge from parent 
};
\end{tikzpicture}

\vspace*{1em}

\begin{tikzpicture}[scale=0.7, every node/.style={scale=0.7}]
\node[draw] (m21) at (0,0) {$\Dodq \circ \Dev (m_2)$};
\node[draw] (m31) at (3,0) {$\Dodq \circ \Dev (m_3)$};
\node[draw] (m22) at (6,0) {$\Dodq \circ \Dev (m_2)$};
\node[draw] (m32) at (9,0) {$\Dodq \circ \Dev (m_3)$};

\node[draw] (prop1) at (1.5,-1) {$\sigma^{-1}$};
\node[draw] (prop2) at (4.5,-1) {$\sigma^{-1}$};
\node[draw] (prop3) at (7.5,-1) {$\sigma^{-1}$};

\node (in1) at (-1.5,-1) {$\Pi H^\ast$};
\node (in2) at (0,-1) {$\Pi H^\ast$};

\node (in3) at (2.5,-1) {$\Pi H^\ast$};
\node (in4) at (3.5,-1) {$\Pi H^\ast$};

\node (in5) at (6,-1) {$H$};

\node (in6) at (8.5,-1) {$\Pi H^\ast$};
\node (in7) at (9.5,-1) {$\Pi H^\ast$};
\node (in8) at (10.5,-1) {$\Pi H^\ast$};

\draw[->] (m21) -- (in1);
\draw[->] (m21) -- (in2);
\draw[->] (m21) -- (prop1);

\draw[->] (m31) -- (prop1);
\draw[->] (m31) -- (in3);
\draw[->] (m31) -- (in4);
\draw[->] (m31) -- (prop2);

\draw[->] (m22) -- (prop2);
\draw[->] (m22) -- (in5);
\draw[->] (m22) -- (prop3);

\draw[->] (m32) -- (prop3);
\draw[->] (m32) -- (in6);
\draw[->] (m32) -- (in7);
\draw[->] (m32) -- (in8);
\end{tikzpicture}
\caption{An example of a tree (top) that can be used to calculate the Feynman amplitude of the associated graph (bottom). For brevity, $H=\homology (V)$ in the bottom diagram.}
\label{fig_example_tree}
\end{figure}

All told, restating Theorem~\ref{thm_minimal_model_for_general} using the above graphical calculus we arrive at the following familiar presentation of the HTT for \Linf-algebras.

\begin{theorem}\label{thm_HTT_via_multi-linear}
Let $\lbrace m_n \colon (\Pi V)^{\otimes n} \to \Pi V \rbrace_{n\geq 2}$ be an \Linf-algebra structure on a super vector space $V$, then a SDR $(i,p,s)$ of $V$ onto its homology $\homology (V)$ induces the \Linf-algebra structure $\lbrace l_n \colon (\Pi \homology (V))^{\otimes n} \to \Pi \homology (V) \rbrace_{n\geq 2}$ on $\homology (V)$ given by
\begin{center}
\begin{tikzpicture}
\node (out1) at (1,0) {};
\node[draw] (fn1) at (1,1) {$l_n$};

\node (in1) at (0,2) {$1$};
\node (in2) at (0.5,2) {$2$};
\node (in3) at (1,1.9) {};
\node at (1.25,2) {$\cdots$};
\node (in4) at (1.5,1.9) {};
\node (in5) at (2,2) {$n$};

\draw[-] (in1) -- (fn1);
\draw[-] (in2) -- (fn1);
\draw[-] (in3) -- (fn1);
\draw[-] (in4) -- (fn1);
\draw[-] (in5) -- (fn1);
\draw[-] (fn1) -- (out1);

\node at (2.5,1) {$=$};

\node at (3.5,1) {$\displaystyle\sum_{\operatorname{Tree}_n} \pm$};

\node (out2) at (5.5,0) {};
\node[circle,fill,scale=0.1] (m1) at (4.4,1.5) {};
\node[circle,fill,scale=0.1] (m2) at (5,1) {};
\node[circle,fill,scale=0.1] (m3) at (5.5,0.5) {};
\node[circle,fill,scale=0.1] (m4) at (6,1) {};

\node (in10) at (4,2) {};
\node (in11) at (4.2,2) {};
\node (in12) at (4.4,2) {};
\node (in13) at (4.6,2) {};
\node (in14) at (4.8,2) {};

\node (in15) at (5,1.5) {};
\node (in16) at (5.4,1.5) {};

\node (in17) at (5.7,1.5) {};
\node (in18) at (5.9,1.5) {};
\node (in19) at (6.1,1.5) {};
\node (in20) at (6.3,1.5) {};

\draw[-] (in10) -- (m1);
\draw[-] (in11) -- (m1);
\draw[-] (in12) -- (m1);
\draw[-] (in13) -- (m1);
\draw[-] (in14) -- (m1);

\draw[-] (m1) -- (m2);
\draw[-] (in15) -- (m2);
\draw[-] (in16) -- (m2);

\draw[-] (in17) -- (m4);
\draw[-] (in18) -- (m4);
\draw[-] (in19) -- (m4);
\draw[-] (in20) -- (m4);

\draw[-] (m2) -- (m3);
\draw[-] (m4) -- (m3);
\draw[-] (m3) -- (out2);
\end{tikzpicture}
\end{center}
where $\operatorname{Tree}_n$ denotes the set of rooted trees with $n$ leaves and the labelling of the tree on the right hand side is given as follows: the inputs edges are labelled by $i$, the internal edges are labelled by $s$, the output edge are labelled by $p$, and the vertices are labelled by the operations $\lbrace m_n \rbrace_{n\geq 2}$ of the correct arity.

Moreover, the quasi-isomorphism $i$ induces an \Linf-quasi-isomorphism of the above \Linf-algebra structures. \qed
\end{theorem}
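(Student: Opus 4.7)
My plan is to combine Theorem~\ref{thm_minimal_model_for_general} with the graphical calculus established above. Writing out the composition
\[
T_2 = \Hev \circ \Hod \circ |_{\hbar=0} \circ \rho \circ \Dodq \circ \Dev
\]
stage by stage, I would use Theorem~\ref{thm_braun_maunder} to replace $\rho$ by its Feynman-sum presentation $\sum_{G} F(G)/|\Aut(G)|$ over connected stable graphs, with vertices decorated by components of $\Dodq \circ \Dev(m)$ and internal edges contracted via the odd-symmetric propagator on $\mathcal{L}_{od,s}$.

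The restriction $|_{\hbar=0}$ then discards every graph of positive genus, leaving only trees, and these trees are automatically at least trivalent because $m \in \Der_{\geq 2}(\hat{S}\Pi V^\ast)$. Moreover, the single distinguished output slot per vertex coming from doubling (where the vertex lies in $\hat{S}\Pi W^\ast \otimes W$ for $W = V \oplus V^\ast$) selects a root for each tree, so we obtain a sum over rooted, at-least-trivalent trees. The two propositions preceding the theorem now specify exactly how $\Hev \circ \Hod$ acts on each such tree: they strip away the auxiliary $V^\ast$ and $\Pi V^\ast$ strands introduced by doubling, depositing $s$ at every internal edge (because the halved propagator is precisely $\overline{s}$), $i$ at each input leg (since external inputs of $\rho$ are restricted to $\homology(V) \oplus \Pi \homology(V)^\ast$), $p$ at the root (from the same restriction), and leaves the underlying vertex operation $m_n$ intact as a map $(\Pi V)^{\otimes n} \to \Pi V$.

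The remaining work is combinatorial: one must check that the Feynman weights $1/|\Aut(G)|$ coming from Theorem~\ref{thm_braun_maunder} reorganise into the unweighted sum over elements of $\operatorname{Tree}_n$ by trading summation over isomorphism classes weighted by $|\Aut(G)|^{-1}$ for summation over labelled-leaf trees, and that the Koszul signs contributed by $\Dev$, $\Dod$, $\Hev$, $\Hod$ and the anti-bracket contractions match the standard HTT signs of \cite{loday_vallette,vallette_alg_htpy_operad}. This sign- and symmetry-factor reconciliation is the main obstacle; it is essentially a direct but delicate calculation converting the $n$-input, one-output flower picture into the multi-linear maps $l_n$.

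Finally, the claim that $i$ upgrades to an \Linf-quasi-isomorphism is immediate from Theorem~\ref{thm_minimal_model_for_general}: the element $\iota \circ T_2(m)$ is gauge equivalent to $m$ in $\Der_{\geq 2}(\hat{S}\Pi V^\ast)$, and any such gauge equivalence of MC elements is realised by a pointed \Linf-isomorphism of the representing cdgas (as recalled after the definition of pointed \Linf-morphism in Section~\ref{sec_linf_structres}); composing with $\iota$ produces an \Linf-morphism whose linear term is $i$ and which is therefore an \Linf-quasi-isomorphism.
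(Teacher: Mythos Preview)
Your proposal is correct and follows essentially the same route as the paper. In fact, the paper presents Theorem~\ref{thm_HTT_via_multi-linear} with a bare \qed, regarding it as an immediate restatement of Theorem~\ref{thm_minimal_model_for_general} via the graphical calculus developed just before it; your write-up simply makes explicit the steps the paper leaves implicit (the $\hbar=0$ truncation to trees, the identification of the halved propagator with $s$, and the combinatorial reconciliation of $1/|\Aut(G)|$ weights with the sum over $\operatorname{Tree}_n$), and your derivation of the \Linf-quasi-isomorphism from the gauge equivalence in Theorem~\ref{thm_minimal_model_for_general} is likewise a faithful expansion of what the paper asserts without argument.
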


As mentioned previously, there seems to be no good notion of a unimodular or quantum \Linf-algebra ((quasi)-iso)morphism and so the constructions of minimal models in those cases are not an instance of the HTT (because it is unclear what it would mean for the morphism $i$ to extend to an $\infty$-morphism in this case). However, it would be nice to find a more general operad for which the BV-formalism is homotopy transfer that makes full use of the `quantum' structure available.

\section*{Acknowledgements}

I would like to thank Andrey Lazarev for helpful discussions. I would also like to thank Christopher Braun, whose collaboration preceded this work. 

\bibliographystyle{plain}
\bibliography{my_bib}

\end{document}